\title{The Generality of a Section of a Curve} 
\begin{document}
\maketitle

\begin{abstract}
Let $f \colon C \to \pp^3$ be a general curve of genus~$g$,
mapped to $\pp^3$ via a general linear series of degree~$d$;
and let $Q$ be a general (and thus smooth) quadric.
In this paper, we show that the points of intersection $f(C) \cap Q$
give a \emph{general} collection of $2d$ points on $Q$,
except for exactly six exceptional cases.  

We also prove similar theorems for every other pair $(r, n)$
for which, except for only finitely many pairs $(d, g)$,
the intersection of a general curve of genus~$g$ mapped to $\pp^r$
via a general linear series of degree~$d$, with a general hypersurface $S$
of degree $n$, is a general collection of $dn$ points on $S$.

As explained in \cite{over}, these results play a key role in the author's proof
of the Maximal Rank Conjecture \cite{mrc}.
\end{abstract}

\section{Introduction}

If $C$ is a general curve of genus $g$, equipped with a general map
$f \colon C \to \pp^3$ of degree $d$,
it is natural to ask
whether the intersection $f(C) \cap Q$
of its image with a general quadric $Q$
is a general collection of $2d$ points on $Q$.
Interest in this question historically developed as a result of the 
work of Hirschowitz \cite{mrat} on the Maximal Rank Conjecture
for rational space curves, and the later extension of Ballico
and Ellia \cite{ball} of this method to nonspecial space curves: The
heart of these arguments revolve precisely around understanding the intersection
of a general curve with a general quadric.
In hopes of both simplifying and extending these results,
Ellingsrud and Hirschowitz \cite{eh}, and later Perrin \cite{perrin},
using the technique of liaison,
gave partial results on the generality of this intersection.
However, a complete analysis has so far remained conjectural.

The present paper gives the first complete analysis.
Both the results and the techniques developed here play a critical role
in the author's proof of the Maximal Rank Conjecture \cite{mrc}, as
explained in \cite{over}.

\medskip

\noindent
To state the problem precisely, we make the following definition:

\begin{defi}
We say a stable map $f \colon C \to \pp^r$ from a curve $C$ to $\pp^r$
(with $r \geq 2$)
is a \emph{Weak Brill--Noether curve (WBN-curve)} if it
corresponds to a point in a component of
$\bar{M}_g(\pp^r, d)$ which both
dominates $\bar{M}_g$,
and whose generic member is a map
from a smooth curve, which is an immersion if $r \geq 3$,
and birational onto its image if $r = 2$;
and which is either
nonspecial or nondegenerate.
In the latter case, we refer to it as a \emph{Brill--Noether curve} (\emph{BN-curve}).
\end{defi}

\noindent
The celebrated Brill--Noether theorem
then asserts that BN-curves of degree~$d$ and genus~$g$ to~$\pp^r$ exist if and only if
\[\rho(d, g, r) := (r + 1)d - rg - r(r + 1) \geq 0.\]
Moreover, for $\rho(d, g, r) \geq 0$, the parameter space
of BN-curves is irreducible. (In particular, it makes sense
to talk about a ``general BN-curve''.)

\medskip

In this paper, we give a complete answer to the question posed above:
For $f \colon C \to \pp^3$
a general BN-curve of degree $d$ and genus $g$
(with, of course, $\rho(d, g, 3) \geq 0$),
we show the intersection $f(C) \cap Q$ is a general collection of $2d$ points on $Q$
except in exactly six cases. Furthermore, in these six cases, we compute precisely
what the intersection is.

A natural generalization of this problem is to study the intersection of
a general BN-curve $f \colon C \to \pp^r$ (for $r \geq 2$) with a hypersurface $H$
of degree $n \geq 1$: In particular, we ask when this intersection consists
of a general collection of $dn$ points on $H$ (in all but finitely many cases).

For $r = 2$, the divisor $f(C) \cap H$ on $H$ is linearly equivalent
to $\oo_H(d)$; in particular, it can only be general if $H$ is rational, i.e.\ if $n = 1$ or $n = 2$.
In general, we note that
in order for the intersection to be general, it is evidently necessary for
\[(r + 1)d - (r - 3)g \sim (r + 1)d - (r - 3)(g - 1) = \dim \bar{M}_g(\pp^r, d)^\circ \geq (r - 1) \cdot dn.\]
(Here $\bar{M}_g(\pp^r, d)^\circ$ denotes the component of $\bar{M}_g(\pp^r, d)$
corresponding to the BN-curves, and $A \sim B$ denotes that $A$ differs from $B$ by a quantity bounded by
a function of $r$ alone.)
If the genus of $C$ is as large as possible (subject to the constraint
that $\rho(d, g, r) \geq 0$), i.e.\ if
\[g \sim \frac{r + 1}{r} \cdot d,\]
then the intersection can only be general when
\[(r + 1) \cdot d - (r - 3) \cdot \left(\frac{r + 1}{r} \cdot d \right) \gtrsim (r - 1) n \cdot d;\]
or equivalently if
\[(r + 1) - (r - 3) \cdot \frac{r + 1}{r} \geq (r - 1) n \quad \Leftrightarrow \quad n \leq \frac{3r + 3}{r^2 - r}.\]

For $r = 3$, this implies $n = 1$ or $n = 2$; for $r = 4$, this implies $n = 1$; and
for $r \geq 5$, this is impossible.

\medskip

To summarize, there are only
five pairs $(r, n)$ where this intersection could be, with the exception of finitely many
$(d, g)$ pairs,
a collection of $dn$ general points on $H$: The intersection of a plane curve with a line,
the intersection of a plane curve with a conic, the intersection of a space curve with a quadric,
the intersection of a space curve with a plane, and the intersection of a curve to $\pp^4$
with a hyperplane.
Our three main theorems (five counting the first two cases which are trivial)
give a complete description of this intersection
in these cases:

\begin{thm} \label{main-2}
Let $f \colon C \to \pp^2$ be a general BN-curve of degree~$d$ and genus~$g$. Then
the intersection $f(C) \cap Q$, of $C$ with a general conic $Q$, consists
of a general collection of $2d$ points on~$Q$.
\end{thm}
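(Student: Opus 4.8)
The plan is to reformulate the statement in terms of Severi varieties and then reduce it to a classical degeneration. A general BN-curve $f \colon C \to \pp^2$ of degree $d$ and genus $g$ has as image a general $\delta$-nodal plane curve of degree $d$, where $\delta = \binom{d-1}{2} - g$; equivalently, its image is a general point of the Severi variety $V_{d, \delta} \subset |\oo_{\pp^2}(d)|$, which is irreducible of dimension $3d + g - 1$. Intersecting with our fixed general conic $Q$ defines a rational map $\mu \colon V_{d, \delta} \dashrightarrow \operatorname{Sym}^{2d} Q$, and since $Q \cong \pp^1$ the assertion of the theorem is exactly that $\mu$ is dominant. Equivalently, writing $\bar\mu$ for the extension of $\mu$ to $\overline{V_{d,\delta}}$ (a morphism near any $[X]$ with $X \not\supseteq Q$), it suffices to show that the constructible set $\operatorname{im}(\bar\mu)$ is dense in $\operatorname{Sym}^{2d} Q$.

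To this end, I would show directly that $\operatorname{im}(\bar\mu)$ contains every general divisor $D = p_1 + \dots + p_{2d}$ of degree $2d$ on $Q$. As $Q$ is a conic, the chord through any two general points of $Q$ meets $Q$ in exactly those two points; grouping the $p_i$ into $d$ pairs and taking the corresponding chords $L_1, \dots, L_d$ produces a union $X_0 = L_1 \cup \dots \cup L_d$ of $d$ lines in general position, hence a reduced plane curve of degree $d$ with exactly $\binom{d}{2}$ nodes, all lying off $Q$, and with $X_0 \cap Q = D$. Since $\delta = \binom{d-1}{2} - g \leq \binom{d-1}{2} = \binom{d}{2} - (d - 1)$, one may smooth an appropriate subset of $\binom{d}{2} - \delta$ of the nodes of $X_0$ --- chosen so as to include a spanning tree of the dual graph, so that the smoothing is irreducible --- thereby exhibiting $X_0$ as a point of $\overline{V_{d,\delta}}$; this is the classical Severi construction of nodal plane curves, and the relevant smoothing is unobstructed. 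Then $D = \bar\mu([X_0]) \in \operatorname{im}(\bar\mu)$.

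Letting $D$ range over all general divisors of degree $2d$ on $Q$ now shows that $\operatorname{im}(\bar\mu)$ contains a dense subset of $\operatorname{Sym}^{2d} Q$; being constructible, it is therefore dense, so $\mu$ is dominant, which is the theorem. There is no real obstacle here --- this is why the case is recorded as trivial. The one non-formal ingredient is that a union of $d$ general lines lies in $\overline{V_{d,\delta}}$ for every $0 \leq \delta \leq \binom{d-1}{2}$, which is standard; alternatively one can sidestep $V_{d,\delta}$ for the ambient linear system, since $H^1(\pp^2, \oo_{\pp^2}(d - 2)) = 0$ forces the restriction $H^0(\pp^2, \oo_{\pp^2}(d)) \to H^0(Q, \oo_Q(d))$ to be surjective, so that every divisor of degree $2d$ on $Q$ is already cut out by some plane curve of degree $d$, and the content of the theorem is precisely that one may take this curve to be a Brill--Noether curve.
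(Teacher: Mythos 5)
Your proof is correct, but it takes a genuinely different route from the paper's. The paper deduces Theorem~\ref{main-2} from Proposition~\ref{p2}: for any unramified map $f \colon C \to \pp^2$ from a curve, adjunction gives $N_f \simeq K_C(3)$, hence $N_f(-2) \simeq K_C(1)$ and $H^1(N_f(-2)) \simeq H^0(\oo_C(-1))^\vee = 0$ by Serre duality; the deformation-theoretic framework of the introduction (dominance of $f \mapsto f(C) \cap Q$ is governed by the vanishing of $H^1(N_f(-2))$) then finishes. That argument is a few lines, requires no genericity of $f$ whatsoever, and is uniform with the strategy used for $\pp^3$ and $\pp^4$. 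You instead translate the statement into dominance of $\mu \colon V_{d,\delta} \dashrightarrow \sym^{2d}(Q)$ and exhibit, for each general divisor $D$, a boundary point of $\overline{V_{d,\delta}}$ --- the union of $d$ chords of $Q$ --- mapping to $D$. This works, and it is pleasantly explicit, but it imports heavier classical inputs: the identification of the image of a general BN-curve with a general point of $V_{d,\delta}$ (which needs either Harris's irreducibility theorem for Severi varieties together with a dimension count, or the irreducibility of the space of BN-curves plus the fact that a general $g^2_d$ on a general curve maps it birationally onto a nodal plane curve), and the unobstructedness of partial smoothings of a union of $d$ general lines (the $\binom{d}{2}$ nodes impose independent conditions on curves of degree $d$, so any prescribed subset may be smoothed; your spanning-tree condition correctly ensures irreducibility, and the count $\binom{d}{2} - \delta \geq d-1$ shows such a subset exists). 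These are all standard facts correctly invoked, so the argument stands; it is simply a much longer path to a statement the paper obtains from adjunction and Serre duality alone.
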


\begin{thm} \label{main-2-1}
Let $f \colon C \to \pp^2$ be a general BN-curve of degree~$d$ and genus~$g$. Then
the intersection $f(C) \cap L$, of $C$ with a general line $L$, consists
of a general collection of $d$ points on~$L$.
\end{thm}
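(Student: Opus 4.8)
The plan is to fix once and for all a general line $L \subset \pp^2$ and prove that the natural rational map
\[\Psi_L \colon \bar{M}_g(\pp^2, d)^\circ \dashrightarrow \operatorname{Sym}^d L \cong \pp^d, \qquad [f \colon C \to \pp^2] \mapsto \bigl[\,f(C) \cap L\,\bigr]\]
is dominant. This suffices: the analogous map over the Grassmannian of lines (a $\pp^d$-bundle) is $\operatorname{PGL}_3$-equivariant, and $\operatorname{PGL}_3$ acts transitively on lines, so dominance of $\Psi_L$ for one general $L$ is equivalent to the intersection of the general BN-curve with the general line being a general point of $\operatorname{Sym}^d L$. First I would note that $\Psi_L$ is genuinely defined on a dense open set: a general BN-curve $f \colon C \to \pp^2$ is an immersion birational onto a nodal plane curve of degree $d$, and by Bertini a general line misses its finitely many nodes and meets it transversally, so $f(C) \cap L$ is a reduced length-$d$ subscheme of $L$.

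Next I would exhibit one point of $\bar{M}_g(\pp^2,d)^\circ$ at which $\Psi_L$ manifestly takes arbitrary values. Choose general lines $\ell_1, \dots, \ell_d \subset \pp^2$, and let $X = \bigcup_i \ell_i$; since $\rho(d,g,2) \geq 0$ forces $g \leq \binom{d-1}{2}$, we have $g + d - 1 \leq \binom{d}{2}$, so we may form the stable map of genus $g$ and degree $d$ whose source is obtained by gluing $d$ copies of $\pp^1$ at $g + d - 1$ of the $\binom{d}{2}$ points $\ell_i \cap \ell_j$, with the $i$-th component mapping isomorphically onto $\ell_i$. The essential input is that $[X] \in \bar{M}_g(\pp^2,d)^\circ$: the general BN-curve of degree $d$ and genus $g$ in $\pp^2$ is a general point of the Severi variety $V_{d,\delta}$ with $\delta = \binom{d-1}{2} - g$, and the closure of $V_{d,\delta}$ contains $X$ with any prescribed choice of which $g + d - 1$ of its nodes remain unsmoothed. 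This is precisely the classical degeneration of a plane curve to a union of lines underlying the irreducibility of Severi varieties, and I would cite it rather than reprove it; this is the one place the argument is not purely formal.

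Finally, at $[X]$ one has $\Psi_L([X]) = \sum_{i=1}^d (\ell_i \cap L) \in \operatorname{Sym}^d L$, since for general $L$ the scheme-theoretic intersection of $\bigcup_i \ell_i$ with $L$ is exactly these $d$ distinct points. As the $\ell_i$ range over general lines, the points $\ell_i \cap L$ range independently over all of $L$, so $\sum_i(\ell_i \cap L)$ sweeps out a dense subset of $\operatorname{Sym}^d L$; restricting $\Psi_L$ to the locus of such curves $X$ is therefore already dominant, hence $\Psi_L$ is dominant. Thus the main obstacle is entirely the component-membership $[X] \in \bar{M}_g(\pp^2,d)^\circ$; everything else reduces to the equivariance argument and the triviality that every degree-$d$ divisor on $\pp^1$ is a sum of $d$ points. (In the extreme case $g = \binom{d-1}{2}$, where a general BN-curve is already a smooth plane curve of degree $d$, even this input is unnecessary and the statement is just the surjectivity of the restriction $H^0(\pp^2, \oo_{\pp^2}(d)) \to H^0(L, \oo_L(d))$.)
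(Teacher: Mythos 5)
Your argument is correct in outline, but it takes a genuinely different route from the paper. The paper handles both $\pp^2$ statements at once via Proposition~\ref{p2}: for any map $f \colon C \to \pp^2$ the normal sheaf is the \emph{line bundle} $N_f \simeq K_C(3)$ by adjunction, so $H^1(N_f(-1)) \simeq H^1(K_C(2)) \simeq H^0(\oo_C(-2))^\vee = 0$ by Serre duality, and the deformation-theoretic criterion from the introduction (the map $[f] \mapsto f(C) \cap L$ is dominant if and only if $H^1(N_f(-1)) = 0$ for $[f]$ general) finishes the proof in two lines, with no degeneration and no external input. Your proof instead degenerates to a union of $d$ general lines glued at $g + d - 1$ of their pairwise intersections and observes that the resulting divisors $\sum_i (\ell_i \cap L)$ sweep out a dense subset of $\operatorname{Sym}^d L$; the reduction to dominance of $\Psi_L$, the count $g + d - 1 \leq \binom{d}{2}$, and the final sweep are all fine, and restricting $\Psi_L$ to a locus in its domain on which it is already dominant is a legitimate way to conclude. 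What your route buys is concreteness and independence from normal-bundle cohomology; what it costs is that the entire weight falls on the membership $[X] \in \bar{M}_g(\pp^2,d)^\circ$, which needs both (i) that the union of lines with an arbitrary choice of $\delta = \binom{d-1}{2} - g$ unassigned nodes smooths to an irreducible $\delta$-nodal curve, and (ii) that this smoothing lies in the BN-component, which requires the irreducibility of the Severi variety together with the fact that the general BN-curve has nodal image, so that $V_{d,\delta}$ and the BN-component share a dense open locus. These are classical but are far heavier inputs than anything the paper invokes here. Two small points to tighten if you pursue your version: choose the $g+d-1$ gluing nodes so that the dual graph is connected (otherwise the source is disconnected and the arithmetic-genus count fails), and note explicitly that for fixed general $L$ the map $\Psi_L$ is defined at every such $[X]$ with all $\ell_i$ transverse to $L$ and missing the relevant special points, so the order of quantifiers causes no trouble.
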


\begin{thm} \label{main-3}
Let $f \colon C \to \pp^3$ be a general BN-curve of degree~$d$ and genus~$g$. Then
the intersection $f(C) \cap Q$, of $C$ with a general quadric $Q$, consists
of a general collection of $2d$ points on $Q$, unless
\[(d, g) \in \{(4, 1), (5, 2), (6, 2), (6, 4), (7, 5), (8, 6)\}.\]
And conversely, in the above cases, we may describe the intersection
$f(C) \cap Q \subset Q \simeq \pp^1 \times \pp^1$ in terms of
the intrinsic geometry of $Q \simeq \pp^1 \times \pp^1$ as follows:

\begin{itemize}
\item If $(d, g) = (4, 1)$, then $f(C) \cap Q$ is the intersection of two general curves
of bidegree $(2, 2)$.

\item If $(d, g) = (5, 2)$, then $f(C) \cap Q$ is a general collection of $10$ points
on a curve of bidegree~$(2, 2)$.

\item If $(d, g) = (6, 2)$, then $f(C) \cap Q$ is a general collection of $12$ points
$p_1, \ldots, p_{12}$ lying on a curve $D$ which satisfy:
\begin{itemize}
\item The curve $D$ is of bidegree $(3, 3)$ (and so is in particular of arithmetic genus $4$).
\item The curve $D$ has two nodes (and so is in particular of geometric genus $2$).
\item The divisors $\oo_D(2,2)$ and $p_1 + \cdots + p_{12}$ are linearly equivalent
when pulled back to the normalization of $D$.
\end{itemize}

\item If $(d, g) = (6, 4)$, then $f(C) \cap Q$ is the intersection of
two general curves
of bidegrees $(2, 2)$ and $(3,3)$ respectively.

\item If $(d, g) = (7, 5)$, then $f(C) \cap Q$ is a general collection of $14$ points
$p_1, \ldots, p_{14}$ lying on a curve $D$ which satisfy:
\begin{itemize}
\item The curve $D$ is of bidegree $(3, 3)$.
\item The divisor $p_1 + \cdots + p_{14} - \oo_D(2, 2)$ on $D$
is effective.
\end{itemize}

\item If $(d, g) = (8, 6)$, then $f(C) \cap Q$ is a general collection of $16$
points on a curve of bidegree~$(3,3)$.
\end{itemize}
In particular, the above descriptions show $f(C) \cap Q$ is not a general collection
of $2d$ points on~$Q$.
\end{thm}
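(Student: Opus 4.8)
The plan is to reduce the theorem to a single cohomological vanishing on $C$, establish it by degeneration away from the exceptional list, and analyze the six exceptional pairs by hand. Write $N_f$ for the normal bundle of the immersion $f$, a rank-two bundle on $C$ of degree $4d + 2g - 2$, and set $\Gamma := f^{*}Q$, an effective divisor of degree $2d$ which for general $Q$ is reduced and transverse. A first-order deformation of $f$ inside $\bar{M}_g(\pp^3,d)^{\circ}$ is a section of $N_f$; using $T_{f(p)}\pp^3 = T_pC \oplus T_{f(p)}Q$ at each $p \in \Gamma$, one checks that the induced first-order motion of the intersection point at $p$, regarded in $T_{f(p)}Q$, is exactly the image of the section under the restriction $N_f \to N_f|_p$. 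Hence $f \mapsto f(C)\cap Q$ has surjective differential — so that $f(C)\cap Q$ is a general point of $\mathrm{Sym}^{2d}Q$ — precisely when $H^0(N_f) \to H^0(N_f|_{\Gamma})$ is surjective; and since $\oo_C(-\Gamma) \simeq f^{*}\oo_{\pp^3}(-2)$, this holds once $H^1(C, N_f(-2)) = 0$. Because $\deg N_f = 4d + 2g - 2$ forces $\chi(N_f(-2)) = 0$, the vanishing is in fact also necessary for the general BN-curve, so the six exceptional pairs are exactly those where it fails. As the BN-locus is irreducible, it suffices to exhibit, for each $(d,g)$ with $\rho(d,g,3) \ge 0$ outside the list, one (possibly reducible) BN-curve with $H^1(N_f(-2)) = 0$.

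\textbf{Step 2: the vanishing for non-exceptional $(d,g)$.}
I would argue by induction on $d$. Degenerate $C$ to a nodal BN-curve $C_0 = C' \cup R$ with $R$ a line or conic meeting a BN-curve $C'$ of lower degree in a controlled number $\delta$ of general points — chosen so that $\deg C' = d - \deg R$ and $g(C') = g - \delta + 1$ keep $C'$ outside the exceptional list, and so that $C_0$ smooths within $\bar{M}_g(\pp^3,d)^{\circ}$ (a transverse union of BN-curves at a general node set of the right size is again a BN-curve). Passing to the locally free modifications of $N_{f_0}$ at the nodes, twisted appropriately on the two branches, the exact sequences for the normal sheaf of a nodal curve bound $H^1(N_f(-2)|_{C_0})$ in terms of $H^1$ of a mild twist of $N_f(-2)$ on $C'$, controlled by induction, together with $H^1$ of an explicit bundle on $R \simeq \pp^1$, a direct computation; semicontinuity then transfers the vanishing to the nearby smooth BN-curve. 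The base of the induction is a finite list of small $(d,g)$, each settled by writing down an explicit curve — on a smooth quadric or cubic surface, or via liaison in such a surface in the spirit of Ellingsrud--Hirschowitz and Perrin — and computing the relevant cohomology directly (equivalently, checking by hand that its intersection with a general quadric is general).

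\textbf{Step 3: the six exceptional cases.}
In each, the general BN-curve of the prescribed degree and genus has a classical description with extra structure — most transparently, a general degree-$4$ genus-$1$ curve is a complete intersection of two quadrics $Q_1 \cap Q_2$, whence $f(C)\cap Q = (Q_1 \cap Q)\cap(Q_2 \cap Q)$ is the intersection of two bidegree-$(2,2)$ curves on $Q \simeq \pp^1 \times \pp^1$; the remaining five are likewise curves of small degree lying on a quadric or cubic, or residual to such under liaison. Given such a description one computes $f(C)\cap Q$: when $C$ lies on a quadric $Q'$, the intersection is contained in the bidegree-$(2,2)$ curve $D := Q' \cap Q$, and the $2d$ points are cut on $D$ by a divisor class read off from the geometry; a dimension count on $Q$ then decides whether they exhaust a complete intersection of two such curves, a general collection of the stated size on $D$, or a collection satisfying the prescribed linear-equivalence or effectivity condition (the $(6,2)$ and $(7,5)$ cases). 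In every case the resulting configuration is manifestly not that of $2d$ general points on $Q$.

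\textbf{Main obstacle.}
The crux is Step 2: since $\chi(N_f(-2)) = 0$ there is no numerical slack, so the degeneration must be arranged so that both $C'$ and the residual rational curve $R$ cooperate while $C_0$ remains a smoothable BN-curve, and the base of the induction must be determined and verified exhaustively. Pinning down precisely the six exceptional pairs — with no omissions and no spurious additions — is where the genuine work lies.
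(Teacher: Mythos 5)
Your overall architecture matches the paper's: reduce generality of $f(C)\cap Q$ to the vanishing $H^1(N_f(-2))=0$ for one BN-curve of each $(d,g)$, prove that vanishing by degenerating to reducible curves, and treat the six exceptional pairs by exhibiting the extra structure (complete intersections, curves on low-degree surfaces) plus a dimension count. Step 1 and the outline of Step 3 are essentially the paper's. The problem is Step 2.

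The induction you propose --- attaching only a line or a conic to a lower-degree BN-curve $C'$ --- cannot reach the curves of large genus relative to their degree. If $R$ is rational of degree $e$ attached to $C'$ at $\delta$ points, then $\rho(d,g,3)=\rho(d',g',3)+4e-3(\delta-1)$. A line meets at most $2$ general points and a conic at most $3$ (and the smoothing criterion imposes the same bounds), so every step that actually lowers the genus satisfies $4e-3(\delta-1)\geq 1$, i.e.\ $\rho(d',g')\leq\rho(d,g)-1$. Hence from a curve with $\rho=0$, e.g.\ $(d,g)=(9,8)$, there is no admissible degeneration of the kind you describe: the required $C'$ would have $\rho<0$. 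Since you must lower the genus all the way to your finite base list, your scheme only covers $(d,g)$ with $\rho\gtrsim$ (genus to be shed), which excludes an infinite family near the Brill--Noether bound. The paper's way around this is the key device you are missing: attach a \emph{canonical curve} of genus $4$ (degree $6$) at $5$ general points, which sends $(d,g)\mapsto(d-6,g-8)$ and preserves $\rho$ exactly (Lemma~\ref{add-can-3}); proving the gluing hypotheses there requires a genuinely non-generic argument (a cubic surface through $f(D)$ tangent to $f(C)$ at four of the five nodes). Separately, even the step you do propose is not a "direct computation'': attaching a line at a single point to raise $d$ at fixed $g$ has no numerical slack ($\chi(N_f(-2))=0$ on both sides), and the paper must verify a second-order tangency/transversality statement by an explicit local coordinate computation (Lemma~\ref{p3-add-line}); your sketch acknowledges the lack of slack but supplies no mechanism to overcome it. Finally, your base cases are heavier than "write down an explicit curve'': the paper needs curves on cubic surfaces for $(7,4)$ and $(8,5)$ and a projection of a curve on the cubic scroll in $\pp^4$ for $(5,1)$, and in your Step 3 the $(6,2)$ converse likewise requires the scroll construction rather than a surface containing $C$ in $\pp^3$.
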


\begin{thm} \label{main-3-1}
Let $f \colon C \to \pp^3$ be a general BN-curve of degree~$d$ and genus~$g$. Then
the intersection $f(C) \cap H$, of $C$ with a general plane $H$, consists
of a general collection of $d$ points on $H$, unless
\[(d, g) = (6, 4).\]
And conversely, for $(d, g) = (6, 4)$, the intersection $f(C) \cap H$
is a general collection of $6$ points on a conic in $H \simeq \pp^2$; in particular,
it is not a general collection of $d = 6$ points.
\end{thm}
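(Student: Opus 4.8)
\emph{Proof proposal for Theorem~\ref{main-3-1}.}

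The plan is to split into the generic pairs $(d,g)$, where I must prove the section is a general collection of $d$ points on $H\simeq\pp^2$, and the single pair $(6,4)$, where I must pin the section down. Recall that here $\dim\bar{M}_g(\pp^3,d)^\circ=4d$ is independent of $g$ (the coefficient $r-3$ of $g-1$ vanishes), and that a general plane $H$ corresponds to a general section $s$ of the degree-$d$ line bundle $L=f^*\oo_{\pp^3}(1)$, with $D:=f^{-1}(H)$ its zero divisor and $\oo_C(D)\cong L$; the sequence $0\to L^{n-1}\to L^n\to L^n|_D\to 0$ then ties the way $D\subset\pp^2$ sits with respect to degree-$n$ forms to the behaviour of the maps $H^0(\pp^3,\oo(n))\to H^0(D,\oo_D(n))$. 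Since (unlike for points on a line) maximal rank of these maps does not by itself imply that $D$ is a general point of the symmetric product $\mathrm{Sym}^d\pp^2$ — e.g.\ one point off a conic through the remaining $d-1$ already has maximal Hilbert function — the real goal is dominance of the classifying map to $\mathrm{Sym}^d\pp^2$ (equivalently, to the universal $\mathrm{Sym}^d$ over the space of planes in $\pp^3$).

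For $(d,g)\neq(6,4)$ I would induct on $(d,g)$ among pairs with $\rho(d,g,3)\ge 0$. The inductive step degenerates a general BN-curve $f\colon C\to\pp^3$ to a nodal union $C_1\cup_\Gamma C_2$ of WBN-curves of degrees $d_1+d_2=d$ and genera with $g_1+g_2+\#\Gamma-1=g$, chosen — by the usual $\rho$-bookkeeping governing when a reducible nodal curve smooths inside $\bar{M}_g(\pp^3,d)^\circ$ — so that the limit still lies on the BN component and neither piece nor the gluing is degenerate. A general plane $H$ avoids the finitely many nodes $\Gamma$, so $(C_1\cup_\Gamma C_2)\cap H=(C_1\cap H)\sqcup(C_2\cap H)$; by induction each $C_i\cap H$ is a general collection of $d_i$ points on $H$, and — this is the crux — after breaking the node the two curves deform independently, so the two sub-collections vary in transverse directions and their disjoint union is a general collection of $d$ points on $H$. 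Since the closure of the image of the classifying map contains this (general) limiting configuration, the image is dense, which gives the result for the general curve of type $(d,g)$. The base cases are the finitely many small pairs not reached by any admissible splitting — twisted cubics, conics, plane curves and the like — handled directly; in particular one checks that the five pairs exceptional for the quadric in Theorem~\ref{main-3} present no obstruction on the plane side, consistently with the present theorem.

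For $(d,g)=(6,4)$ we have $\rho=0$, and a $g^3_6$ on a general genus-$4$ curve is forced to be the canonical series: if $L$ is a $g^3_6$ then $h^0(K-L)=h^0(L)-\deg L+g-1=1$, so $K-L$ is effective of degree $0$, i.e.\ $L=K$. Hence the general BN-curve is the canonical curve $C\subset\pp^3$, which lies on a unique smooth quadric $Q_0$ and has bidegree $(3,3)$ on $Q_0\simeq\pp^1\times\pp^1$. Then $f(C)\cap H=C\cap(Q_0\cap H)$ lies on the conic $E:=Q_0\cap H$, which is a general conic of $H\simeq\pp^2$. To see the six points $C\cap E$ are general on $E\simeq\pp^1$: as $C$ varies in $|\oo_{Q_0}(3,3)|$, the restriction $H^0(Q_0,\oo_{Q_0}(3,3))\to H^0(E,\oo_{Q_0}(3,3)|_E)=H^0(\pp^1,\oo_{\pp^1}(6))$ is surjective, since $H^1(Q_0,\oo_{Q_0}(2,2))=0$; so $C\cap E$ ranges over all effective degree-$6$ divisors on $E$, and together with the freedom in $E$ this exhibits $f(C)\cap H$ as a general collection of six points on a general conic in $H$. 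That such a configuration is not six general points of $\pp^2$ (it fails to impose independent conditions on conics) gives the converse assertion. I would also record the short $h^0$-computation showing that for $\rho(d,g,3)\ge 0$ and $(d,g)\neq(6,4)$ a general BN-curve lies on no quadric surface; this both explains why $(6,4)$ is the only exception and is used in handling some base cases.

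The main obstacle is organizing the induction: choosing, for every $(d,g)$ with $\rho\ge 0$ and $(d,g)\ne(6,4)$, a chain of admissible degenerations down to a short, explicitly checkable list of base cases, while never routing through $(6,4)$ — or any other special pair — as a building block, since a non-general piece would propagate. The second, more conceptual difficulty is precisely that maximal rank does not imply generality for points in the plane, so the substance of the inductive step is the \emph{independence} of the deformations of $C_1$ and $C_2$ — that the hyperplane sections move in transverse directions inside $\mathrm{Sym}^{d_1}\pp^2\times\mathrm{Sym}^{d_2}\pp^2$ — and making this rigorous requires controlling the local geometry of $\bar{M}_g(\pp^3,d)^\circ$ near its boundary.
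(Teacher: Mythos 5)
Your converse for $(d,g)=(6,4)$ is essentially correct and close to the paper's: the $g^3_6$ is canonical, $C$ lies on a unique quadric, $C\cap H$ lands on the conic $Q_0\cap H$, and surjectivity of $H^0(\oo_{Q_0}(3,3))\to H^0(\oo_E(6))$ shows the six points are general on the conic. (The paper phrases this via the complete intersection $Q\cap S$ and quotes Theorem~\ref{main-2} for $(3,1)$ to realize any six points on a conic; either route works, modulo checking the cutting curve can be taken smooth.) The generic case, however, has a genuine gap exactly where you flag it. The assertion that after degenerating to $C_1\cup_\Gamma C_2$ ``the two curves deform independently, so the two sub-collections vary in transverse directions'' is not a proof but a restatement of the problem: the pieces are constrained to meet along $\Gamma$, so the boundary locus has dimension $4d-\#\Gamma$, and dominance of even the single projection $(C_1,C_2)\mapsto C_1\cap H$ from this constrained family does not follow from dominance for unconstrained $C_1$. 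The paper's whole mechanism for making this rigorous is the criterion you never invoke: $f\mapsto f(C)\cap H$ is dominant iff $H^1(N_f(-1))=0$ for general $f$, which converts ``independence of deformations'' into a cohomology vanishing that can be propagated through the degeneration by the gluing lemmas (Lemma~\ref{glue} and its variants), keeping careful track of the fact that $N_f|_{C_i}$ is a modification of $N_{f|_{C_i}}$ at the nodes and that a surjectivity condition at $\Gamma$ must be verified. Without some substitute for this, your inductive step does not close.

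The second problem is the base cases, which you leave as ``handled directly.'' The paper reduces (via $H^1(N_f(-2))=0\Rightarrow H^1(N_f(-1))=0$) to checking the Theorem~\ref{main-3} exceptions other than $(6,4)$; of these, $(4,1)$, $(5,2)$, $(6,2)$ follow from interpolation, but $(7,5)$ and $(8,6)$ are genuinely delicate --- these are precisely curves lying on cubic surfaces where the quadric statement fails, and the paper must construct $(7,5)$ explicitly as a curve of class $6L-E_1-2E_2-\cdots-2E_6$ on a cubic surface and verify $H^1(N_C(-1))=0$ through the normal bundle sequence and the Picard-variation argument of Lemma~\ref{del-pezzo}, then reduce $(8,6)$ to $(7,5)$ by attaching a $2$-secant line. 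Your sketch neither identifies these residual cases nor offers a mechanism that would handle them; a naive splitting of $(7,5)$ (say a sextic of genus $3$ plus a trisecant line) does not obviously yield a general seventh point, since trisecant lines form only a one-parameter family. So the proposal is a plausible outline whose two acknowledged ``difficulties'' are in fact the entire content of the proof.
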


\begin{thm} \label{main-4}
Let $f \colon C \to \pp^4$ be a general BN-curve of degree~$d$ and genus~$g$. Then
the intersection $f(C) \cap H$, of $C$ with a general hyperplane $H$, consists
of a general collection of $d$ points on $H$, unless
\[(d, g) \in \{(8, 5), (9, 6), (10, 7)\}.\]
And conversely, in the above cases, we may describe the intersection
$f(C) \cap H \subset H \simeq \pp^3$ in terms of
the intrinsic geometry of $H \simeq \pp^3$ as follows:

\begin{itemize}
\item If $(d, g) = (8, 5)$, then $f(C) \cap H$ is the intersection of three general quadrics.
\item If $(d, g) = (9, 6)$, then $f(C) \cap H$ is a general collection of $9$ points
on a curve $E \subset \pp^3$ of degree~$4$ and genus~$1$.

\item If $(d, g) = (8, 5)$, then $f(C) \cap H$ is a general collection of $10$ points
on a quadric.
\end{itemize}
\end{thm}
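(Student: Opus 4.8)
\emph{Reformulation via the twisted normal bundle.}
The plan is to translate the conclusion into a cohomological statement about a twist of the normal bundle of $f$.
As is standard for results of this type (and as set up in the preliminary sections), the intersection $f(C)\cap H$ is a general collection of $d$ points on $H\simeq\pp^3$ if $H^1(C, N_f(-1))=0$, where $N_f$ is the normal bundle of the immersion $f$ and $N_f(-k):=N_f\otimes f^*\oo_{\pp^4}(-k)$; and more generally the quantity $h^1(C,N_f(-1))$ governs the failure of generality, the ``extra conditions'' imposed on $f(C)\cap H$ being visible in the subsheaf of $N_f(-1)$ responsible for the nonvanishing.
Note that $\chi(N_f(-1)) = 2d-g+1 = \dim\bar M_g(\pp^4,d)^\circ - 3d$, which is exactly the necessary inequality from the introduction and is positive whenever $\rho(d,g,4)\geq 0$; so the entire content is the vanishing of $H^1$.
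For the non-exceptional $(d,g)$ it therefore suffices to prove $H^1(C,N_f(-1))=0$ for $f$ a general BN-curve of degree $d$ and genus $g$ to $\pp^4$.

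\emph{The vanishing, by degeneration.}
I would prove this by induction on $(d,g)$, degenerating $C$ to a nodal curve $C_0 = C'\cup_\Gamma Y$ in which $Y$ is one of the usual elementary building blocks (a $1$- or $2$-secant line, a $2$-secant conic, a rational normal curve meeting $C'$ in few general points, or an elliptic tail), chosen so that: (i) by the gluing lemmas for BN-curves, $C_0$ is a limit of BN-curves of the prescribed degree and genus; and (ii) $C'$ is a BN-curve of strictly smaller invariants to which the inductive hypothesis applies.
From the restriction sequence $0\to N_{f_0}|_{C'}(-\Gamma)\to N_{f_0}\to N_{f_0}|_Y\to 0$ and its twist by $f^*\oo_{\pp^4}(-1)$, together with the identification of $N_{f_0}|_{C'}$ with a positive modification of $N_{f'}$ along $\Gamma$, the desired vanishing on $C_0$ --- hence, by semicontinuity, on the general smooth $C$ --- reduces to the inductive vanishing on $C'$ and an elementary vanishing on $Y$, the latter being immediate because $Y$ has low degree and is attached at general points, so the relevant twisted bundle on $Y$ is nonspecial.
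The induction terminates at finitely many base cases with $d$ (and $\rho$) small, checked by hand.

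\emph{The three exceptional cases.}
Separately, one identifies and describes the exceptions.
For $(d,g)\in\{(8,5),(9,6),(10,7)\}$ a general BN-curve $f\colon C\to\pp^4$ is nonspecial with $h^0(\oo_C(2)) = 2d-g+1$, so $h^0(\mathcal{I}_C(2))\geq 15-(2d-g+1) = 3,\,2,\,1$ respectively; these quadrics restrict to independent quadrics through $f(C)\cap H$ in $H\simeq\pp^3$, whereas a general collection of $d$ points on $\pp^3$ lies on only $10-d = 2,\,1,\,0$ quadrics, which proves the ``unless.''
To pin down the geometry one notes that $C$ then lies, respectively, on the complete intersection of three quadrics --- indeed $C$ is the general canonical curve of genus $5$ --- on a quartic del Pezzo surface $S$ (the general projection from a point of the canonical genus-$6$ curve), and on a quadric threefold $Q$; cutting with $H$ and observing that these ambient varieties are themselves general of their kind yields the stated descriptions, with $E = S\cap H$ the elliptic normal quartic and $Q\cap H$ the quadric surface.
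Sharpness of each description is then an argument of the same shape carried out relative to the containing variety: e.g.\ $f(C)\cap H$ is a general collection of $10$ points on $Q\cap H$ because the appropriate twist of the normal bundle of $C$ in the quadric threefold has vanishing $H^1$ (again by degeneration), similarly for $(9,6)$ on the del Pezzo surface, while the canonical case is immediate since a general hyperplane section of a general complete intersection of three quadrics in $\pp^4$ is a general complete intersection of three quadrics in $\pp^3$.

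\emph{Main obstacle.}
The crux is the inductive step of the second paragraph: for every non-exceptional $(d,g)$ with $\rho\geq 0$ one must exhibit a degeneration that simultaneously remains inside the BN-component of the correct degree and genus and is compatible with propagating the twisted $H^1$-vanishing, and in particular one must dispose of the finitely many small-$d$, small-$\rho$ base cases, where there is little room to degenerate.
A secondary difficulty is the sharpness of the three exceptional descriptions, which requires the analogous ``relative'' generality statements on the quadric threefold and on the quartic del Pezzo surface; these are mild variants of the main argument, but they are what shows that $f(C)\cap H$ is constrained by nothing beyond the quadrics it is forced to lie on.
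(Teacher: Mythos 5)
Your reduction to $H^1(N_f(-1))=0$, the Euler characteristic bookkeeping, and the quadric-counting argument showing that $(8,5)$, $(9,6)$, $(10,7)$ must be exceptional ($h^0(\mathcal{I}_C(2))\geq 3,2,1$ versus the $10-d$ quadrics through $d$ general points of $\pp^3$) all match the paper and are correct. The overall shape --- induction by degeneration plus a separate analysis of the exceptions --- is also the paper's. But the two places you flag as "obstacles" are not loose ends; they are where essentially all the content lives, and as described your inductive step would not close. Attaching only lines, conics, rational normal curves and elliptic tails at general points does not let you traverse the whole $(d,g)$ range with $\rho\geq 0$: the paper's genus-increasing step attaches a \emph{canonical curve of genus $7$} through $6$ general points (and, in a second regime, curves of degree $9$ and genus $6$ lying inside a hyperplane), and verifying the gluing hypotheses there is not the assertion that "the relevant twisted bundle on $Y$ is nonspecial" --- it is the surjectivity of an evaluation map from $H^0(N_{f_2}(-1))$ onto the normal directions at the six nodes, which the paper only establishes by degenerating the other component to three skew lines and exploiting the presentation of the canonical curve as a complete intersection of three quadrics adapted to those lines. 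Likewise the terminal base cases are not "checked by hand": the case $(9,5)$ is done by putting the curve on a quartic Del Pezzo surface $S\subset\pp^4$ and proving a statement (the paper's Lemma~\ref{del-pezzo}) about how the class $[C]\in\pic S$ restricts to the hyperplane section as $S$ varies --- a genuinely new idea relative to your sketch.

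On the converses, your identification of the containing varieties is right, but the "sharpness" direction is more than a mild variant of the main argument. For $(10,7)$ the paper does not compute a normal bundle in the quadric threefold; given $10$ general points on a quadric surface in $H$, it must \emph{construct} a degree-$10$, genus-$7$ curve through them, which it does by exhibiting a sextic K3 surface in $\pp^4$ containing a $4$-secant rational normal quartic and taking a curve of class $H+R$ on it. For $(9,6)$ one similarly needs a quartic Del Pezzo surface containing a prescribed elliptic quartic together with a chosen $1$-secant line. Your proposed relative-normal-bundle route might be made to work, but it would have to let the containing quadric (respectively Del Pezzo surface) vary and track the induced constraint on $\pic$ of the hyperplane section --- which is again exactly the content of Lemma~\ref{del-pezzo}, not something that follows formally. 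One last small point: for $(8,5)$ the curve is canonical, so $\oo_C(1)$ is special; your computation only needs $\oo_C(2)$ nonspecial, which is what is true, so the phrase "a general BN-curve is nonspecial" should be adjusted.
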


The above theorems can be proven by studying the normal bundle of
the general BN-curve $f \colon C \to \pp^r$: For any hypersurface $S$ of degree $n$,
and unramified map $f \colon C \to \pp^r$ dimensionally transverse to $S$,
basic deformation theory implies that the map
\[f \mapsto (f(C) \cap S)\]
(from the corresponding Kontsevich space of stable maps, to the
corresponding symmetric power of $S$)
is smooth at $[f]$ if and only if
\[H^1(N_f(-n)) = 0.\]
Here, $N_f(-n) = N_f \otimes f^* \oo_{\pp^r}(-n)$
denotes the twist of the normal bundle $N_f$ of the map $f \colon C \to \pp^r$;
this is the vector bundle on the domain $C$ of $f$ defined via
\[N_f = \ker(f^* \Omega_{\pp^r} \to \Omega_C)^\vee.\]

Since a map between reduced irreducible varieties is dominant
if and only if it is generically smooth, the map $f \mapsto (f(C) \cap S)$ is therefore dominant if and only if
$H^1(N_f(-n)) = 0$ for $[f]$ general.

This last condition being visibly open, our problem is thus to prove
the existence of an unramified BN-curve $f \colon C \to \pp^r$ of specified degree and genus,
for which $H^1(N_f(-n)) = 0$.
For this, we will use a variety of techniques, most crucially specialization
to a map from a reducible curve $X \cup_\Gamma Y \to \pp^r$.

We begin, in
Section~\ref{sec:reducible}, by giving several tools
for studying the normal bundle of a map from a reducible curve.
Then in
Section~\ref{sec:inter}, we review results on the closely-related
\emph{interpolation problem} (c.f.\ \cite{firstpaper}).
In Section~\ref{sec:rbn}, we review results about when certain maps from reducible
curves, of the type we shall use, are BN-curves.
Using these techniques, we then concentrate our attention in Section~\ref{sec:indarg} on
maps from reducible curves $X \cup_\Gamma Y \to \pp^r$ where $Y$ is a line or canonical curve.
Consideration of these curves enables us to make an inductive argument
that reduces our main theorems to finite casework.

This finite casework is then taken care of in three steps:
First, in Sections~\ref{sec:hir}--\ref{sec:hir-3}, we again use degeneration
to a map from a reducible curve, considering the special case when $Y \to \pp^r$ factors through a
hyperplane.
Second, in Section~\ref{sec:in-surfaces},
we specialize to immersions of smooth curves contained in Del Pezzo surfaces, and study
the normal bundle of our curve using the
normal bundle exact sequence for a curve in a surface.
Lastly, in Section~\ref{sec:51} we use the geometry of the cubic scroll in $\pp^4$ to
construct an example of an immersion of a smooth curve $f \colon C \hookrightarrow \pp^3$ of degree $5$
and genus $1$ with $H^1(N_f(-2)) = 0$.

Finally, in Section~\ref{sec:converses}, we examine each of the cases
in our above theorems where the intersection is not general. In each
of these cases, we work out precisely what the intersection is
(and show that it is not general).

\subsection*{Conventions}

In this paper we make the following conventions:

\begin{itemize}
\item We work over an algebraically closed field of characteristic zero.

\item
A \emph{curve} shall refer to a nodal curve, which is assumed to be connected unless otherwise specified.
\end{itemize}

\subsection*{Acknowledgements}

The author would like to thank Joe Harris for
his guidance throughout this research.
The author would also like to thank Gavril Farkas, Isabel Vogt, and
members of the Harvard and MIT mathematics departments
for helpful conversations;
and to acknowledge the generous
support both of the Fannie and John Hertz Foundation,
and of the Department of Defense
(NDSEG fellowship).

\section{Normal Bundles of Maps from Reducible Curves \label{sec:reducible}}

In order to describe the normal bundle of a map from a reducible curve,
it will be helpful to introduce some notions concerning modifications
of vector bundles.
The interested reader is encouraged to consult \cite{firstpaper} (sections 2, 3, and~5),
where these notions are developed in full; we include here only a brief summary, which will
suffice for our purposes.

\begin{defi}
If $f \colon X \to \pp^r$ is a map from a scheme $X$ to $\pp^r$,
and $p \in X$ is a point, we write $[T_p C] \subset \pp^r$
for the \emph{projective realization of the tangent space} --- i.e.\ for the
linear subspace $L \subset \pp^r$ containing $f(p)$ and satisfying
$T_{f(p)} L = f_*(T_p C)$.
\end{defi}

\begin{defi} Let $\Lambda \subset \pp^r$ be a linear subspace, and $f \colon C \to \pp^r$
be an unramified map from a curve.
Write $U_{f, \Lambda} \subset C$ for the open subset of points $p \in C$ so that
the projective realization of the tangent space $[T_p C]$ does not meet $\Lambda$. Suppose that $U_{f, \Lambda}$
is nonempty, and contains the singular locus of $C$. Define
\[N_{f \to \Lambda}|_{U_{f, \Lambda}} \subset N_f|_{U_{f, \Lambda}}\]
as the kernel of the differential of the projection from $\Lambda$
(which is regular on a neighborhood of $f(U_{f, \Lambda})$).
We then let $N_{f \to \Lambda}$ be the unique extension of $N_{f \to \Lambda}|_{U_{f, \Lambda}}$
to a sub-vector-bundle (i.e.\ a subsheaf with locally free quotient) of $N_f$ on $C$.
For a more thorough discussion of this construction (written for $f$ an immersion
but which readily generalizes),
see Section~5 of \cite{firstpaper}.
\end{defi}

\begin{defi} Given a subbundle $\mathcal{F} \subset \mathcal{E}$ of a vector bundle on a scheme $X$,
and a Cartier divisor $D$ on $X$, we define
\[\mathcal{E}[D \to \mathcal{F}]\]
as the kernel of the natural map
\[\mathcal{E} \to (\mathcal{E} / \mathcal{F})|_D.\]
Note that $\mathcal{E}[D \to \mathcal{F}]$ is naturally isomorphic to $\mathcal{E}$
on $X \smallsetminus D$. Additionally, note that $\mathcal{E}[D \to \mathcal{F}]$
depends only on $\mathcal{F}|_D$.
For a more thorough discussion of this construction, see Sections~2 and~3 of \cite{firstpaper}.
\end{defi}

\begin{defi}
Given a subspace $\Lambda \subset \pp^r$, an unramified map $f \colon C \to \pp^r$ from a curve, and a Cartier divisor $D$ on $C$,
we define
\[N_f[D \to \Lambda] := N_f[D \to N_{f \to \Lambda}].\]
\end{defi}

We note that these constructions can be iterated on a smooth curve: Given subbundles $\mathcal{F}_1, \mathcal{F}_2 \subset \mathcal{E}$
of a vector bundle on a smooth curve,
there is a unique subbundle $\mathcal{F}_2' \subset \mathcal{E}[D_1 \to \mathcal{F}_1]$
which agrees with $\mathcal{F}_2$ away from $D_1$ (c.f.\ Proposition~3.1 of \cite{firstpaper}).
We may then define:
\[\mathcal{E}[D_1 \to \mathcal{F}_1][D_2 \to \mathcal{F}_2] := \mathcal{E}[D_1 \to \mathcal{F}_1][D_2 \to \mathcal{F}_2'].\]

Basic properties of this construction (as well as precise conditions when such iterated modifications
make sense for higher-dimensional
varieties) are investigated in \cite{firstpaper} (Sections~2 and~3).
For example, we have natural isomorphisms $\mathcal{E}[D_1 \to \mathcal{F}_1][D_2 \to \mathcal{F}_2] \simeq \mathcal{E}[D_2 \to \mathcal{F}_2][D_1 \to \mathcal{F}_1]$
in several cases, including when $\mathcal{F}_1 \subseteq \mathcal{F}_2$.

Using these constructions, we may give a partial characterization of the
normal bundle $N_f$ of an unramified map from a reducible curve $f \colon X \cup_\Gamma Y \to \pp^r$:

\begin{prop}[Hartshorne-Hirschowitz]
Let $f \colon X \cup_\Gamma Y \to \pp^r$ be an unramified map from a reducible curve.
Write $\Gamma = \{p_1, p_2, \ldots, p_n\}$,
and for each $i$ let $q_i \neq f(p_i)$ be a point on the projective realization
$[T_{p_i} Y]$ of the tangent space to $Y$ at $p_i$. Then we have
\[N_f|_X = N_{f|_X}(\Gamma)[p_1 \to q_1][p_2 \to q_2] \cdots [p_n \to q_n].\]
\end{prop}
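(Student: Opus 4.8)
The plan is to work locally around each node $p_i \in \Gamma$ and compare the normal bundle $N_f$ of the reducible curve with the normal bundle $N_{f|_X}$ of the restricted map, away from and at the nodes. Away from $\Gamma$, the curve $X \cup_\Gamma Y$ coincides with $X$, so $N_f|_{X \smallsetminus \Gamma} = N_{f|_X}|_{X \smallsetminus \Gamma}$ canonically, and the entire content is a local computation at each $p_i$. Since all the modifications $(\Gamma)$, $[p_i \to q_i]$ are supported at the individual points $p_i$ and are pairwise ``independent'' (they live at distinct points, so the iteration of the previous constructions is unambiguous and order does not matter), it suffices to fix one node $p = p_i$ and show, in a neighborhood of $p$ in $X$, that $N_f|_X \cong N_{f|_X}(p)[p \to q_i]$, where $q_i$ is any point $\neq f(p)$ on $[T_p Y]$.

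The key steps, in order: (1) Set up the normal bundle via the conormal sequence. For the reducible curve $Z = X \cup_\Gamma Y$ with $f$ unramified, $N_f = \ker(f^*\Omega_{\pp^r} \to \Omega_Z)^\vee$; near the node $p$, $\Omega_Z$ fits in the standard exact sequence relating $\Omega_Z$ to $\Omega_X \oplus \Omega_Y$ and the skyscraper at $p$ (the node contributes a torsion/gluing term). Dualize carefully, tracking that $Z$ is nodal so $\Omega_Z$ is not locally free at $p$. (2) Restrict to $X$: tensoring the defining sequence for $N_f$ with $\oo_X$ and taking the relevant kernel, one finds that $N_f|_X$ differs from $N_{f|_X}$ by a modification at $p$. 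The ``$(\Gamma)$'' twist up by $p$ comes from the fact that a section of $N_f$ near $p$, when restricted to $X$, is allowed a simple pole controlled by the node — concretely, the local generator of $\Omega_Z$ at the node forces this. (3) Identify the direction of the modification. Among the sections of $N_{f|_X}(p)$ with a pole at $p$, the ones that extend to genuine sections of $N_f|_X$ are exactly those whose polar part points ``along $Y$'' — i.e.\ whose residue at $p$, viewed in the fiber $N_{f|_X}|_p$, lies in the image of the tangent direction $T_p Y$. This is precisely the subbundle $N_{f|_X \to q}$ evaluated at $p$: by definition of $N_{f\to\Lambda}$ with $\Lambda = \{q\}$ a point on $[T_pY]$ distinct from $f(p)$, the fiber at $p$ picks out the directions killed by projection from $q$, and these are the directions spanned by $f(p)$ and $q$, hence by $T_pY$. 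So the allowed polar parts are cut out by $N_{f|_X \to q}|_p$, which is exactly what $N_{f|_X}(p)[p \to N_{f|_X \to q}] = N_{f|_X}(p)[p \to q]$ records. (4) Since $N_f[D \to \mathcal{F}]$ depends only on $\mathcal{F}|_D$, the choice of $q$ on $[T_pY] \smallsetminus \{f(p)\}$ is immaterial, and the independence of the modifications at distinct $p_i$ lets us iterate over all of $\Gamma$, yielding the stated formula.

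The main obstacle is step (3): pinning down exactly which one-dimensional subspace of the fiber $N_{f|_X}|_p$ the allowed polar parts occupy, and checking the sign/normalization so that it is literally $N_{f|_X \to q}|_p$ and not, say, its annihilator or a shift thereof. This requires a careful local-coordinate computation at the node — choosing coordinates on $\pp^r$ adapted to $f(p)$, $T_pX$, and $T_pY$, writing down generators of $f^*\Omega_{\pp^r}$, $\Omega_Z$, $\Omega_X$ near $p$, and tracing a general section of $N_f$ through the restriction map. Everything else (the reduction to a single node, the commutativity and well-definedness of the iterated modifications, the independence of $q$) is either routine or already supplied by the cited results from \cite{firstpaper}. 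I would also remark that this statement is essentially the reducible-curve normal bundle description of Hartshorne–Hirschowitz, recast in the modification language of \cite{firstpaper}, so in the write-up I would either give the local computation in full or cite the original source and indicate the translation.
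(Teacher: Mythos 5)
Your outline is sound, but it is worth knowing that the paper does not prove this statement at all: its ``proof'' is a one-line citation to Corollary~3.2 of Hartshorne--Hirschowitz, together with the remark that their argument (stated there for $r=3$ and $f$ an immersion) goes through verbatim for arbitrary $r$ and unramified $f$. What you propose --- reducing to a single node, comparing $N_f|_X$ with $N_{f|_X}$ via the conormal sequence, and identifying the allowed polar parts at $p_i$ with the line $N_{f|_X\to q_i}|_{p_i}$, i.e.\ the image of $T_{p_i}Y$ in $T_{p_i}\pp^r/f_*T_{p_i}X$ --- is essentially the content of that cited proof, translated into the modification language of \cite{firstpaper}. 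Your identification in step~(3) is the correct one (and a useful sanity check you could add: the degrees match, since twisting up by $\Gamma$ adds $(r-1)n$ and each $[p_i\to q_i]$ subtracts $r-2$, for a net $+n$, which is the expected contribution of $n$ nodes to $\deg N_f|_X$). The one honest caveat is that your step~(3) is a statement of what must be computed rather than the computation itself; as you acknowledge, a complete write-up would either carry out that local-coordinate analysis at the node or, as the paper does, cite \cite{hh} and explain the translation. Either resolution is acceptable; as written, your proposal is a correct plan whose only missing piece is the deferred local verification.
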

\begin{proof}
This is Corollary~3.2 of \cite{hh}, re-expressed in the above
language. (Hartshorne and Hirschowitz states this only for $r = 3$
and $f$ an immersion; but the argument they give works for $r$ arbitrary.)
\end{proof}

Our basic strategy to study the normal bundle of an unramified map from a
reducible curve $f \colon C \cup_\Gamma D \to \pp^r$
is given by the following lemma:

\begin{lm} \label{glue}
Let $f \colon C \cup_\Gamma D \to \pp^r$ be an unramified map from a reducible curve,
and let $E$ and $F$ be
divisors supported on $C \smallsetminus \Gamma$ and $D \smallsetminus \Gamma$
respectively.
Suppose that the natural map
\[\alpha \colon H^0(N_{f|_D}(-F)) \to \bigoplus_{p \in \Gamma} \left(\frac{T_p (\pp^r)}{f_* (T_p (C \cup_\Gamma D))}\right)\]
is surjective (respectively injective), and that
\begin{gather*}
H^1(N_f|_D (-F)) = 0 \quad \text{(respectively } H^0(N_f|_D (-F)) = H^0(N_{f|_D} (-F))\text{)} \\
H^1(N_{f|_C} (-E)) = 0 \quad \text{(respectively } H^0(N_{f|_C} (-E)) = 0\text{)}.
\end{gather*}
Then we have
\[H^1(N_f(-E-F)) = 0  \quad \text{(respectively } H^0(N_f(-E-F)) = 0\text{)}.\]
\end{lm}
\begin{proof}
Write $\mathcal{K}$ for the sheaf supported along $\Gamma$ whose
stalk at $p \in \Gamma$ is the quotient of tangent spaces:
\[\mathcal{K}_p = \frac{T_p(\pp^r)}{f_*(T_p(C \cup_\Gamma D))}.\]
Additionally, write $\mathcal{N}$ for the (not locally-free) subsheaf of $N_f$
``corresponding to deformations which do not smooth the nodes $\Gamma$''; or in
symbols, as the kernel of the natural map
\[N_f \to T^1_\Gamma,\]
where $T^1$ is the Lichtenbaum-Schlessinger $T^1$-functor.
We have the following exact sequences of sheaves:
\[\begin{CD}
0 @>>> \mathcal{N} @>>> N_f @>>> T^1_\Gamma @>>> 0 \\
@. @VVV @VVV @| @. \\
0 @>>> N_{f|_D} @>>> N_f|_D @>>> T^1_\Gamma @>>> 0 \\
@. @. @. @. @. \\
0 @>>> \mathcal{N} @>>> N_{f|_C} \oplus N_{f|_D} @>>> \mathcal{K} @>>> 0. \\
\end{CD}\]

The first of sequence above is just the definition of $\mathcal{N}$.
Restriction of the first sequence to~$D$ yields the second sequence
(we have $\mathcal{N}|_D \simeq N_{f|_D}$);
the map between them being of course the restriction map.
The final sequence expresses $\mathcal{N}$ as the gluing of $\mathcal{N}|_C \simeq N_{f|_C}$
to $\mathcal{N}|_D \simeq N_{f|_D}$ along $\mathcal{N}|_\Gamma \simeq \mathcal{K}$.

Twisting everything in sight by $-E-F$, we obtain new sequences:
\[\begin{CD}
0 @>>> \mathcal{N}(-E-F) @>>> N_f(-E-F) @>>> T^1_\Gamma @>>> 0 \\
@. @VVV @VVV @| @. \\
0 @>>> N_{f|_D}(-F) @>>> N_f|_D(-F) @>>> T^1_\Gamma @>>> 0 \\
@. @. @. @. @. \\
0 @>>> \mathcal{N}(-E-F) @>>> N_{f|_C}(-E) \oplus N_{f|_D}(-F) @>>> \mathcal{K} @>>> 0. \\
\end{CD}\]

The commutativity of the rightmost square in the first diagram implies that
the image of $H^0(N_f(-E-F)) \to H^0(T^1_\Gamma)$
is contained in the image of $H^0(N_f|_D(-F)) \to H^0(T^1_\Gamma)$.
Consequently, we have
\begin{align}
\dim H^0(N_f(-E-F)) &= \dim H^0(\mathcal{N}(-E-F)) + \dim \im\left(H^0(N_f(-E-F)) \to H^0(T^1_\Gamma)\right) \nonumber \\
&\leq \dim H^0(\mathcal{N}(-E-F)) + \dim \im\left(H^0(N_f|_D(-F)) \to H^0(T^1_\Gamma)\right) \nonumber \\
&= \dim H^0(\mathcal{N}(-E-F)) + \dim H^0(N_f|_D(-F)) - \dim H^0(N_{f|_D}(-F)). \label{glue-dim}
\end{align}

Next, our assumption that $H^0(N_{f|_D}(-F)) \to H^0(\mathcal{K})$ is surjective
(respectively our assumptions that $H^0(N_{f|_C}(-E)) = 0$ and $H^0(N_{f|_D}(-F)) \to H^0(\mathcal{K})$ is injective) implies
in particular that $H^0(N_{f|_C}(-E) \oplus N_{f|_D}(-F)) \to H^0(\mathcal{K})$ is surjective (respectively injective).

In the ``respectively'' case, this yields $H^0(\mathcal{N}(-E-F)) = 0$, which combined with \eqref{glue-dim}
and our assumption that $H^0(N_f|_D(-F)) = H^0(N_{f|_D}(-F))$ implies $H^0(N_f(-E-F)) = 0$ as desired.
In the other case, we have a bit more work to do; the surjectivity of
$H^0(N_{f|_D}(-F)) \to H^0(\mathcal{K})$ yields
\[\dim H^0(\mathcal{N}(-E-F)) = \dim H^0(N_{f|_C}(-E) \oplus N_{f|_D}(-F)) - \dim H^0(\mathcal{K});\]
or upon rearrangement,
\begin{align*}
\dim H^0(\mathcal{N}(-E-F)) - \dim H^0(N_{f|_D}(-F)) &= \dim H^0(N_{f|_C}(-E)) - \dim H^0(\mathcal{K}) \\
&= \chi(N_{f|_C}(-E)) - \chi(\mathcal{K}).
\end{align*}
(For the last equality, $\dim H^0(N_{f|_C}(-E)) = \chi(N_{f|_C}(-E)) + \dim H^1(N_{f|_C}(-E)) = \chi(N_{f|_C}(-E))$
because $H^1(N_{f|_C}(-E)) = 0$ by assumption. Additionally, 
$\dim H^0(\mathcal{K}) = \chi(\mathcal{K})$
because $\mathcal{K}$ is punctual.)

Substituting this into \eqref{glue-dim}, and noting that
$\dim H^0(N_f|_D(-F)) = \chi(N_f|_D(-F))$ because
$H^1(N_f|_D(-F)) = 0$ by assumption, we obtain:
\begin{align}
\dim H^0(N_f(-E-F)) &\leq \dim H^0(N_f|_D(-F)) + \dim H^0(\mathcal{N}(-E-F)) - \dim H^0(N_{f|_D}(-F)) \nonumber \\
&= \chi(N_f|_D(-F)) + \chi(N_{f|_C}(-E)) - \chi(\mathcal{K}) \nonumber \\
&= \chi(N_f|_D(-F)) + \chi(N_f|_C(-E - \Gamma)) \nonumber \\
&= \chi(N_f(-E - F)). \label{glue-done}
\end{align}
For the final two equalities, we have used the exact sequences of sheaves
\begin{gather*}
0 \to N_f|_C(-E - \Gamma) \to N_{f|_C}(-E) \to \mathcal{K} \to 0 \\[1ex]
0 \to N_f|_C(-E - \Gamma) \to N_f(-E-F) \to N_f|_D(-F) \to 0;
\end{gather*}
which are just twists by $-E-F$ of the exact sequences:
\begin{gather*}
0 \to N_f|_C(-\Gamma) \to N_{f|_C} \to \mathcal{K} \to 0 \\[1ex]
0 \to N_f|_C(-\Gamma) \to N_f \to N_f|_D \to 0.
\end{gather*}

\noindent
To finish, we note that, by \eqref{glue-done},
\[\dim H^1(N_f(-E-F)) = \dim H^0(N_f(-E-F)) - \chi(N_f(-E - F)) \leq 0,\]
and so
$H^1(N_f(-E-F)) = 0$ as desired.
\end{proof}

In the case where $f|_D$ factors through a hyperplane,
the hypotheses of Lemma~\ref{glue} become easier to check:

\begin{lm} \label{hyp-glue}
Let $f \colon C \cup_\Gamma D \to \pp^r$ be an unramified map from a reducible curve,
such that $f|_D$ factors as a composition of $f_D \colon D \to H$ with the inclusion of a hyperplane $\iota \colon H \subset \pp^r$,
while $f|_C$ is transverse to $H$ along $\Gamma$.
Let $E$ and $F$ be
divisors supported on $C \smallsetminus \Gamma$ and $D \smallsetminus \Gamma$
respectively.
Suppose that, for some $i \in \{0, 1\}$,
\[H^i(N_{f_D}(-\Gamma-F)) = H^i(\oo_D(1)(\Gamma-F)) = H^i(N_{f|_C} (-E)) = 0.\]
Then we have
\[H^i(N_f(-E-F)) = 0.\]
\end{lm}
\begin{proof}
If $i = 0$, we note that $H^0(\oo_D(1)(\Gamma - F)) = 0$ implies
$H^0(\oo_D(1)(-F)) = 0$. In particular, using
the exact sequences
\[\begin{CD}
0 @>>> N_{f_D}(-F) @>>> N_{f|_D}(-F) @>>> \oo_D(1)(-F) @>>> 0 \\
@. @| @VVV @VVV @. \\
0 @>>> N_{f_D}(-F) @>>> N_f|_D(-F) @>>> \oo_D(1)(\Gamma - F) @>>> 0,
\end{CD}\]
we conclude from the first sequence that
$H^0(N_{f_D}(-F)) \to H^0(N_{f|_D}(-F))$ is an isomorphism, and
from the $5$-lemma applied to the corresponding map
between long exact sequences that $H^0(N_{f|_D}(-F)) = H^0(N_f|_D(-F))$.

Similarly, when $i = 1$, we note that
$H^1(N_{f_D}(-\Gamma-F)) = 0$ implies $H^1(N_{f_D}(-F)) = 0$;
we thus conclude from the second sequence that $H^1(N_f|_D(-F)) = 0$.

It thus remains to check that the map $\alpha$ in Lemma~\ref{glue}
is injective if $i = 0$ and surjective if $i = 1$. For this we use
the commutative diagram
\[\begin{CD}
\displaystyle H^0(N_{f_D}(-F)) @>\beta>> N_{f_D}|_\Gamma \simeq \displaystyle \bigoplus_{p \in \Gamma} \left(\frac{T_p H}{f_*(T_p D)}\right) \\
@VgVV @VV{\iota_*}V \\
\displaystyle H^0(N_{f|_D}(-F)) @>\alpha>> \displaystyle \bigoplus_{p \in \Gamma} \left(\frac{T_p (\pp^r)}{f_*(T_p (C \cup_\Gamma D))}\right).
\end{CD}\]
Since $f|_C$ is transverse to $H$ along $\Gamma$, the
map $\iota_*$ above is an isomorphism. In particular,
since $g$ is an isomorphism when $i = 0$, it suffices to check
that $\beta$ is injective if $i = 0$ and surjective if $i = 1$.
But using the exact sequence
\[0 \to N_{f_D}(-\Gamma-F) \to N_{f_D}(-F) \to N_{f_D}|_\Gamma \to 0,\]
this follows from our assumption that $H^i(N_{f_D}(-\Gamma-F)) = 0$.
\end{proof}

\section{Interpolation \label{sec:inter}}

If we generalize $N_f(-n)$ to $N_f(-D)$, where $D$ is a general effective divisor,
we get the problem of ``interpolation.'' Geometrically, this corresponds to
asking if there is a curve of degree $d$ and genus $g$ which passes through a
collection of points which are general in $\pp^r$
(as opposed to general in a hypersurface $S$).
This condition is analogous in some sense to the conditions
of semistability and section-semistability
(see Section~3 of~\cite{nasko}), as well as to the 
Raynaud condition (property $\star$ of \cite{raynaud});
although we shall not make use of these analogies here.

\begin{defi} \label{def:inter} We say a vector bundle $\mathcal{E}$ on a curve $C$ \emph{satisfies interpolation}
if it is nonspecial, and for a general effective divisor $D$ of any degree,
\[H^0(\mathcal{E}(-D)) = 0 \tor H^1(\mathcal{E}(-D)) = 0.\]
\end{defi}

We have the following results on interpolation from \cite{firstpaper}.
To rephrase them in our current language,
note that if $f \colon C \to \pp^r$ is a general BN-curve for $r \geq 3$, then $f$ is an immersion,
so $N_f$ coincides with the normal bundle $N_{f(C)/\pp^r}$ of the image.
Note also that, from Brill--Noether theory,
a general BN-curve $f \colon C \to \pp^r$ of degree $d$ and genus $g$
is nonspecial (i.e.\ satisfies $H^1(f^* \oo_{\pp^r}(1)) = 0$) if and only if
$d \geq g + r$.

\begin{prop}[Theorem~1.3 of~\cite{firstpaper}] \label{inter}
Let $f \colon C \to \pp^r$ (for $r \geq 3$) be a general BN-curve of degree $d$ and genus $g$, where
\[d \geq g + r.\]
Then $N_f$ satisfies interpolation, unless
\[(d, g,r) \in \{(5,2,3), (6,2,4), (7,2,5)\}.\]
\end{prop}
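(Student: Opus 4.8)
The plan is to reduce the statement to a collection of base cases by an inductive degeneration argument, and then to verify the base cases by a mix of explicit constructions and the tools for reducible curves developed in Section~\ref{sec:reducible}. The inductive step proceeds by specializing $f \colon C \to \pp^r$ to a map from a reducible curve $X \cup_\Gamma Y \to \pp^r$, where $Y$ is either a line (so $f|_Y$ has degree $1$ and genus $0$) or a rational normal curve (nonspecial of degree $r$), glued to $X$ along a set $\Gamma$ of $\#\Gamma$ general points. If $f|_X$ is a general BN-curve of degree $d - \deg(f|_Y)$ and genus $g - \#\Gamma + 1$ (or, in the case $Y$ a line meeting $X$ quasi-transversally, an appropriate nearby genus), and if $X \cup_\Gamma Y$ is again a BN-curve — for which we invoke the results reviewed in Section~\ref{sec:rbn} — then Lemma~\ref{glue}, applied with $E$ and $F$ general effective divisors on $X \setminus \Gamma$ and $Y \setminus \Gamma$ of the appropriate degrees, lets us deduce interpolation for $N_f$ from interpolation for $N_{f|_X}$ together with the (easy) vanishing statements for the normal bundle of a line or rational normal curve twisted down by points. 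The inductive hypothesis is that $N_{f|_X}$ satisfies interpolation, which holds by induction on $d$ provided $(d - \deg f|_Y, g - \#\Gamma + 1, r)$ is not one of the three excluded triples and satisfies $d' \geq g' + r$; one checks that the excluded triples for $N_f$ are exactly those for which \emph{every} such reduction lands in an excluded or out-of-range case, so they are genuinely the only obstructions to the induction closing.

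Carrying this out, the first step is to set up the numerology: for each $r$, determine for which $(d, g)$ with $d \geq g + r$ one can peel off a line or a rational normal curve and stay in range, and identify the finite set of $(d, g)$ — depending on $r$ — that are not reachable from a smaller case. This finite set will consist of curves of small degree relative to $g + r$, i.e.\ curves that are ``barely nonspecial''. The second step is to dispatch these finitely many base cases. For most of them I would again use a reducible-curve degeneration, but now one tailored to the specific case (for instance gluing on a conic, or using a curve in a surface), invoking Lemma~\ref{glue} or, when a component factors through a hyperplane, the more convenient Lemma~\ref{hyp-glue}. The genuinely small cases — and in particular the three exceptional triples $(5,2,3)$, $(6,2,4)$, $(7,2,5)$, where interpolation must be \emph{shown to fail} — require direct analysis: for $(5,2,3)$ a degree~$5$ genus~$2$ curve in $\pp^3$ lies on a cubic scroll (or one computes $N_f$ from the surface normal bundle sequence), and one finds $h^1(N_f(-D)) > 0$ and $h^0(N_f(-D)) > 0$ simultaneously for the relevant $D$; the $r = 4$ and $r = 5$ cases follow by a projection/cone relationship from $r = 3$.

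The main obstacle I anticipate is the bookkeeping that guarantees the induction actually closes — namely, proving that from any $(d, g, r)$ with $d \geq g + r$, $r \geq 3$, and $(d,g,r)$ not in the excluded list, there is at least one admissible peeling (line or rational normal curve) landing at a case that is itself either a verified base case or covered by the inductive hypothesis, \emph{and} for which the reducible curve $X \cup_\Gamma Y$ is a BN-curve of the correct degree and genus. The subtlety is that the condition $d' \geq g' + r$ and the requirement of avoiding the three bad triples at the \emph{smaller} stage interact: near the boundary $d = g + r$ the only admissible moves change $(d,g)$ in a constrained way, and one must check no chain of such moves is forced through an excluded triple. I would handle this by an explicit induction on $d$ with a carefully chosen well-ordering, treating a small number of ``boundary'' families by hand. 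The auxiliary vanishing inputs (that $N_\ell(-k)$ for a line $\ell$ and $N_{\mathrm{RNC}}(-k)$ have no $H^1$, resp.\ no $H^0$, in the relevant range, and the genus/BN-ness of the glued curves) are standard and can be cited from \cite{firstpaper} and Section~\ref{sec:rbn}.
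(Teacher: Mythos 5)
This statement is not proved in the paper at all: it is quoted verbatim as Theorem~1.3 of \cite{firstpaper}, and the present paper simply imports it as a black box. So the relevant comparison is between your sketch and a complete proof, and as written your proposal is a plan rather than a proof, with the hard parts explicitly deferred. Concretely: (i) you acknowledge that the ``main obstacle'' --- showing that from every admissible $(d,g,r)$ some peeling move lands in a case covered by the induction, and that no chain is forced through an excluded triple --- is unresolved; this bookkeeping is precisely where the content of such a theorem lives, and without it nothing is proved. (ii) Lemma~\ref{glue} does not directly give interpolation: interpolation requires $H^0(\mathcal{E}(-D))=0$ \emph{or} $H^1(\mathcal{E}(-D))=0$ for general $D$ of \emph{every} degree, which forces you to run both halves of Lemma~\ref{glue} with the degrees of $E$ and $F$ distributed between the two components so that the crossover degree is hit on each side, and to verify the surjectivity (resp.\ injectivity) of the gluing map $\alpha$ in each regime. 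You do not address either the distribution of twists or the verification of the $\alpha$-hypothesis, and the latter is a genuine geometric condition on how $Y$ meets $X$, not a formality.

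(iii) The three exceptional triples require a proof that interpolation \emph{fails}, not merely that your induction does not reach them; ``one finds $h^1>0$ and $h^0>0$ simultaneously'' is the statement to be proved, not an argument. Your suggestion that $(6,2,4)$ and $(7,2,5)$ ``follow by a projection/cone relationship from $r=3$'' is not substantiated and is dubious as stated: the right common mechanism (visible in Lemma~\ref{g2} of this paper) is that all three curves lie on rational surface scrolls, and the obstruction comes from the resulting subbundle $N_{C/S}\subset N_C$ of the wrong slope, which is a different argument from projecting or coning the $\pp^3$ case. In short, the outline points in the same general direction as the degeneration strategy of \cite{firstpaper}, but each of the three load-bearing steps --- closure of the induction, the two-sided application of the gluing lemma needed for interpolation, and the failure analysis in the exceptional cases --- is missing.
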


\begin{prop}[Proposition~4.12 of~\cite{firstpaper}] \label{twist}
Let $\mathcal{E}$ be a vector bundle on a curve $C$, and $D$ be a divisor on $C$.
If $\mathcal{E}$ satisfies interpolation and
\[\chi(\mathcal{E}(-D)) \geq (\rk \mathcal{E}) \cdot (\operatorname{genus} C),\]
then $\mathcal{E}(-D)$ satisfies interpolation. In particular,
\[H^1(\mathcal{E}(-D)) = 0.\]
\end{prop}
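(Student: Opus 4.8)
The plan is to verify the two conditions of Definition~\ref{def:inter} for $\mathcal{E}(-D)$, writing throughout $n = \rk \mathcal{E}$ and $g = \operatorname{genus} C$. First note that the hypothesis $\chi(\mathcal{E}(-D)) \geq ng$ is, by Riemann--Roch, equivalent to $\deg \mathcal{E}(-D) \geq n(2g-1)$, i.e.\ to the slope bound $\mu(\mathcal{E}(-D)) \geq 2g - 1$ --- the vector-bundle analogue of the fact that a line bundle of degree $\geq 2g-1$ is nonspecial. So we must show (i) $H^1(\mathcal{E}(-D)) = 0$, which is the nonspeciality clause and also the ``in particular'' assertion, and (ii) that for a general effective divisor $D'$ of any degree, $H^0(\mathcal{E}(-D-D')) = 0$ or $H^1(\mathcal{E}(-D-D')) = 0$. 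It is convenient to first rephrase the interpolation of $\mathcal{E}$ as: $\mathcal{E}$ is nonspecial, and for $\Delta$ a general effective divisor of any degree, $H^1(\mathcal{E}(-\Delta)) = 0$ when $\chi(\mathcal{E}(-\Delta)) \geq 0$ while $H^0(\mathcal{E}(-\Delta)) = 0$ when $\chi(\mathcal{E}(-\Delta)) < 0$ (immediate from $h^0 \geq \max(\chi, 0)$ and $h^1 \geq \max(-\chi, 0)$).

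The key point --- and the main obstacle --- is that $D + D'$ is never a \emph{general} divisor of its degree, so interpolation of $\mathcal{E}$ cannot be applied to $\mathcal{E}(-D-D')$ directly; in particular a naive semicontinuity argument fails, since specializing a general twist to $D$ can only make $h^0$ and $h^1$ jump up. The hypothesis $\chi(\mathcal{E}(-D)) \geq ng$ must therefore be fed in through a consequence of interpolation that is stable under the (non-generic) twist by $\oo_C(-D)$. The one I would use is a semistability-type bound: \emph{if $\mathcal{E}$ satisfies interpolation then every quotient line bundle $L$ of $\mathcal{E}$ has $\deg L \geq \lfloor \mu(\mathcal{E}) \rfloor$.} (If this is not already available in \cite{firstpaper} it is the first lemma I would prove: a quotient line bundle $L$ with $\deg L < \lfloor\mu(\mathcal{E})\rfloor$ is nonspecial, being a quotient of the nonspecial bundle $\mathcal{E}$, and for a general $\Delta$ of integer degree $k \in (\chi(L),\, \mu(\mathcal{E})+1-g]$ --- such $k$ exists precisely because $\deg L < \lfloor\mu(\mathcal{E})\rfloor$ --- one has $H^1(L(-\Delta)) \neq 0$ while $\chi(\mathcal{E}(-\Delta)) \geq 0$, so the surjection $\mathcal{E}(-\Delta) \twoheadrightarrow L(-\Delta)$ forces $H^1(\mathcal{E}(-\Delta)) \neq 0$, contradicting interpolation.)

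Granting this bound, $H^1(\mathcal{E}(-D)) = 0$ follows at once: the quotient line bundles of $\mathcal{E}(-D)$ are exactly the $L(-D)$ for $L$ a quotient line bundle of $\mathcal{E}$, so each has degree $\geq \lfloor\mu(\mathcal{E})\rfloor - \deg D = \lfloor\mu(\mathcal{E}(-D))\rfloor \geq 2g - 1 > 2g - 2 = \deg K_C$; by Serre duality $H^1(\mathcal{E}(-D)) = H^0\big(\mathcal{E}(-D)^\vee \otimes K_C\big)^\vee$, and a nonzero section here is a nonzero map $\mathcal{E}(-D) \to K_C$ whose image is a quotient line bundle of $\mathcal{E}(-D)$ of degree $\leq 2g-2$, which we have just excluded. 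The same bound together with $\mu(\mathcal{E}(-D)) \geq 2g-1$ shows that $\mathcal{E}(-D)$ is ``balanced enough'' that general points impose independent conditions on its sections: for a general effective $D'$ of degree $k$ the evaluation map $H^0(\mathcal{E}(-D)) \to \bigoplus_{p \in D'} \mathcal{E}(-D)|_p$ has rank $\min\big(h^0(\mathcal{E}(-D)),\, nk\big)$. (This is the second standard input I would invoke; it genuinely uses the semistability-type bound, as the example $\oo_C \oplus L$ with $\deg L \gg 0$ shows.)

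Condition (ii) is now a chase in the exact sequence $0 \to \mathcal{E}(-D-D') \to \mathcal{E}(-D) \to \bigoplus_{p \in D'} \mathcal{E}(-D)|_p \to 0$ for $D'$ general of degree $k$. If $\chi(\mathcal{E}(-D-D')) \geq 0$, equivalently $nk \leq h^0(\mathcal{E}(-D))$, the evaluation map is surjective, and since $H^1(\mathcal{E}(-D)) = 0$ the long exact sequence gives $H^1(\mathcal{E}(-D-D')) = 0$. If $\chi(\mathcal{E}(-D-D')) < 0$, equivalently $nk > h^0(\mathcal{E}(-D))$, the evaluation map has rank $h^0(\mathcal{E}(-D))$, so $H^0(\mathcal{E}(-D-D')) = 0$. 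Either way $\mathcal{E}(-D-D')$ has the expected cohomology, so $\mathcal{E}(-D)$ satisfies interpolation. An alternative packaging, if one prefers not to cite the semistability bound as a black box, is induction on $\deg D$ (taking $D$ effective, to which the general case reduces): writing $D = D_0 + p$ one has $\chi(\mathcal{E}(-D_0)) \geq ng + n$, so $\mathcal{E}(-D_0)$ interpolates by induction and only the single point $p$ remains --- but the single-point step still rests on exactly the global-generation/evaluation-rank facts above, so this reorganization does not circumvent the real work.
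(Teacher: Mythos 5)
The paper does not prove this statement --- it is quoted as Proposition~4.12 of \cite{firstpaper} --- so your argument can only be judged on its own terms. On those terms, the part of your argument establishing the ``in particular'' clause is sound: your lemma that interpolation forces every quotient line bundle $L$ of $\mathcal{E}$ to have $\deg L \geq \lfloor\mu(\mathcal{E})\rfloor$ is correctly deduced (the surjection $H^1(\mathcal{E}(-\Delta)) \to H^1(L(-\Delta))$ plus $\chi(\mathcal{E}(-\Delta)) \geq 0$ does contradict interpolation), and combining it with Serre duality does give $H^1(\mathcal{E}(-D)) = 0$, since any nonzero map $\mathcal{E}(-D) \to K_C$ would produce a quotient line bundle of degree $\leq 2g-2 < \lfloor\mu(\mathcal{E}(-D))\rfloor$.

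The gap is your ``second standard input.'' For a nonspecial bundle, the assertion that a general effective $D'$ of degree $k$ imposes $\min(h^0, nk)$ independent conditions on $H^0(\mathcal{E}(-D))$ \emph{is} the statement that $\mathcal{E}(-D)$ satisfies interpolation --- it is not an auxiliary fact but the entire content of the proposition, and you invoke it without proof. Nor does it plausibly follow from the quotient-line-bundle bound together with $\mu(\mathcal{E}(-D)) \geq 2g-1$: for $\rk\mathcal{E} \geq 3$ that bound says nothing about quotients of intermediate rank, and it is a destabilizing subsheaf of higher rank (equivalently, a low-degree quotient of rank $\geq 2$) that would make $h^0(\mathcal{E}(-D-D'))$ too large; so the principle you cite is at best unproved and at worst false as stated. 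The actual mechanism behind the hypothesis $\chi(\mathcal{E}(-D)) \geq (\rk\mathcal{E})\cdot g$ is different and does not pass through slopes at all: it guarantees that $\lfloor\chi(\mathcal{E}(-D))/n\rfloor \geq g$, so that for $F$ a \emph{general} effective divisor of degree $\deg D + j$ with $j \geq g$, the class $F - D$ has degree $\geq g$ and hence is represented by an effective divisor $\Delta_j$; then $\mathcal{E}(-D-\Delta_j) \simeq \mathcal{E}(-F)$ has the cohomology prescribed by interpolation of $\mathcal{E}$. In other words, one trades the non-general twist by $D$ for a general one at a cost of $g$ in degree (i.e.\ $ng$ in Euler characteristic), and then appeals to the criterion that interpolation is detected by two such twists (of degrees $\lfloor\chi/n\rfloor$ and $\lceil\chi/n\rceil$) together with monotonicity of $h^0$ under subtracting general points. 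Your proposal contains no substitute for this trade, and your closing remark concedes that the inductive repackaging does not supply one.
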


\begin{lm} \label{g2} Let $f \colon C \to \pp^r$ (for $r \in \{3, 4, 5\}$)
be a general BN-curve of degree $r + 2$ and genus $2$.
Then $H^1(N_f(-1)) = 0$.
\end{lm}
\begin{proof}
We will show that there exists
an immersion $C \hookrightarrow \pp^r$, which is a BN-curve of degree $r + 2$ and genus $2$, and whose image
meets a hyperplane $H$ transversely
in a general collection of $r + 2$ points. For this, we first find a rational normal
curve $R \subset H$ passing through $r + 2$ general points, which is possible
by Corollary~1.4 of~\cite{firstpaper}.
This rational
normal curve is then the hyperplane section of some rational surface scroll $S \subset \pp^r$
(and we can freely choose the projective equivalence class of $S$).

It thus suffices to prove that there exists a smooth curve $C \subset S$,
for which $C \subset S \subset \pp^r$ is a BN-curve of degree $r + 2$ and genus $2$,
such that $C \cap (H \cap S)$ a set of $r + 2$ general points on $H \cap S$;
or alternatively such that the map
\[C \mapsto (C \cap (H \cap S)),\]
from the Hilbert scheme of curves on $S$, to the Hilbert scheme of points
on $H \cap S$,
is smooth at $[C]$; this in turn would follow from
$H^1(N_{C/S}(-1)) = 0$.

But by Corollary~13.3 of \cite{firstpaper}, the general BN-curve $C' \subset \pp^r$
(which is an immersion since $r \geq 3$) of degree $r + 2$ and genus $2$
in $\pp^r$ is contained in some rational surface
scroll $S'$, and satisfies $\chi(N_{C'/S'}) = 11$. Since we can choose $S$ projectively
equivalent to $S'$,
we may thus find a BN-curve $C \subset S$ of degree~$r + 2$
and genus~$2$ with $\chi(N_{C/S}) = 11$. But then,
\[\chi(N_{C/S}(-1)) = 11 - d \geq g \quad \Rightarrow \quad H^1(N_{C/S}(-1)) = 0. \qedhere\]
\end{proof}

\noindent
Combining these results, we obtain:

\begin{lm} \label{from-inter} Let $f \colon C \to \pp^r$ (for $r \geq 3$)
be a general BN-curve of degree $d$ and genus $g$.
Suppose that $d \geq g + r$.
\begin{itemize}
\item If $r = 3$ and $g = 0$, then $H^1(N_f(-2)) = 0$. In fact, $N_f(-2)$ satisfies interpolation.
\item If $r = 3$, then $H^1(N_f(-1)) = 0$. In fact, $N_f(-1)$ satisfies interpolation
except when $(d, g) = (5, 2)$.
\item If $r = 4$ and $d \geq 2g$, then $H^1(N_f(-1)) = 0$. In fact, $N_f(-1)$ satisfies interpolation
except when $(d, g) = (6, 2)$.
\end{itemize}
\end{lm}
\begin{proof}
When $(d, g, r) \in \{(5, 2, 3), (6, 2, 4)\}$, the desired result follows from Lemma~\ref{g2}.
Otherwise,
from Propositions~\ref{inter}, we know that $N_f$ satisfies interpolation.
Hence, the desired conclusion follows by applying
Proposition~\ref{twist}: If $r = 3$, then
\begin{align*}
\chi(N_f(-1)) &= 2d \geq 2g = (r - 1) g\\
\chi(N_f(-2)) &= 0 = (r - 1)g;
\end{align*}
and if $r = 4$ and $d \geq 2g$, then
\[\chi(N_f(-1)) = 2d - g + 1 \geq 3g = (r - 1)g. \qedhere \]
\end{proof}

\begin{lm} \label{addone-raw}
Suppose $f \colon C \cup_u L \to \pp^3$ is an unramified map
from a reducible curve, with $L \simeq \pp^1$, and $u$ a single point,
and $f|_L$ of degree~$1$.
Write $v \neq f(u)$ for some other point on $f(L)$. If
\[H^1(N_{f|_C}(-2)(u)[2u \to v]) = 0,\]
then we have
\[H^1(N_f(-2)) = 0.\]
\end{lm}
\begin{proof}
We apply Lemma~8.5 of \cite{firstpaper} (which is stated for $f$ an immersion,
in which case $N_f = N_{C \cup L}$ and $N_{f|_C} = N_C$, but the same proof works
whenever $f$ is unramified); we take $N_C' = N_{f|_C}(-2)$
and $\Lambda_1 = \Lambda_2 = \emptyset$. This implies $N_f(-2)$ satisfies
interpolation (c.f.\ Definition~\ref{def:inter}) provided that $N_{f|_C}(-2)(u)[u \to v][u \to v]$ satisfies interpolation.
But we have
\[\chi(N_f(-2)) = \chi(N_{f|_C}(-2)(u)[u \to v][u \to v]) = 0;\]
so both of these interpolation statements are equivalent to the vanishing of $H^1$.
That is, we have $H^1(N_f(-2)) = 0$, provided that
\[H^1(N_{f|_C}(-2)(u)[u \to v][u \to v]) = H^1(N_{f|_C}(-2)(u)[2u \to v]) = 0,\]
as desired.
\end{proof}

We finish this section with the following proposition,
which immediately implies Theorems~\ref{main-2} and~\ref{main-2-1}:

\begin{prop} \label{p2}
Let $f \colon C \to \pp^2$ be a curve. Then $N_f(-2)$ satisfies interpolation.
In particular $H^1(N_f(-2)) = H^1(N_f(-1)) = 0$.
\end{prop}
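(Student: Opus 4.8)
The plan is to exploit the fact that for a map to $\pp^2$ the normal bundle is a \emph{line bundle}, for which interpolation is essentially automatic once one controls the degree. Concretely, the key observation is that $f^*\Omega_{\pp^2}$ has rank $2$ while $\Omega_C$ has rank $1$, so $\ker(f^*\Omega_{\pp^2}\to\Omega_C)$ — and hence $N_f$ — has rank $1$; and taking determinants in the normal bundle exact sequence $0\to T_C\to f^*T_{\pp^2}\to N_f\to 0$ (valid since $f$ is unramified), together with $\det T_{\pp^2}=\oo_{\pp^2}(3)$, yields the identification
\[N_f\;\cong\;\omega_C\otimes f^*\oo_{\pp^2}(3),\qquad\text{so}\qquad N_f(-2)\cong\omega_C\otimes f^*\oo_{\pp^2}(1),\quad N_f(-1)\cong\omega_C\otimes f^*\oo_{\pp^2}(2).\]

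From here I would argue as follows. By Serre duality $H^1(N_f(-2))\cong H^0(f^*\oo_{\pp^2}(-1))^\vee$; since $f$ is nonconstant on each component of $C$, the line bundle $f^*\oo_{\pp^2}(-1)$ has negative degree on every component and so has no global sections. Hence $H^1(N_f(-2))=0$, i.e.\ $N_f(-2)$ is nonspecial, and likewise $H^1(N_f(-1))\cong H^0(f^*\oo_{\pp^2}(-2))^\vee=0$. To upgrade nonspeciality to interpolation, I would use that for a nonspecial line bundle $L$ one has $h^0(L)=\chi(L)=:\delta\geq 0$, and that a general effective divisor $D$ of degree $e$ satisfies $h^0(L(-D))=\max(0,\delta-e)$ — each newly added general point fails to be a base point of the (nonempty) linear system obtained so far, hence drops $h^0$ by one — so that $h^1(L(-D))=0$ when $e\leq\delta$ and $h^0(L(-D))=0$ when $e\geq\delta$; this is exactly the assertion of Definition~\ref{def:inter}.

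There is no serious obstacle here: the whole argument is one determinant computation plus a Serre duality vanishing. The only step deserving a word of care is the very first one — that $N_f$ is genuinely a line bundle and that the determinant identity holds for the (possibly nodal, possibly reducible) curves permitted by our conventions — but this is precisely the setting in which $N_f$ is developed in \cite{firstpaper}. Alternatively, one can sidestep the reducible case altogether, since a BN-curve to $\pp^2$ is by definition birational onto its image from a \emph{smooth} curve, so for the applications (Theorems~\ref{main-2} and~\ref{main-2-1}) it suffices to treat smooth irreducible $C$, where $\deg N_f(-2)=d+2g-2>2g-2=\deg\omega_C$ already forces $H^1(N_f(-2))=0$ directly.
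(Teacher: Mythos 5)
Your proof is correct and follows essentially the same route as the paper: identify $N_f$ as the line bundle $K_C \otimes f^*\oo_{\pp^2}(3)$ by adjunction, kill $H^1(N_f(-2)) \simeq H^0(f^*\oo_{\pp^2}(-1))^\vee$ by Serre duality, and observe that a nonspecial line bundle automatically satisfies interpolation. The only difference is that you spell out the last step (general points dropping $h^0$ one at a time), which the paper leaves implicit.
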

\begin{proof}
By adjunction,
\[N_f \simeq K_C \otimes f^* K_{\pp^3}^{-1} \simeq K_f(3) \imp N_f(-2) \simeq K_C(1).\]
By Serre duality,
\[H^1(K_C(1)) \simeq H^0(\oo_C(-1))^\vee = 0;\]
which since $K_C(1)$ is a line bundle implies it satisfies interpolation.
\end{proof}

\section{Reducible BN-Curves \label{sec:rbn}}

\begin{defi} Let $\Gamma \subset \pp^r$ be a finite set of $n$ points. A pair
$(f \colon C \to \pp^r, \Delta \subset C_{\text{sm}})$,
where $C$ is a curve, $f$ is map from $C$ to $\pp^r$, and $\Delta$ is a subset of $n$ points on the smooth locus $C_{\text{sm}}$,
shall be called a \emph{marked curve (respectively marked BN-curve, respectively marked WBN-curve) passing through $\Gamma$}
if $f \colon C \to \pp^r$ is a map from a curve (respectively a BN-curve, respectively a WBN-curve) and $f(\Delta) = \Gamma$.

Given a marked curve $(f \colon C \to \pp^r, \Delta)$ passing through $\Gamma$,
we realize $\Gamma$ as a subset of $C$ via
$\Gamma \simeq \Delta \subset C$.

For $p \in \Gamma$,
we then define the \emph{tangent line $T_p (f, \Gamma)$ at $p$} to be the unique line $\ell \subset \pp^r$ through $p$
with $T_p \ell = f_* T_p C$.
\end{defi}

Let $\Gamma \subset \pp^r$ be a finite set of $n$ general points,
and $(f_i \colon C_i \to \pp^r, \Gamma_i)$ be marked WBN-curves passing through $\Gamma$.
We then write $C_1 \cup_\Gamma C_2$ for the curve obtained
from $C_1$ and $C_2$ by gluing 
$\Gamma_1$ to $\Gamma_2$ via the isomorphism $\Gamma_1 \simeq \Gamma \simeq \Gamma_2$.
The maps $f_i$ give rise to a map $f \colon C_1 \cup_\Gamma C_2 \to \pp^r$
from a reducible curve.
Then we have the following result:

\begin{prop}[Theorem~1.3 of \cite{rbn}] \label{prop:glue}
Suppose that, for at least one $i \in \{1, 2\}$, we have
\[(r + 1) d_i - r g_i + r \geq rn.\]
Then
$f \colon C_1 \cup_\Gamma C_2 \to \pp^r$ is a WBN-curve.
\end{prop}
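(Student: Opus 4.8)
The plan is to establish, for the map $f$ from the nodal curve of arithmetic genus $g := g_1 + g_2 + n - 1$ and degree $d := d_1 + d_2$, each of the three defining properties of a WBN-curve: that $[f]$ lies on a component $M \subseteq \bar{M}_g(\pp^r, d)$ which (i)~dominates $\bar{M}_g$, and whose general member is (ii)~a map from a smooth curve, an immersion if $r \geq 3$ and birational onto its image if $r = 2$, and (iii)~nonspecial or nondegenerate. Since the conclusion is needed only for general $f_i$ and is preserved under the relevant specializations, I would assume $f_1$ and $f_2$ are general marked WBN-curves through $\Gamma$; in particular $H^1(N_{f_1}) = H^1(N_{f_2}) = 0$, because a general BN-curve has normal sheaf with vanishing $H^1$ (cf.\ Proposition~\ref{inter}).

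The key step is to prove $H^1(N_f) = 0$. This makes $[f]$ a smooth point of $\bar{M}_g(\pp^r, d)$, so that $[f]$ lies on a unique component $M$, of dimension $\chi(N_f) = (r+1)d - (r-3)(g-1)$; since this is the \emph{interior} expected dimension, $M$ is not contained in the boundary, and hence its general member has smooth source. Furthermore, the same vanishing is the obstruction to extending $f$ over a general one-parameter smoothing of $C_1 \cup_\Gamma C_2$, whose general fiber is a general smooth curve of genus $g$; so the general curve of genus $g$ carries a degree-$d$ map lying in $M$, i.e.\ $M$ dominates $\bar{M}_g$, which is~(i). To prove $H^1(N_f) = 0$ I would run the gluing analysis from the proof of Lemma~\ref{glue} (with $E = F = 0$, taking the two components to be $C_1$ and $C_2$): by the Hartshorne--Hirschowitz description $N_f|_{C_1} = N_{f_1}(\Gamma)[p_1 \to q_1] \cdots [p_n \to q_n]$ (writing $f_1 = f|_{C_1}$), it suffices to check that $H^1(N_{f_2}) = 0$, that $H^1(N_f|_{C_1}) = 0$, and that the evaluation map
\[
H^0(N_{f_1}) \longrightarrow \bigoplus_{p \in \Gamma} \left(\frac{T_p(\pp^r)}{f_*(T_p(C_1 \cup_\Gamma C_2))}\right)
\]
is surjective. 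The first vanishing holds by our standing genericity assumption; the second follows because $N_f|_{C_1}$ is a modification of the positive twist $N_{f_1}(\Gamma)$, to which an interpolation-type vanishing applies, building on Propositions~\ref{inter} and~\ref{twist} (the finitely many exceptional triples of Proposition~\ref{inter} being dispatched separately). The surjectivity is exactly where the numerical hypothesis enters: since $(r+1)d_1 - rg_1 + r = \chi(f_1^* T_{\pp^r})$, the inequality $\chi(f_1^* T_{\pp^r}) \geq rn$ guarantees --- using the generality of $\Gamma$ and interpolation for $f_1^* T_{\pp^r}$ --- that $f_1$ has enough global deformations, even with its source held fixed, to realize an arbitrary first-order displacement of the $n$ points of $\Gamma$ in $\pp^r$; composing with the projection onto the tangent-space quotients above then yields the required surjectivity. (If the hypothesis holds for $i = 2$ instead, one interchanges the roles of the two components.)

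It then remains to verify (ii) and (iii) for the general member of $M$, which is comparatively routine. Being an immersion (for $r \geq 3$) or birational onto its image (for $r = 2$) is an open condition satisfied by $f_1$ and $f_2$, hence --- after smoothing the nodes --- by the general member of $M$; here one also uses that a general deformation of an unramified map from a smooth curve to $\pp^r$ with $r \geq 3$ is automatically an immersion. For (iii): if either $f_i$ is nondegenerate then $f$ is nondegenerate, and hence so is the general member of $M$ by semicontinuity; and if both $f_i$ are degenerate then both are nonspecial, so that a Mayer--Vietoris computation for $f^* \oo_{\pp^r}(1)$ on $C_1 \cup_\Gamma C_2$ --- using that $\Gamma$ general makes the two linear series impose independent conditions at $\Gamma$ --- shows $f$ itself is nonspecial, and semicontinuity again passes this to the general member. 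This completes the verification that $f$ is a WBN-curve.

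The step I expect to be the main obstacle is the vanishing $H^1(N_f) = 0$ of the second paragraph: carrying out the normal-sheaf bookkeeping (keeping track of the correct modifications and twists), verifying the auxiliary vanishing $H^1(N_f|_{C_1}) = 0$ through the interpolation machinery, and --- above all --- extracting the key surjectivity of the evaluation map from the inequality $\chi(f_i^* T_{\pp^r}) \geq rn$. Once this is in place, the identification of the component $M$, the dominance of $\bar{M}_g$, and conditions (ii)--(iii) all follow with comparatively little further work.
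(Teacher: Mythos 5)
First, a point of orientation: the paper does not prove this proposition at all --- it is imported verbatim as Theorem~1.3 of \cite{rbn}, where its proof occupies essentially that entire paper and proceeds by induction (degenerating $\Gamma$ and the $f_i$ into special position and invoking irreducibility of the spaces of marked BN-curves through general points), not by a deformation-theoretic computation at the glued map $[f]$. Your proposal is therefore a genuinely different route, and it has a real gap at exactly the step you flag as the main obstacle: the surjectivity of
\[
H^0(N_{f_1}) \longrightarrow \bigoplus_{p \in \Gamma} \left(\frac{T_p(\pp^r)}{f_*(T_p(C_1 \cup_\Gamma C_2))}\right)
\]
is never derived from the hypothesis. ``Interpolation for $f_1^* T_{\pp^r}$'' is not a result available here (Proposition~\ref{inter} concerns $N_f$, with exceptions), and it is not even the right kind of statement: interpolation governs evaluation at divisors \emph{general on the source}, whereas here $\Gamma$ is general in $\pp^r$ and $\Gamma_1$ is its preimage under a map chosen to pass through it --- whether $\Gamma_1$ is ``general enough'' on $C_1$ is part of what must be proved. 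Nor does the numerology force the conclusion: the target above has dimension $(r-2)n$ (the quotient is by the span of \emph{both} branch tangents), while $(r+1)d_1 - rg_1 + r = \chi(f_1^* T_{\pp^r})$ is the dimension of the space of maps from the fixed source $C_1$; the hypothesis $\chi(f_1^* T_{\pp^r}) \geq rn$ is precisely the condition allowing the pointed source $(C_1, \Gamma_1)$ to be a general point of $M_{g_1,n}$ while $f_1$ still hits $\Gamma$, which is how \cite{rbn} actually uses it, and a dimension count is not a surjectivity proof. There is also internal evidence that your short route cannot be right as stated: the paper records $H^1(N_f) = 0$ as a \emph{separate} statement, Proposition~\ref{prop:interior}, valid only under additional hypotheses ($[f_1,\Gamma_1]$ general in its component and $H^1(N_{f_2}) = 0$) and itself cited from two further lemmas of \cite{rbn}. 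If the vanishing followed from Lemma~\ref{glue} together with the hypotheses of Proposition~\ref{prop:glue} alone, that proposition would be redundant.

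Even granting $H^1(N_f) = 0$ for general $f_i$ (which is true --- it is the content of Proposition~\ref{prop:interior}), what you then obtain is that $[f]$ lies on a unique component of expected dimension, dominating $\bar{M}_g$, whose general member has smooth source; that part of your argument is sound. But the remaining ``nonspecial or nondegenerate'' verification is also only sketched: your Mayer--Vietoris argument needs $H^0(f_1^*\oo_{\pp^r}(1)) \oplus H^0(f_2^*\oo_{\pp^r}(1)) \to f^*\oo_{\pp^r}(1)|_\Gamma$ to be surjective, which must be extracted from the geometry (e.g.\ from the fact that a degenerate nonspecial $f_1$ spans only a $\pp^{d_1-g_1}$, bounding $n$ when $\Gamma$ is general and $f$ is degenerate) rather than asserted. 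In summary, your skeleton is a reasonable reconstruction of Proposition~\ref{prop:interior} plus standard smoothing arguments, but the load-bearing surjectivity --- the only place the numerical hypothesis enters --- is unproven, and the mechanism you propose for it does not match the role that hypothesis actually plays.
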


\begin{prop} \label{prop:interior}
In Proposition~\ref{prop:glue}, suppose that $[f_1, \Gamma_1]$ is general in some component
of the space of marked WBN-curves passing through $\Gamma$,
and that $H^1(N_{f_2}) = 0$. Then $H^1(N_f) = 0$.
\end{prop}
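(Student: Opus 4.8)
The plan is to deduce $H^1(N_f) = 0$ from Lemma~\ref{glue}, applied with (in the notation there) $C := C_2$, $D := C_1$ and $E = F = 0$, so that the conclusion reads $H^1(N_f) = 0$. With this choice the condition ``$H^1(N_{f|_C}(-E)) = 0$'' is precisely the hypothesis $H^1(N_{f_2}) = 0$, since $N_{f|_{C_2}} = N_{f_2}$. Thus the whole argument reduces to extracting, from the genericity of $[f_1,\Gamma_1]$, two statements about $f_1$: that $H^1(N_{f_1}) = 0$, and that the restriction map $H^0(N_{f_1}) \to N_{f_1}|_\Gamma := \bigoplus_{p \in \Gamma}\bigl(T_p\pp^r / f_* T_p C_1\bigr)$ is surjective. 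Granting these, the condition ``$H^1(N_f|_D(-F)) = 0$'' — i.e.\ $H^1(N_f|_{C_1}) = 0$ — follows from the exact sequence $0 \to N_{f_1} \to N_f|_{C_1} \to T^1_\Gamma \to 0$ (with $T^1_\Gamma$ punctual), which realizes $H^1(N_f|_{C_1})$ as a quotient of $H^1(N_{f_1}) = 0$; and the surjectivity of the map $\alpha$ of Lemma~\ref{glue} follows because $\alpha$ factors as $H^0(N_{f_1}) \to N_{f_1}|_\Gamma \to \bigoplus_{p\in\Gamma}\bigl(T_p\pp^r / f_*T_p(C_1\cup_\Gamma C_2)\bigr)$, the first map being surjective by the second statement and the second being a quotient map. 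Lemma~\ref{glue} then yields $H^1(N_f) = 0$.

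To produce the two statements about $f_1$, I would argue as follows. Since $\Gamma$ is a general point of $(\pp^r)^n$ and a marked WBN-curve of the relevant numerical type through $\Gamma$ exists, the evaluation morphism $\mathrm{ev}$ from the (irreducible) component $\mathcal M_1^{\mathrm{mk}}$ of marked WBN-curves in question to $(\pp^r)^n$ is dominant. Hence its general fibre is equidimensional of dimension $\dim \mathcal M_1^{\mathrm{mk}} - rn$, and — being the general fibre of a dominant morphism in characteristic zero — the general point of each of its components is a smooth point of $\mathcal M_1^{\mathrm{mk}}$, lies on no other component of the ambient space of marked stable maps, and is a point at which $\mathrm{ev}$ is a submersion (the singular locus of $\mathcal M_1^{\mathrm{mk}}$, its intersection with other components, and the ramification locus of $\mathrm{ev}$ are all proper closed, hence meet each component of a general fibre properly). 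Thus $[f_1,\Gamma_1]$ is such a point, and projecting away the marked points exhibits $[f_1]$ as a general point of the component $\mathcal M_1$ of $\bar M_{g_1}(\pp^r,d_1)$. Since $\mathcal M_1$ has the expected dimension $\chi(N_{f_1})$ (Brill--Noether theory, c.f.\ \cite{rbn}) and $[f_1]$ lies on no other component, $h^0(N_{f_1}) = \dim T_{[f_1]}\bar M_{g_1}(\pp^r,d_1) = \chi(N_{f_1})$, i.e.\ $H^1(N_{f_1}) = 0$. And submersivity of $\mathrm{ev}$ at $[f_1,\Gamma_1]$ translates, through the standard description of $d\,\mathrm{ev}$ — moving the marked point $p_i$ on the domain moves $f_1(p_i)$ along $f_*T_{p_i}C_1$, which is exactly the line killed in the fibre $N_{f_1}|_{p_i}$ — into the surjectivity of $H^0(N_{f_1}) \to N_{f_1}|_\Gamma$.

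One should finally check that Lemma~\ref{glue} literally applies, i.e.\ that $f = f_1 \cup_\Gamma f_2$ is an unramified map from a nodal reducible curve: each $f_i$ is an immersion, and the tangent lines $T_{p_i}(f_1,\Gamma)$, being general lines through the general points of $\Gamma$, are distinct from the fixed tangent lines of $f_2$ there, so the two branches at each node of $C_1\cup_\Gamma C_2$ are genuinely distinct. The hard part will be the second paragraph — converting the soft genericity hypothesis into the concrete vanishing $H^1(N_{f_1}) = 0$ and the surjectivity of $H^0(N_{f_1}) \to N_{f_1}|_\Gamma$. The delicate points are (i) that $\mathrm{ev}$ is dominant and that a general point of a general fibre of a dominant morphism simultaneously avoids the singular locus of the source and the ramification locus of the morphism; (ii) that the BN/WBN component genuinely has the expected dimension $\chi(N_{f_1})$, which is what upgrades ``smooth point'' to $H^1(N_{f_1}) = 0$; and (iii) the deformation-theoretic identification of $d\,\mathrm{ev}$ with the evaluation map on $H^0(N_{f_1})$. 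Everything after that is the bookkeeping of Lemma~\ref{glue}.
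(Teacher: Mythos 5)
The paper disposes of this proposition by citing Lemmas~3.2 and~3.4 of \cite{rbn}, so what you are really doing is reconstructing the content of that citation. Your reduction is fine as far as it goes: applying Lemma~\ref{glue} with $D = C_1$, $C = C_2$, $E = F = 0$ does reduce everything to the two statements that $H^1(N_{f_1}) = 0$ and that $H^0(N_{f_1}) \to \bigoplus_{p \in \Gamma} N_{f_1}|_p$ is surjective; and your derivation of the surjectivity from generic smoothness of the evaluation map in characteristic zero, together with the identification of $d\,\mathrm{ev}$ with the evaluation of sections of $N_{f_1}$ at $\Gamma$, is correct.

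The gap is in how you obtain $H^1(N_{f_1}) = 0$. You argue: $\mathcal{M}_1$ has the expected dimension $\chi(N_{f_1})$, and $[f_1]$ is a smooth point of $\mathcal{M}_1$ lying on no other component, hence $h^0(N_{f_1}) = \dim T_{[f_1]}\bar{M}_{g_1}(\pp^r,d_1) = \dim \mathcal{M}_1 = \chi(N_{f_1})$. The middle equality is the problem: $h^0(N_{f_1})$ computes the Zariski tangent space of the Kontsevich space \emph{as a scheme}, and this equals $\dim \mathcal{M}_1$ only if that scheme is reduced at $[f_1]$. Smoothness of the underlying reduced variety at a general point is automatic and gives nothing, and having exactly the expected dimension does not force generic reducedness: Mumford's component of degree-$14$, genus-$24$ space curves has dimension $56 = \chi(N_C)$ yet is everywhere nonreduced, with $h^0(N_C) = 57$ and $h^1(N_C) = 1$ at every point. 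So your dimension count only yields $h^0(N_{f_1}) \geq \chi(N_{f_1})$, which is vacuous. What actually makes a general WBN-curve unobstructed is not a dimension count but the defining hypotheses: $H^1(N_{f_1})$ is a quotient of $H^1(f_1^* T_{\pp^r})$ via the normal bundle sequence, and the pulled-back Euler sequence identifies $H^1(f_1^* T_{\pp^r})^\vee$ with the kernel of the Petri map $V \otimes H^0(K_{C_1} \otimes L^{-1}) \to H^0(K_{C_1})$; this kernel vanishes either because the linear series is nonspecial, or, in the nondegenerate special case, by the Gieseker--Petri theorem applied to the general curve of $\bar{M}_{g_1}$ --- which is exactly where the hypothesis that the component dominates $\bar{M}_{g_1}$ enters. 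That input, which is the substance of Lemma~3.2 of \cite{rbn}, cannot be replaced by the formal argument you give.
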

\begin{proof}
This follows from combining Lemmas~3.2 and~3.4 of~\cite{rbn}.
\end{proof}

The following lemmas give information about the spaces of marked BN-curves
passing through small numbers of points.

\begin{lm} \label{small-irred}
Let $\Gamma \subset \pp^r$ be a general set of $n \leq r + 2$ points,
and $d$ and $g$ be integers with $\rho(d, g, r) \geq 0$.
Then the space of marked BN-curves of degree $d$ and genus $g$ to $\pp^r$
passing through $\Gamma$ is irreducible.
\end{lm}
\begin{proof}
First note that, since $n \leq r + 2$, any $n$ points in linear general position
are related by an automorphism of $\pp^r$. Fix some ordering on $\Gamma$.

The space of BN-curves of degree $d$ and genus $g$
is irreducible, and the source of the generic BN-curve is irreducible;
consequently the space of such BN-curves with an ordered collection of $n$
marked points, and the open subset thereof where the images of the marked points
are in linear general position, is irreducible.
It follows that the space of such marked curves endowed with an automorphism
bringing the images of the ordered marked points to~$\Gamma$ (respecting our fixed ordering on $\Gamma$)
is also irreducible.
But by applying the automorphism to the curve and forgetting the order of the marked points,
this latter
space dominates the space of such BN-curves passing through~$\Gamma$;
the space of such BN-curves passing through~$\Gamma$ is thus irreducible.
\end{proof}

\begin{lm} \label{gen-tang-rat}
Let $\Gamma \subset \pp^r$ be a general set of $n \leq r + 2$ points, and
$\{\ell_p : p \in \Gamma\}$ be a set of lines with $p \in \ell_p$.

Then the general marked rational normal curve
passing through $\Gamma$ has tangent lines at each point $p \in \Gamma$ distinct from $\ell_p$.
\end{lm}
\begin{proof}
Since the intersection of dense opens is a dense open, it suffices to show
the general marked rational normal curve $(f \colon C \to \pp^r, \Delta)$ passing through $\Gamma$
has tangent line at $p$ distinct from $\ell_p$
for any one $p \in \Gamma$.

For this we consider the map, from the space of such marked rational normal curves, to the space
of lines through $p$, which associates to the curve its tangent line at $p$.
Basic deformation theory implies this map is smooth (and thus nonconstant) at $(f, \Delta)$
so long as $H^1(N_f(-\Delta)(-q)) = 0$, where $q \in \Delta$ is the point sent to $p$ under $f$,
which follows from combining Propositions~\ref{inter} and~\ref{twist}.
\end{proof}

\begin{lm} \label{contains-rat} A general BN-curve $f \colon C \to \pp^r$ can be specialized to an unramified map from a
reducible curve $f^\circ \colon X \cup_\Gamma Y \to \pp^r$,
where $f^\circ|_X$ is a rational normal curve.
\end{lm}
\begin{proof}
Write $d$ and $g$ for the degree and genus of $f$.
We first note it suffices to produce a marked WBN-curve $(f^\circ_2 \colon Y \to \pp^r, \Gamma_2)$ of degree $d - r$
and genus $g' \geq g - r - 1$, passing through a set
$\Gamma$ of $g + 1 - g'$ general points.
Indeed, $g + 1 - g' \leq g + 1 - (g - r - 1) = r + 2$ by assumption;
by Lemma~\ref{gen-tang-rat}, there is a marked rational normal curve $(f^\circ_1 \colon X \to \pp^r, \Gamma_1)$ passing through $\Gamma$,
whose tangent lines at $\Gamma$ are distinct from the tangent lines of $(f_2^\circ, \Gamma_2)$ at~$\Gamma$.
Then $f^\circ \colon X \cup_\Gamma Y \to \pp^r$ is unramified (as promised by our conventions)
and gives the required specialization by 
Proposition~\ref{prop:glue}.

It remains to construct $(f_2^\circ \colon Y \to \pp^r, \Gamma_2)$. If $g \leq r$, then we note that since
$d$ and $g$ are integers,
\[d \geq d - \frac{\rho(d, g, r)}{r + 1} = g + r - \frac{g}{r + 1} \imp d \geq g + r \quad \Leftrightarrow \quad g + 1 \leq (d - r) + 1.\]
Consequently, by inspection,
there is a marked rational curve $(f_2^\circ \colon Y \to \pp^r, \Gamma_2)$ of degree $d - r$ passing through a set $\Gamma$ of $g + 1$ general points.

On the other hand, if $g \geq r + 1$, then we
note that
\[\rho(d - r, g - r - 1, r) = (r + 1)(d - r) - r(g - r - 1) - r(r + 1) = (r + 1)d - rg - r(r + 1) = \rho(d, g, r) \geq 0.\]
We may therefore let $(f_2^\circ \colon Y \to \pp^r, \Gamma_2)$ be a marked BN-curve of degree $d - r$ and genus $g - r - 1$
passing through a set $\Gamma$ of $r + 2$ general points.
\end{proof}

\begin{lm} \label{gen-tang}
Let $\Gamma \subset \pp^r$ be a general set of $n \leq r + 2$ points,
$\{\ell_p : p \in \Gamma\}$ be a set of lines with $p \in \ell_p$,
and $d$ and $g$ be integers with $\rho(d, g, r) \geq 0$.

Then the general marked BN-curve $(f \colon C \to \pp^r, \Delta)$ of degree $d$ and genus $g$
passing through $\Gamma$
has tangent lines at every $p \in \Gamma$ which are distinct from $\ell_p$.
\end{lm}
\begin{proof}
By Lemma~\ref{contains-rat}, we may specialize $f \colon C \to \pp^r$
to $f^\circ \colon X \cup_\Gamma Y \to \pp^r$ where $f^\circ|_X$ is a rational
normal curve. Specializing the marked points $\Delta$ to lie on $X$
(which can be done since a marked rational normal curve can pass through $n \leq r + 2$ general points
by Proposition~\ref{inter}),
it suffices to consider the case when $f$ is a rational
normal curve.
But this case was already considered in Lemma~\ref{gen-tang-rat}.
\end{proof}

\begin{lm} \label{contains-rat-sp}
Lemma~\ref{contains-rat} remains true
even if we instead ask
$f^\circ|_X$ to be an arbitrary nondegenerate specialization
of a rational normal curve.
\end{lm}
\begin{proof}
We employ the construction used in the proof of Lemma~\ref{contains-rat},
but flipping the order in which we construct $X$ and $Y$:
First we fix $(f_1^\circ \colon X \to \pp^r, \Gamma_1)$; then we construct $(f_2^\circ \colon Y \to \pp^r, \Gamma_2)$
passing through $\Gamma$,
whose tangent lines at
$\Gamma$ are distinct from the tangent lines of $(f_1^\circ, \Gamma_1)$ at $\Gamma$
thanks to Lemma~\ref{gen-tang}.
\end{proof}

\section{Inductive Arguments \label{sec:indarg}}

Let $f \colon C \cup_u L \to \pp^r$ be an unramified map from a reducible curve,
with $L \simeq \pp^1$, and $u$ a single point,
and $f|_L$ of degree~$1$.
By Proposition~\ref{prop:glue}, these
curves are BN-curves.

\begin{lm} \label{p4-add-line} If $H^1(N_{f|_C}(-1)) = 0$,
then $H^1(N_f(-1)) = 0$.
\end{lm}
\begin{proof}
This is immediate from Lemma~\ref{glue} (taking $D = L$).
\end{proof}

\begin{lm} \label{p3-add-line} If $H^1(N_{f|_C}(-2)) = 0$,
and $f$ is a general map of the above type extending $f|_C$, then $H^1(N_f(-2)) = 0$.
\end{lm}
\begin{proof}
By Lemma~\ref{addone-raw}, it suffices to prove that
for $(u, v) \in C \times \pp^3$ general,
\[H^1(N_{f|_C}(-2)(u)[2u \to v]) = 0.\]
Since $H^1(N_{f|_C}(-2)) = 0$, we also have $H^1(N_{f|_C}(-2)(u)) = 0$;
in particular, Riemann-Roch implies
\begin{align*}
\dim H^0(N_{f|_C}(-2)(u)) &= \chi(N_{f|_C}(-2)(u)) = 2 \\
\dim H^0(N_{f|_C}(-2)) &= \chi(N_{f|_C}(-2)) = 0.
\end{align*}

The above dimension
estimates imply there is a unique section $s \in \pp H^0(N_{f|_C}(-2)(u))$
with $s|_u \in N_{f|_C \to v}|_u$; it remains to show that for $(u, v)$
general, $\langle s|_{2u} \rangle \neq N_{f|_C \to v}|_{2u}$.
For this, it suffices to verify that if $v_1$ and $v_2$
are points with $\{v_1, v_2, f(2u)\}$ coplanar --- but
neither $\{v_1, v_2, f(u)\}$, nor $\{v_1, f(2u)\}$, nor $\{v_2, f(2u)\}$
collinear; and $\{v_1, v_2, f(3u)\}$ not coplanar --- then
$N_{f|_C \to v_1}|_{2u} \neq N_{f|_C \to v_2}|_{2u}$.

To show this, we choose a local coordinate $t$ on $C$,
and coordinates on an appropriate affine open $\aa^3 \subset \pp^3$, so that:
\begin{align*}
f(t) &= (t, t^2 + O(t^3), O(t^3)) \\
v_1 &= (1 , 0 , 1) \\
v_2 &= (-1 , 0 , 1).
\end{align*}

It remains to check that the vectors
$f(t) - v_1$, $f(t) - v_2$, and $\frac{d}{dt} f(t)$
are linearly independent at first order in $t$. That is,
we want to check that the determinant
\[\left|\begin{array}{ccc}
t - 1 & t^2 + O(t^3) & O(t^3) - 1 \\
t + 1 & t^2 + O(t^3) & O(t^3) - 1 \\
1 & 2t + O(t^2) & O(t^2)
\end{array}\right|\not\equiv 0 \mod t^2.\]
Or, reducing the entries of the left-hand side modulo $t^2$, that
\[-4t = \left|\begin{array}{ccc}
t - 1 & 0 & - 1 \\
t + 1 & 0 & - 1 \\
1 & 2t & 0
\end{array}\right|\not\equiv 0 \mod t^2,\]
which is clear.
\end{proof}

\begin{lm} \label{add-can-3}
Let $\Gamma \subset \pp^3$ be a set of $5$ general points,
$(f_1 \colon C \to \pp^3, \Gamma_1)$ be a general marked BN-curve
passing through $\Gamma$, and
$(f_2 \colon D \to \pp^3, \Gamma_2)$
be a general marked canonical curve
passing through $\Gamma$.
If $H^1(N_{f_1}(-2)) = 0$,
then $f \colon C \cup_\Gamma D \to \pp^r$ satisfies $H^1(N_f(-2)) = 0$.
\end{lm}

\begin{rem}
By Lemma~\ref{small-irred}, it makes sense to speak of a
``general marked BN-curve (respectively general marked canonical curve)
passing through $\Gamma$'';
by Lemma~\ref{gen-tang}, the resulting curve $f$ is unramified.
\end{rem}

\begin{proof}
By Lemma~\ref{glue}, our problem reduces
to showing that the natural map
\[H^0(N_{f_2} (-2)) \to \bigoplus_{p \in \Gamma} \left(\frac{T_p (\pp^r)}{f_* (T_p (C \cup_\Gamma D))}\right)\]
is surjective, and that
\[H^1(N_f|_D (-2)) = 0.\]
These conditions both being open, we may invoke
Lemma~\ref{contains-rat} to specialize
$(f_1 \colon C \to \pp^3, \Gamma_1)$ to a marked BN-curve with reducible source
$(f_1^\circ \colon C_1 \cup_\Delta C_2 \to \pp^3, \Gamma_1^\circ)$,
with $f_1^\circ|_{C_1}$ a rational
normal curve and $\Gamma_1^\circ \subset C_1$.
It thus suffices to prove the above statements in the case when $f_1 = f_1^\circ$
is a rational normal curve.

For this, we first observe that $f(C) \cap f(D) = \Gamma$:
Since there is a unique rational normal curve through any $6$ points,
and a $1$-dimensional family of possible sixth points on $D$
once $D$ and $\Gamma$ are fixed --- but there is a $2$-dimensional family
of rational normal curves through $5$ points
in linear general position --- 
dimension counting shows $f_1(C)$ and $f_2(D)$ cannot meet at a sixth point
for $([f_1, \Gamma_1], [f_2, \Gamma_2])$ general.
In particular, $f$ is an immersion.

Next, we observe that $f(D)$ is contained in a $5$-dimensional space
of cubics. Since it is one linear condition, for a cubic that vanishes on $f(D)$,
to be tangent to $f(C)$ at a point of $\Gamma$, there is necessarily a cubic
surface $S$ containing $f(D)$ which is tangent to $f(C)$ at four points of $\Gamma$.

If $S$ were a multiple of $Q$, say $Q \cdot H$ where $H$ is a hyperplane, then
since $f(C)$ is transverse to $Q$, it would follow that $H$ contains four points of $\Gamma$.
But any $4$ points on $f(C)$ are in linear general position. Consequently, $S$ is not
a multiple of $Q$. Or equivalently, $f(D) = Q \cap S$ gives a presentation of $f(D)$
as a complete intersection.

If $S$ were tangent to $f(C)$ at all five points of $\Gamma$, then restricting the
equation of $S$ to $f(C)$ would give a section of $\oo_C(3) \simeq \oo_{\pp^1}(9)$
which vanished with multiplicity two at five points. Since the only such section
is the zero section, we would conclude that $f(C) \subset S$.
But then $f(C)$ would meet $f(D)$ at all $6$ points of $f(C) \cap Q$,
which we already ruled out above.
Thus, $S$ is tangent to $f(C)$ at precisely four points of $\Gamma$.

Write $\Delta$ for the divisor on $D$ defined by these four points,
and $p$ for the fifth point. Note that for $q \neq p$ in the tangent line to $(f_1, \Delta \cup \{p\})$
at $p$,
\begin{align*}
N_f|_D &\simeq \big(N_{f(D)/S}(\Delta + p) \oplus N_{f(D)/Q}(p)\big)[p \to q] \\
&\simeq \big(\oo_D(2)(\Delta + p) \oplus \oo_D(3)(p)\big)[p \to q] \\
\Rightarrow \ N_f|_D(-2) &\simeq \big(\oo_D(\Delta + p) \oplus \oo_D(1)(p)\big)[p \to q] \\
&\simeq \big(\oo_D(\Delta + p) \oplus K_D(p)\big)[p \to q].
\end{align*}

By Riemann-Roch, $\dim H^0(K_D(p)) = 4 = \dim H^0(K_D)$; so every section
of $K_D(p)$ vanishes at $p$. Consequently,
the fiber of every section of $\oo_D(\Delta + p) \oplus K_D(p)$
at $p$ lies in the fiber of the first factor. Since the fiber $N_{f_2 \to q}|_p$
does not lie in the fiber of the first factor, we have an isomorphism
\[H^0(N_f|_D(-2)) \simeq H^0\Big(\big(\oo_D(\Delta + p) \oplus K_D(p)\big)(-p)\Big) \simeq H^0(\oo_D(\Delta)) \oplus H^0(K_D).\]
Consequently,
\[\dim H^0(N_f|_D(-2)) = \dim H^0(\oo_D(\Delta)) + \dim H^0(K_D) = 1 + 4 = 5 = \chi(N_f|_D(-2)),\]
which implies
\[H^1(N_f|_D(-2)) = 0.\]

\noindent
Next, we prove the surjectivity of the evaluation map
\[\text{ev} \colon H^0(N_{f_2}(-2)) \to \bigoplus_{x \in \Gamma} \left(\frac{T_x (\pp^r)}{f_* (T_x (C \cup_\Gamma D))}\right)\]
For this, we use the isomorphism
\[N_{f_2}(-2) \simeq N_{f(D)/\pp^3}(-2) \simeq N_{f(D)/S}(-2) \oplus N_{f(D)/Q}(-2) \simeq \oo_D \oplus K_D.\]
The restriction of $\text{ev}$ to $H^0(N_{f(D)/S}(-2) \simeq \oo_D)$
maps trivially into the quotient $\frac{T_x (\pp^r)}{f_*(T_x (C \cup_\Gamma D))}$
for $x \in \Delta$, since $S$ is tangent to $f(C)$ along $\Delta$.
Because $S$ is not tangent to $f(C)$ at $p$,
the restriction of $\text{ev}$ to $H^0(N_{f(D)/S}(-2) \simeq \oo_D)$ thus
maps isomorphically onto the factor $\frac{T_p (\pp^r)}{f_*(T_p (C \cup_\Gamma D))}$.
It is therefore sufficient to show that the evaluation map
\[H^0(N_{f(D)/Q}(-2) \simeq K_D) \to \bigoplus_{x \in \Delta} \left(\frac{T_x (\pp^r)}{f_*(T_x (C \cup_\Gamma D))}\right)\]
is surjective. Or equivalently, since $Q$ is not tangent to $f(C)$ at any $x \in \Delta$,
that the evaluation map
\[H^0(K_D) \to K_D|_\Delta\]
is surjective. But this is clear since $\dim H^0(K_D) = 4 = \# \Delta$
and $\Delta$ is a general effective divisor of degree~$4$ on $D$.
\end{proof}

\begin{lm} \label{to-3-skew}
Let $f \colon C \to \pp^4$ be a general BN-curve in $\pp^4$, of arbitrary degree and genus.
Then we can specialize $f$ to an unramified map from a reducible curve
$f^\circ \colon C' \cup L_1 \cup L_2 \cup L_3 \to \pp^4$,
so that each $L_i$ is rational, $f^\circ|_{L_i}$ is of degree~$1$,
and the images of the $L_i$ under $f^\circ$ are in linear general position.
\end{lm}

\begin{proof}
By Lemma~\ref{contains-rat-sp},
our problem reduces to the case $f\colon C \to \pp^4$ is a rational normal curve.

In this case, we begin by taking three general lines in $\pp^4$.
The locus of lines meeting
each of our lines has class $\sigma_2$ in the Chow ring of
the Grassmannian $\mathbb{G}(1, 4)$ of lines in $\pp^4$.
By the standard calculus of Schubert cycles,
we have $\sigma_2^3 = \sigma_{2,2} \neq 0$
in the Chow ring of $\mathbb{G}(1, 4)$.
Thus, there exists a line meeting each of our three given lines.
The (immersion of the)
union of these four lines is then a specialization of a rational
normal curve.
\end{proof}

\begin{lm} \label{add-can-4}
Let $\Gamma \subset \pp^4$ be a set of $6$ points in linear general position;
$(f_1 \colon C \to \pp^4, \Gamma_1)$ be either a general marked
immersion of three disjoint lines,
or a general marked BN-curve in $\pp^4$, passing through $\Gamma$;
and $(f_2 \colon D \to \pp^4, \Gamma_2)$ be a general marked canonical curve
passing through~$\Gamma$.
If $H^1(N_{f_1}(-1)) = 0$, then
$f \colon C \cup_\Gamma D \to \pp^4$
satisfies $H^1(N_f(-1)) = 0$.
\end{lm}

\begin{proof}
By Lemma~\ref{glue}, it suffices to prove that the natural map
\[H^0(N_{f_2}(-1)) \to \bigoplus_{p \in \Gamma} \left(\frac{T_p(\pp^r)}{f_*(T_p(C \cup_\Gamma D))}\right)\]
is surjective, and that
\[H^1(N_f|_D(-1)) = 0.\]
These conditions both being open,
we may apply Lemma~\ref{to-3-skew}
to specialize $(f_1, \Gamma_1)$ to a marked curve
with reducible source
$(f_1^\circ \colon C_1 \cup C_2 \to \pp^r, \Gamma_1^\circ)$,
with $C_1 = L_1 \cup L_2 \cup L_3$ a union of $3$ disjoint lines,
and $\Gamma_1^\circ \subset C_1$ with $2$ points on each line.
It thus suffices to prove the above statements in the case when $C = C_1 = L_1 \cup L_2 \cup L_3$
is the union of $3$ general lines.
Write $\Gamma = \Gamma_1 \cup \Gamma_2 \cup \Gamma_3$, where $\Gamma_i \subset L_i$.

It is well known that every canonical curve in $\pp^4$ is the complete intersection of three quadrics;
write $V$ for the vector space of quadrics vanishing along $f(D)$.
For any $2$-secant line $L$ to $f(D)$, it is evident that it is one linear condition
on quadrics in $V$ to contain $L$; and moreover, that general lines impose independent
conditions unless there is a quadric which contains all $2$-secant lines.
Now the projection from a general line in $\pp^4$ of $f(D)$ yields a nodal plane curve
of degree $8$ and geometric genus $5$, which in particular must have
\[\binom{8 - 1}{2} - 5 = 16\]
nodes.
Consequently, the secant variety to $f(D)$
is a hypersurface of degree $16$; and is thus not contained in a quadric.
Thus, vanishing on general lines impose independent
conditions on~$V$. As $f(L_1)$, $f(L_2)$, and $f(L_3)$ are general,
we may thus choose a basis $V = \langle Q_1, Q_2, Q_3 \rangle$
so that $Q_i$ contains $L_j$ if an only if $i \neq j$
(where the $Q_i$ are uniquely defined up to scaling).
By construction, $f(D)$ is the complete intersection $Q_1 \cap Q_2 \cap Q_3$.

We now consider the direct sum decomposition
\[N_{f_2} \simeq N_{f(D)/\pp^4} \simeq N_{f(D)/(Q_1 \cap Q_2)} \oplus N_{f(D)/(Q_2 \cap Q_3)} \oplus N_{f(D)/(Q_3 \cap Q_1)},\]
which induces a direct sum decomposition
\[N_f|_D \simeq N_{f(D)/(Q_1 \cap Q_2)}(\Gamma_3) \oplus N_{f(D)/(Q_2 \cap Q_3)}(\Gamma_1) \oplus N_{f(D)/(Q_3 \cap Q_1)}(\Gamma_2).\]
To show that $H^1(N_f|_D(-1)) = 0$, it is sufficient
by symmetry to show that
\[H^1(N_{f(D)/(Q_1 \cap Q_2)}(\Gamma_3)(-1)) = 0.\]
But we have
\[N_{f(D)/(Q_1 \cap Q_2)}(\Gamma_3)(-1) \simeq \oo_D(2)(\Gamma_3)(-1) \simeq \oo_D(1)(\Gamma_3) = K_D(\Gamma_3);\]
so by Serre duality,
\[H^1(N_{f(D)/(Q_1 \cap Q_2)}(\Gamma_3)(-1)) \simeq H^0(\oo_D(-\Gamma_3))^\vee = 0.\]

\noindent
Next, we examine the evaluation map
\[H^0(N_{f_2}(-1)) \to \bigoplus_{p \in \Gamma} \left(\frac{T_p(\pp^r)}{f_*(T_p(C \cup_\Gamma D))}\right).\]
For this, we use the direct sum decomposition
\[N_{f_2} \simeq N_{f(D)/\pp^4} \simeq N_{f(D)/(Q_1 \cap Q_2)}(-1) \oplus N_{f(D)/(Q_2 \cap Q_3)}(-1) \oplus N_{f(D)/(Q_3 \cap Q_1)}(-1),\]
together with the decomposition (for $p \in \Gamma_i$):
\[\frac{T_p (\pp^r)}{f_*(T_p(C \cup_{\Gamma_i} L_i))} \simeq \bigoplus_{j \neq i} N_{f(D)/(Q_i \cap Q_j)}|_p.\]
This reduces our problem to showing (by symmetry) the surjectivity of
\[H^0(N_{f(D)/(Q_1 \cap Q_2)}(-1)) \to \bigoplus_{p \in \Gamma_1 \cup \Gamma_2} N_{f(D)/(Q_1 \cap Q_2)}|_p.\]
But for this, it is sufficient to note that
$\Gamma_1 \cup \Gamma_2$ is a general collection of $4$ points
on $D$, and
\[N_{f(D)/(Q_1 \cap Q_2)}(-1) \simeq \oo_D(2)(-1) = \oo_D(1) \simeq K_D.\]
It thus remains to show
\[H^0(K_D) \to K_D|_{\Gamma_1 \cup \Gamma_2}\]
is surjective, where $\Gamma_1 \cup \Gamma_2$ is a general collection of $4$ points
on $D$. But this is clear because $K_D$ is a line bundle
and $\dim H^0(K_D) = 5 \geq 4$.
\end{proof}

\begin{cor} \label{finite} To prove the main theorems (excluding the ``conversely\ldots'' part),
it suffices to verify them in the following special cases:
\begin{enumerate}
\item For Theorem~\ref{main-3}, it suffices to consider the cases where $(d, g)$ is one of:
\begin{gather*}
(5, 1), \quad (7, 2), \quad (6, 3), \quad (7, 4), \quad (8, 5), \quad (9, 6), \quad (9, 7), \\
(10, 9), \quad (11, 10), \quad (12, 12), \quad (13, 13) \quad (14, 14).
\end{gather*}
\item For Theorem~\ref{main-3-1}, it suffices to consider the cases where $(d, g)$ is one of:
\[(7, 5), \quad (8, 6).\]
\item For Theorem~\ref{main-4}, it suffices to consider the cases where $(d, g)$ is one of:
\[(9, 5), \quad (10, 6), \quad (11, 7), \quad (12, 9), \quad (16, 15), \quad (17, 16), \quad (18, 17).\]
\end{enumerate}
In proving the theorems in each of these cases, we may suppose the
corresponding theorem holds for curves of smaller genus.
\end{cor}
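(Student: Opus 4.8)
The plan is to prove Theorems~\ref{main-3}, \ref{main-3-1} and~\ref{main-4} by a single induction on the genus~$g$, using the reduction lemmas of Sections~\ref{sec:inter}--\ref{sec:indarg} to peel off from a general BN-curve either a line (lowering the degree~$d$ while fixing~$g$) or a canonical curve glued along $r+2$ general points (lowering both $d$ and~$g$), until the only admissible pairs $(d,g)$ that cannot be reduced are those listed. Theorems~\ref{main-2} and~\ref{main-2-1} are immediate from Proposition~\ref{p2}, so nothing remains there; and as noted in the introduction, for each of the other theorems it suffices to exhibit, for the relevant $(r,n)$ and every admissible $(d,g)$, a single unramified BN-curve $f\colon C\to\pp^r$ with $H^1(N_f(-n))=0$, since this is an open condition on the irreducible BN-locus.

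For Theorem~\ref{main-3-1} almost nothing is needed: twisting $0\to N_f(-2)\to N_f(-1)\to N_f(-1)|_Z\to 0$, where $Z$ is a hyperplane section of~$C$ (a finite subscheme, so $H^1(N_f(-1)|_Z)=0$), shows $H^1(N_f(-2))=0\Rightarrow H^1(N_f(-1))=0$. Thus Theorem~\ref{main-3-1} for $(d,g)$ follows from Theorem~\ref{main-3} for $(d,g)$ unless $(d,g)$ is one of the six exceptional pairs of Theorem~\ref{main-3}; of those, the three with $d\geq g+3$ (namely $(4,1)$, $(5,2)$, $(6,2)$) are covered by Lemma~\ref{from-inter}, and $(6,4)$ is the genuine exception of Theorem~\ref{main-3-1}, so only $(7,5)$ and $(8,6)$ remain, which is the list for Theorem~\ref{main-3-1}.

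The core is the induction for Theorems~\ref{main-3} and~\ref{main-4}. The base --- the case $g=0$ for $r=3$, the cases $g\leq 4$ for $r=4$, and in general any $(d,g)$ with $d$ large compared to~$g$ --- is supplied by Lemma~\ref{from-inter} (which itself uses Lemma~\ref{g2}). For the inductive step, given an admissible non-exceptional $(d,g)$: if $d$ exceeds the minimal degree allowed by $\rho(d,g,r)\geq 0$, strip off a line via Lemma~\ref{p3-add-line} (resp.\ Lemma~\ref{p4-add-line}) to reduce to $(d-1,g)$; if $d$ is minimal but $g\geq 8$ (resp.\ $g\geq 10$), strip off a canonical curve via Lemma~\ref{add-can-3} (resp.\ Lemma~\ref{add-can-4}) to reduce to $(d-6,g-8)$ (resp.\ $(d-8,g-10)$), an operation which one checks leaves $\rho$ unchanged. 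One further input is the ``three disjoint lines'' form of Lemma~\ref{add-can-4}: its hypothesis $H^1(N_{f_1}(-1))=0$ holds automatically (the restriction of $N_{f_1}(-1)$ to each line is trivial), so it settles $(11,8)$ for $r=4$ outright and genus~$8$ needs no base case. Checking that each reduction stays within the admissible, non-exceptional range --- in particular that the line-stripping step never lands on an exceptional pair, and that the minimal-degree pairs whose canonical target is exceptional are exactly the listed ones --- is a finite verification of inequalities on $(d,g)$, and yields precisely the lists for Theorems~\ref{main-3} and~\ref{main-4}. Because the canonical-curve step strictly lowers~$g$, the induction hypothesis is available whenever the argument invokes a smaller genus; in particular, when treating the remaining base cases one may assume the theorems in all lower genera.

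The one delicate point --- and the main obstacle to be careful about --- is that Lemmas~\ref{add-can-3} and~\ref{add-can-4} demand $H^1(N_{f_1}(-n))=0$ for a general \emph{marked} BN-curve of the smaller degree and genus passing through $r+2$ general points, not merely for a general BN-curve. I would bridge this as follows: the space of BN-curves of that degree and genus equipped with $r+2$ marked smooth points is irreducible (it fibers over the irreducible BN-locus); the locus where $H^1(N_f(-n))=0$ is open in it, and --- by the induction hypothesis applied to a general unmarked BN-curve --- nonempty, hence dense; and the evaluation map to $(\pp^r)^{r+2}$ is dominant, since a general BN-curve of this degree and genus passes through $r+2$ general points by the interpolation results recalled in Section~\ref{sec:inter}. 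A general fiber of this evaluation map is the space of marked BN-curves through $r+2$ general points, which is irreducible by Lemma~\ref{small-irred}; being irreducible and meeting the dense open locus, it lies generically in it --- exactly the vanishing the canonical-curve lemmas require. Granting this, what remains is only the routine bookkeeping confirming that the reduction terminates at the stated finite lists.
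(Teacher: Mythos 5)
Your proposal is correct and follows essentially the same route as the paper: strip lines via Lemmas~\ref{p3-add-line}/\ref{p4-add-line} to reduce to minimal degree, then attach canonical curves via Lemmas~\ref{add-can-3}/\ref{add-can-4} to drop $(d,g)$ to $(d-6,g-8)$ (resp.\ $(d-8,g-10)$), with Lemma~\ref{from-inter} and the three-skew-lines case supplying the remaining base cases, and the same bookkeeping (a pair stays on the list exactly when its reduction target is exceptional or nonexistent) producing the same lists. Your explicit bridge from the unmarked inductive hypothesis to the marked hypothesis of the gluing lemmas --- via openness of the $H^1$-vanishing locus, dominance of the evaluation map, and irreducibility of the fiber from Lemma~\ref{small-irred} --- is a point the paper leaves implicit, but it does not change the argument.
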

\begin{proof}
For Theorem~\ref{main-3}, note that by Lemma~\ref{p3-add-line}
and Proposition~\ref{prop:glue}, it suffices to show Theorem~\ref{main-3} for
each pair $(d, g)$, where $d$ is minimal (i.e.,\ where $\rho(d, g) = \rho(d, g, r = 3) \geq 0$
and $(d, g)$ is not in our list of counterexamples; but either $\rho(d - 1, g) < 0$,
or $(d - 1, g)$ is in our list of counterexamples).

If $\rho(d, g) \geq 0$ and $g \geq 15$, then $(d - 6, g - 8)$ is not in our list of counterexamples,
and $\rho(d - 6, g - 8) = \rho(d, g) \geq 0$. By induction, we know $H^1(N_f(-2)) = 0$
for $f$ a BN-general curve of degree $d - 6$ and genus $g - 8$.
Applying Lemma~\ref{add-can-3} (and Proposition~\ref{prop:glue}), we conclude the desired result.
If $\rho(d, g) \geq 0$ and $g \leq 14$, and $d$ is minimal as above,
then either $(d, g)$ is in our above list, or
$(d, g) \in \{(3, 0), (9, 8), (12, 11)\}$. The case of $(d, g) = (3, 0)$ follows from
Lemma~\ref{from-inter}.
But in these last two cases,
Lemma~\ref{add-can-3} again implies the desired result (using Theorem~\ref{main-3}
for $(d', g') = (d - 6, g - 8)$ as our inductive hypotheses).

For Theorem~\ref{main-3-1}, we note that if $H^1(N_f(-2)) = 0$, then
it follows that $H^1(N_f(-1)) = 0$. It therefore suffices to check
the list of counterexamples appearing in Theorem~\ref{main-3}
besides the counterexample $(d, g) = (6, 4)$ listed in Theorem~\ref{main-3-1}.
The cases $(d, g) \in \{(4, 1), (5, 2), (6, 2)\}$ follow from Lemma~\ref{from-inter},
so we only have to consider the remaining cases (which form the given list).

Finally, for Theorem~\ref{main-4}, Lemma~\ref{p4-add-line} implies it suffices
to show Theorem~\ref{main-4} for each pair $(d, g)$ with $d$ minimal.
If $\rho(d, g) \geq 0$ and $g \geq 18$, then $(d - 8, g - 10)$ is not in our list of counterexamples,
and $\rho(d - 8, g - 10) = \rho(d, g) \geq 0$. By induction, we know $H^1(N_f(-1)) = 0$
for $C$ is a general curve of degree $d - 8$ and genus $g - 10$.
Applying Lemma~\ref{add-can-4}, we conclude the desired result.
If $\rho(d, g) \geq 0$ and $g \leq 17$, and $d$ is minimal as above,
then either $(d, g)$ is in our above list, or
\[(d, g) \in \{(4, 0), (5, 1), (6, 2), (7, 3), (8, 4)\},\]
or
\[(d, g) \in \{(11, 8), (12, 10), (13, 11), (14, 12), (15, 13), (16, 14)\},\]

In the first set of cases above, Lemma~\ref{from-inter} implies the desired
result. But in the last set of cases,
Lemma~\ref{add-can-4} again implies the desired result. Here, for $(d, g) = (11, 8)$,
our inductive hypothesis is that
$H^1(N_f(-1)) = 0$ for $f \colon L_1 \cup L_2 \cup L_3 \to \pp^4$
an immersion of three skew lines.
In the remaining cases, we use Theorem~\ref{main-3}
for $(d', g') = (d - 8, g - 10)$ as our inductive hypothesis.
\end{proof}

\section{Adding Curves in a Hyperplane \label{sec:hir}}

In this section, we explain an inductive strategy involving adding
curves contained in hyperplanes, which will help resolve many of our
remaining cases.

\begin{lm} \label{smoothable} Let $H \subset \pp^r$ (for $r \geq 3$) be a hyperplane,
and let $(f_1 \colon C \to \pp^r, \Gamma_1)$ and
\mbox{$(f_2 \colon D \to H, \Gamma_2)$} be marked curves,
both passing through a set $\Gamma \subset H \subset \pp^r$ of $n \geq 1$ points.

Assume that $f_2$ is a general BN-curve of degree $d$ and genus $g$ to $H$,
that $\Gamma_2$ is a general collection of $n$ points on $D$, and that $f_1$ is transverse
to $H$ along $\Gamma$. If
\[H^1(N_{f_1}(-\Gamma)) = 0 \quad \text{and} \quad n \geq g - d + r,\]
then $f \colon C \cup_\Gamma D \to \pp^r$
satisfies $H^1(N_f) = 0$
and is a limit of unramified maps from smooth curves.

If in addition $f_1$ is an immersion,
$f(C) \cap f(D)$ is exactly equal to $\Gamma$, and
$\oo_D(1)(\Gamma)$ is very ample away from $\Gamma$ --- i.e.\ if
$\dim H^0(\oo_D(1)(\Gamma)(-\Delta)) = \dim H^0(\oo_D(1)(\Gamma)) - 2$
for any effective divisor $\Delta$ of degree $2$ supported on $D \smallsetminus \Gamma$ --- then
$f$ is a limit of immersions of smooth curves.
\end{lm}

\begin{rem} \label{very-ample-away}
The condition that $\oo_D(1)(\Gamma)$ is very ample away from $\Gamma$
is immediate when $\oo_D(1)$ is very ample (which in particular happens for $r \geq 4$).
It is also immediate when $n \geq g$, in which case $\oo_D(1)(\Gamma)$ is a general line bundle
of degree $d + n \geq g + r \geq g + 3$ and is thus very ample.
\end{rem}

\begin{proof}
Note that $N_{f_1}$ is a subsheaf of $N_f|_C$ with punctual quotient
(supported at $\Gamma$). Twisting down by $\Gamma$, we obtain a short exact sequence
\[0 \to N_{f_1}(-\Gamma) \to N_f|_C(-\Gamma) \to * \to 0,\]
where $*$ denotes a punctual sheaf, which in particular has vanishing $H^1$.
Since $H^1(N_{f_1}(-\Gamma)) = 0$ by assumption,
we conclude that $H^1(N_f|_C(-\Gamma)) = 0$ too.
Since $f_2$ is a general BN-curve, $H^1(N_{f_2}) = 0$.
The exact sequences
\begin{gather*}
0 \to N_f|_C(-\Gamma) \to N_f \to N_f|_D \to 0 \\
0 \to N_{f_2} \to N_f|_D \to N_H|_D(\Gamma) \simeq \oo_D(1)(\Gamma) \to 0
\end{gather*}
then imply that, to check $H^1(N_f) = 0$, it suffices to check $H^1(\oo_D(1)(\Gamma)) = 0$.
They moreover imply that
every section of $N_H|_D(\Gamma) \simeq \oo_D(1)(\Gamma)$ lifts to a section
of $N_f$, which, as $H^1(N_f) = 0$, lifts to a global deformation
of $f$.

To check $f$
is a limit of unramified maps from smooth curves, it remains to see that the
generic section of $N_H|_D(\Gamma) \simeq \oo_D(1)(\Gamma)$ corresponds
to a first-order deformation which smoothes the nodes $\Gamma$ --- or equivalently does not vanish at $\Gamma$.

Since by assumption $f_1$ is an immersion
and there are no other nodes where $f(C)$ and $f(D)$ meet besides $\Gamma$,
to see that $f$
is a limit of immersions of smooth curves, it remains to note in addition that
the generic section of $N_H|_D(\Gamma) \simeq \oo_D(1)(\Gamma)$
separates the points of $D$ identified under $f_2$ --- which is true by assumption that $\oo_D(1)(\Gamma)$ is very ample
away from $\Gamma$.

To finish the proof, it thus suffices to check $H^1(\oo_D(1)(\Gamma)) = 0$,
and that the generic section of $\oo_D(1)(\Gamma)$ does not vanish at any point $p \in \Gamma$.
Equivalently, it suffices to check $H^1(\oo_D(1)(\Gamma)(-p)) = 0$ for $p \in \Gamma$.
Since $f_2$ is a general BN-curve, we obtain
\[\dim H^1(\oo_D(1)) = \max(0, g - d + (r - 1)) \leq n - 1.\]
Twisting by $\Gamma \smallsetminus \{p\}$, which is a set of $n - 1$ general points, we therefore obtain
\[H^1(\oo_D(1)(\Gamma \smallsetminus \{p\})) = 0,\]
as desired.
\end{proof}

\begin{lm} \label{lm:hir}
Let $k \geq 1$ be an integer, $\iota \colon H \hookrightarrow \pp^r$ ($r \geq 3$) be a hyperplane,
and $(f_1 \colon C \to \pp^r, \Gamma_1)$ and
\mbox{$(f_2 \colon D \to H, \Gamma_2)$} be marked curves,
both passing through a set $\Gamma \subset H \subset \pp^r$ of $n \geq 1$ points.

Assume that $f_2$ is a general BN-curve of degree $d$ and genus $g$ to $H$,
that $\Gamma_2$ is a general collection of $n$ points on $D$, and that $f_1$ is transverse
to $H$ along $\Gamma$.
Suppose moreover that:
\begin{enumerate}
\item The bundle $N_{f_2}(-k)$ satisfies interpolation.
\item We have 
$H^1(N_{f_1}(-k)) = 0$.
\item We have
\[(r - 2) n \leq rd - (r - 4)(g - 1) - k \cdot (r - 2) d.\]
\item We have
\[n \geq \begin{cases}
g & \text{if $k = 1$;} \\
g - 1 + (k - 1)d & \text{if $k > 1$.}
\end{cases}\]
\end{enumerate}
Then $f \colon C \cup_\Gamma D \to \pp^r$ satisfies
\[H^1(N_f(-k)) = 0.\]
\end{lm}

\begin{proof}
Since $N_{f_2}(-k)$ satisfies interpolation by assumption and
\[(r - 2) n \leq \chi(N_{f_2}(-k)) = rd - (r - 4)(g - 1) - k \cdot (r - 2) d,\]
we conclude that $H^1(N_{f_2}(-k)(-\Gamma)) = 0$.
Since $H^1(N_{f_1} (-k)) = 0$ by assumption,
to apply Lemma~\ref{hyp-glue} it remains to check
\[H^1(\oo_D(1 - k)(\Gamma)) = 0.\]
It is therefore sufficient for
\[n = \#\Gamma \geq \dim H^1(\oo_D(1 - k)) = \begin{cases}
g & \text{if $k = 1$;} \\
g - 1 + (k - 1)d & \text{if $k > 1$.}
\end{cases}\]
But this is precisely our final assumption.
\end{proof}

\section{Curves of Large Genus \label{sec:hir-2}}

In this section, we will deal with a number of our special
cases, of larger genus. Taking care of these cases separately
is helpful --- since in the remaining cases, we will not
have to worry about whether our curve is a BN-curve, thanks to
results of~\cite{iliev} and~\cite{keem} on the irreducibility
of the Hilbert scheme of curves.

\begin{lm} \label{bn3}
Let $H \subset \pp^3$ be a plane, $\Gamma \subset H \subset \pp^3$ a set of $6$ general points,
$(f_1 \colon C \to \pp^3, \Gamma_1)$ a general marked BN-curve passing through $\Gamma$
of degree and genus one of
\[(d, g) \in \{(6, 1), (7, 2), (8, 4), (9, 5), (10, 6)\},\]
and $(f_2 \colon D \to H, \Gamma_2)$
a general marked canonical curve
passing through $\Gamma$.
Then $f \colon C \cup_\Gamma D \to \pp^3$ is a BN-curve which satisfies $H^1(N_f) = 0$.
\end{lm}
\begin{proof}
Note that the conclusion is an open condition; we may therefore freely specialize $(f_1, \Gamma_1)$.
Write $\Gamma = \{s, t, u, v, w, x\}$.

In the case $(d, g) = (6, 1)$, we specialize $(f_1, \Gamma_1)$
to 
$(f_1^\circ \colon C^\circ = C_1 \cup_p C_2 \cup_{\{q, r\}} C_3 \to \pp^3, \Gamma_1^\circ)$,
where $f_1^\circ|_{C_1}$ is a conic, $f_1^\circ|_{C_2}$ is a line with $C_2$ joined to $C_1$
at one point $p$, and $f_1^\circ|_{C_3}$ is a rational normal curve with $C_3$ joined to $C_1$ at two points $\{q, r\}$;
note that $f_1^\circ$ is a BN-curve by (iterative application of) Proposition~\ref{prop:glue}.
We suppose that $(f_1^\circ|_{C_1}, \Gamma_1^\circ \cap C_1)$ passes through $\{s, t\}$,
while $(f_1^\circ|_{C_2}, \Gamma_1^\circ \cap C_2)$ passes through $u$,
and $(f_1^\circ|_{C_3}, \Gamma_1^\circ \cap C_3)$ passes through $\{v, w, x\}$;
it is clear this can be done so $\{s, t, u, v, w, x\}$ are general.
Writing
\[f^\circ \colon C^\circ \cup_\Gamma D = C_2 \cup_{\{p, u\}} C_3 \cup_{\{q, r, v, w, x\}} (C_1 \cup_{\{s, t\}} D) \to \pp^3,\]
it suffices by Propositions~\ref{prop:glue} and~\ref{prop:interior} to
show that $f^\circ|_{C_1 \cup D}$ is a BN-curve which satisfies $H^1(N_{f^\circ|_{C_1 \cup D}}) = 0$.

For $(d, g) = (8, 4)$, we specialize $(f_1, \Gamma_1)$ to
$(f_1^\circ \colon C^\circ = C_1 \cup_{\{p, q, r\}} C_2 \cup_{\{y, z, a\}} C_3 \to \pp^3, \Gamma_1^\circ)$,
where $f_1^\circ|_{C_1}$ is a conic, and
$f_1^\circ|_{C_2}$ and $f_1^\circ|_{C_3}$ are rational normal curves,
with both $C_2$ and $C_3$ joined to $C_1$ at $3$ points (at $\{p, q, r\}$ and $\{y, z, a\}$ respectively);
note that $f_1^\circ$ is a BN-curve by (iterative application of) Proposition~\ref{prop:glue}.
We suppose that $(f_1^\circ|_{C_1}, \Gamma_1^\circ \cap C_1)$ passes through $\{s, t\}$,
while $(f_1^\circ|_{C_2}, \Gamma_1^\circ \cap C_2)$ passes through $\{u, v\}$,
and $(f_1^\circ|_{C_3}, \Gamma_1^\circ \cap C_3)$ passes through $\{w, x\}$;
it is clear this can be done so $\{s, t, u, v, w, x\}$ are general.
Writing
\[f^\circ \colon C^\circ \cup_\Gamma D = C_2 \cup_{\{p, q, r, u, v\}} C_3 \cup_{\{w, x, y, z, a\}} (C_1 \cup_{\{s, t\}} D) \to \pp^3,\]
it again suffices by Propositions~\ref{prop:glue} and~\ref{prop:interior} to
show that $f^\circ|_{C_1 \cup D}$ is a BN-curve which satisfies $H^1(N_{f^\circ|_{C_1 \cup D}}) = 0$.

For this, we first note that $f^\circ|_{C_1 \cup D}$ is a curve of degree $6$ and genus $4$,
and that the moduli space of smooth curves of degree $6$ and genus $4$ in $\pp^3$ is
irreducible (they are all canonical curves).
Moreover, by Lemma~\ref{smoothable} (c.f.\ Remark~\ref{very-ample-away} and note that
$\oo_D(1) \simeq K_D$ is very ample),
$f^\circ|_{C_1 \cup D}$ is a limit
of immersions of smooth curves, and satisfies
$H^1(N_{f^\circ|_{C_1 \cup D}}) = 0$; this completes the proof.
\end{proof}

\begin{lm} \label{bn4}
Let $H \subset \pp^4$ be a hyperplane, $\Gamma \subset H \subset \pp^4$ a set of $7$ general points,
\mbox{$(f_1 \colon C \to \pp^4, \Gamma_1)$} a general marked BN-curve passing through $\Gamma$
of degree and genus one of
\[(d, g) \in \{(7, 3), (8, 4), (9, 5)\},\]
and $(f_2 \colon D \to H, \Gamma_2)$
a general marked BN-curve of degree~$9$ and genus~$6$
passing through $\Gamma$.
Then $f \colon C \cup_\Gamma D \to \pp^4$ is a BN-curve which satisfies $H^1(N_f) = 0$.
\end{lm}
\begin{proof}
Again, we note that the conclusion is an open statement; we may therefore freely
specialize $(f_1, \Gamma_1)$. Write $\Gamma = \{t, u, v, w, x, y, z\}$.

First, we claim it suffices to consider the case $(d, g) = (7, 3)$.
Indeed, suppose $(f_1, \Gamma_1)$ is a marked BN-curve of degree $7$ and genus $3$ passing through $\Gamma$.
Then $f_1' \colon C \cup_{\{p, q\}} L \to \pp^4$ and $f_1'' \colon C \cup_{\{p, q\}} L \cup_{\{r, s\}} L' \to \pp^4$
(where $f_1'|_L$ and $f_1''|_L$ and $f_1''|_{L'}$ are lines with $L$ and $L'$ joined to $C$ at two points)
are BN-curves by Proposition~\ref{prop:glue}, of degree and genus
$(8, 4)$ and $(9, 5)$ respectively. If $f \colon C \cup_\Gamma D \to \pp^4$ is a BN-curve
with $H^1(N_f) = 0$, then invoking Propositions~\ref{prop:glue}
and~\ref{prop:interior}, both
\begin{gather*}
f' \colon (C \cup_{\{p, q\}} L) \cup_\Gamma D = (C \cup_\Gamma D) \cup_{\{p, q\}} L \to \pp^4 \\
\text{and} \quad f'' \colon (C \cup_{\{p, q\}} L \cup_{\{r, s\}} L') \cup_\Gamma D = (C \cup_\Gamma D) \cup_{\{p, q\}} L \cup_{\{r, s\}} L' \to \pp^4
\end{gather*}
are BN-curves, which satisfy
$H^1(N_{f'}) = H^1(N_{f''}) = 0$.

So it remains to consider the case $(d, g) = (7, 3)$.
In this case, we begin by specializing $(f_1, \Gamma_1)$ to
$(f_1^\circ \colon C^\circ = C' \cup_{\{p, q\}} L \to \pp^4, \Gamma_1^\circ)$,
where $f_1^\circ|_{C'}$ is a general BN-curve of degree $6$ and genus $2$, and $f_1^\circ|_L$ is a line with $L$
joined to $C'$ at two points $\{p, q\}$.
We suppose that $(f_1^\circ|_L, \Gamma_1^\circ \cap L)$ passes through $t$,
while $(f_1^\circ|_{C'}, \Gamma_1^\circ \cap C')$ passes through $\{u, v, w, x, y, z\}$;
we must check this can be done so $\{t, u, v, w, x, y, z\}$ are general.
To see this, it suffices to show
that the intersection $f_1^\circ(C') \cap H$ and the points $\{f_1^\circ(p), f_1^\circ(q)\}$
independently general. In other words,
we are claiming that the map
\[\{(f_1^\circ|_{C'} \colon C' \to \pp^4, p, q) : p, q \in C'\} \mapsto (f_1^\circ|_{C'}(C') \cap H, f_1^\circ|_{C'}(p), f_1^\circ|_{C'}(q))\]
is dominant; equivalently, that it is smooth at a generic point $(f_1^\circ|_{C'}, p, q)$.
But the obstruction to smoothness lies in
$H^1(N_{f_1^\circ|_{C'}}(-1)(-p-q)) = 0$, which vanishes because
because $N_{f_1^\circ|_{C'}}(-1)$ satisfies interpolation by Lemma~\ref{from-inter}.

We next specialize $(f_2, \Gamma_2)$ to $(f_2^\circ \colon D^\circ = D' \cup_\Delta D_1 \to H, \Gamma_2^\circ)$,
where $f_2^\circ|_{D'}$ is a general BN-curve of degree
$6$ and genus $3$, and $f_2^\circ|_{D_1}$ is a rational normal curve with $D_1$
joined to $D'$ at a set $\Delta$ of $4$ points;
note that $f_2^\circ$ is a BN-curve by Proposition~\ref{prop:glue}.
We suppose that $(f_2^\circ|_{D_1}, \Gamma_2^\circ \cap D_1)$ passes through $t$,
while $(f_2^\circ|_{D'}, \Gamma_2^\circ \cap D')$ passes through $\{u, v, w, x, y, z\}$;
this can be done so $\{t, u, v, w, x, y, z\}$ are still general,
since $f_2^\circ|_{D'}$ (marked at general points of the source) can pass through $6$ general points,
while $(f_2^\circ|_{D_1}$ (again marked at general points of the source) can pass through $5$ general points,
both by Corollary~1.4 of~\cite{firstpaper}.

In addition, $(f_2^\circ|_{D_1}, (\hat{t} = \Gamma_2^\circ \cap D_1) \cup \Delta)$ has a general tangent line at $t$;
to see this, note that we are asserting that the map sending $(f_2^\circ|_{D_1}, \hat{t} \cup \Delta)$
to its tangent line at $t$ is dominant;
equivalently, that it is smooth at a generic point of the source.
But the obstruction to smoothness lies in
$H^1(N_{f_2^\circ|_{D_1}}(-\Delta - 2\hat{t} \, ))$, which vanishes because
$N_{f_2^\circ|_{D_1}}(-2\hat{t} \, )$ satisfies interpolation by combining
Propositions~\ref{inter} and~\ref{twist}.

As $\{p, q\} \subset C'$ is general, we thus know that the tangent lines
to $(f_2^\circ|_{D_1}, \hat{t} \cup \Delta)$ at $t$, and to $(f_1^\circ|_{C'}, \{p, q\})$ at $f_1^\circ(p)$ and $f_1^\circ(q)$,
together span all of $\pp^4$; write $\bar{t}$, $\bar{p}$, and $\bar{q}$ for points on each of these tangent lines
distinct from $t$, $f_1^\circ(p)$, and $f_1^\circ(q)$ respectively.
We then use the exact sequences
\begin{gather*}
0 \to N_{f^\circ}|_L(-\hat{t} - p - q) \to N_{f^\circ} \to N_{f^\circ}|_{C' \cup D^\circ} \to 0 \\
0 \to N_{f^\circ|_{C' \cup D^\circ}} \to N_{f^\circ}|_{C' \cup D^\circ} \to * \to 0,
\end{gather*}
where $*$ is a punctual sheaf (which in particular has vanishing $H^1$).
Write $H_t$ for the hyperplane spanned by $f_1^\circ(L)$, $\bar{p}$, and $\bar{q}$;
and $H_p$ for the hyperplane spanned by $f_1^\circ(L)$, $\bar{t}$, and $\bar{q}$;
and $H_q$ for the hyperplane spanned by $f_1^\circ(L)$, $\bar{t}$, and $\bar{p}$.
Then $f_1^\circ(L)$ is the complete intersection $H_t \cap H_p \cap H_q$, and so we get a decomposition
\[N_{f^\circ}|_L \simeq N_{f_1^\circ(L) / H_t}(\hat{t} \, ) \oplus N_{f_1^\circ(L) / H_p}(p) \oplus N_{f_1^\circ(L) / H_q}(q),\]
which upon twisting becomes
\[N_{f^\circ}|_L(-\hat{t} - p - q) \simeq N_{f_1^\circ(L) / H_t}(-p-q) \oplus N_{f_1^\circ(L) / H_p}(-\hat{t}-q) \oplus N_{f_1^\circ(L) / H_q}(-\hat{t} - p).\]
Note that $N_{f_1^\circ(L) / H_t}(-p-q) \simeq \oo_L(-1)$ has vanishing $H^1$, and similarly for the other factors;
consequently, $H^1(N_{f^\circ}|_L(-\hat{t} - p - q)) = 0$. We conclude that
$H^1(N_{f^\circ}) = 0$ provided that $H^1(N_{f^\circ|_{C' \cup D^\circ}}) = 0$.
Moreover, 
writing $C' \cup_{\{u, v, w, x, y, z\}} D^\circ = D_1 \cup_\Delta (D' \cup_{\{u, v, w, x, y, z\}} C')$
and applying Proposition~\ref{prop:interior}, we know that $H^1(N_{f^\circ|_{C' \cup D^\circ}}) = 0$ provided that
$H^1(N_{f^\circ|_{C' \cup D'}}) = 0$.
And if $f^\circ|_{C' \cup D'}$ is a BN-curve,
then
$f^\circ \colon (C' \cup_{\{u, v, w, x, y, z\}} D') \cup_{\Delta \cup \{p, q\}} (D_1 \cup_t L) \to \pp^4$
is a BN-curve too by Proposition~\ref{prop:glue},
Putting this all together, it is sufficient to show that
$f^\circ|_{C' \cup D'}$ is a BN-curve which satisfies $H^1(N_{f^\circ|_{C' \cup D'}}) = 0$.

Our next step is to specialize $(f_1^\circ|_{C'}, \Gamma_1^\circ \cap C')$ to
$(f_1^{\circ\circ} \colon C^{\circ\circ} = C'' \cup_{\{r, s\}} L' \to \pp^4, \Gamma_1^{\circ\circ})$,
where $f_1^{\circ\circ}|_{C''}$
is a general BN-curve of degree~$5$ and genus~$1$, and $f_1^{\circ\circ}|_{L'}$ is a line
with $L'$ joined to $C''$ at two points $\{r, s\}$.
We suppose that $(f_1^{\circ\circ}|_{C''}, \Gamma_1^{\circ\circ} \cap C'')$ passes through $u$,
while $(f_1^{\circ\circ}|_{C''}, \Gamma_1^{\circ\circ} \cap C'')$ passes through $\{v, w, x, y, z\}$;
as before this can be done so $\{u, v, w, x, y, z\}$ are general.
We also specialize $(f_2^\circ|_{D'}, \Gamma_2^\circ \cap D')$ to
$(f_2^{\circ\circ} \colon D'' \cup_\Delta D_2 \to \pp^4, \Gamma_2^{\circ\circ})$,
where $f_2^{\circ\circ}|_{D''}$ and $f_2^{\circ\circ}|_{D_2}$ are both rational normal curves
with $D''$ and $D_2$ joined at a set $\Delta$ of $4$ general points.
We suppose that $(f_2^{\circ\circ}|_{D_2}, \Gamma_2^{\circ\circ} \cap D_2)$
passes through $u$,
while $(f_2^{\circ\circ}|_{D''}, \Gamma_2^{\circ\circ} \cap D'')$
passes through $\{v, w, x, y, z\}$;
as before this can be done so $\{u, v, w, x, y, z\}$ are general.
The same argument as above, mutatis mutandis, then implies it is sufficient to show that
$f^{\circ\circ}|_{C'' \cup D''} \colon C'' \cup_{\{v, w, x, y, z\}} D'' \to \pp^4$ is a BN-curve which satisfies
$H^1(N_{f^{\circ\circ}|_{C'' \cup D''}}) = 0$.

For this, we first note that $f^{\circ\circ}|_{C'' \cup D''}$ is a curve of degree $8$ and genus $5$,
and that the moduli space of smooth curves of degree $8$ and genus $5$ in $\pp^4$ is
irreducible (they are all canonical curves).
To finish the proof, it suffices to note by Lemma~\ref{smoothable} that
$f^{\circ\circ}|_{C'' \cup D''}$ is a limit of immersions of smooth curves and satisfies
$H^1(N_{f^{\circ\circ}|_{C'' \cup D''}}) = 0$.
\end{proof}

\begin{cor} \label{smooth-enough} To prove the main theorems (excluding the ``conversely\ldots'' part),
it suffices to show the existence of (nondegenerate immersions of) smooth curves, of the following degrees
and genera, which satisfy the conclusions:
\begin{enumerate}
\item For Theorem~\ref{main-3}, it suffices to show the existence of smooth curves, satisfying
the conclusions, where $(d, g)$ is one of:
\[(5, 1), \quad (7, 2), \quad (6, 3), \quad (7, 4), \quad (8, 5), \quad (9, 6), \quad (9, 7).\]
\item For Theorem~\ref{main-3-1}, it suffices to show the existence of smooth curves, satisfying
the conclusions, where $(d, g)$ is one of:
\[(7, 5), \quad (8, 6).\]
\item For Theorem~\ref{main-4}, it suffices to show the existence of smooth curves, satisfying
the conclusions, where $(d, g)$ is one of:
\[(9, 5), \quad (10, 6), \quad (11, 7), \quad (12, 9).\]
\end{enumerate}
(And in constructing the above smooth curves, we may suppose the
corresponding theorem holds for curves of smaller genus.)
\end{cor}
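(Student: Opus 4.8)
The plan is to assemble the corollary out of three ingredients that are already in place: the finite lists produced by Corollary~\ref{finite}, the large-genus gluing Lemmas~\ref{bn3} and~\ref{bn4}, and the hyperplane-gluing Lemma~\ref{lm:hir}, together with the irreducibility results of~\cite{iliev} and~\cite{keem}. (Theorems~\ref{main-2} and~\ref{main-2-1} are already established by Proposition~\ref{p2}, so only Theorems~\ref{main-3}, \ref{main-3-1}, \ref{main-4} are at issue.) By Corollary~\ref{finite} it suffices to verify each main theorem for the $(d,g)$ in its finite list there, assuming the theorem for smaller genus. The pairs in those lists that do not appear in the present shorter lists are exactly $(10,9),(11,10),(12,12),(13,13),(14,14)$ for Theorem~\ref{main-3} and $(16,15),(17,16),(18,17)$ for Theorem~\ref{main-4} (for Theorem~\ref{main-3-1} the two lists coincide). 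I would show these ``extra'' pairs follow from the inductive hypothesis via Section~\ref{sec:hir-2}, and that for the remaining pairs it is enough to exhibit a single smooth nondegenerate curve with the required twisted $H^1$-vanishing.

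First I would dispose of the large-genus pairs. For Theorem~\ref{main-3}, the pairs $(10,9),(11,10),(12,12),(13,13),(14,14)$ are obtained from $(6,1),(7,2),(8,4),(9,5),(10,6)$ by gluing on a genus-$3$ plane canonical curve (a plane quartic) through $6$ general points of a plane $H\subset\pp^3$; by Lemma~\ref{bn3} the resulting reducible $f\colon C\cup_\Gamma D\to\pp^3$ is a BN-curve, and its hypotheses are exactly those needed to invoke Lemma~\ref{lm:hir} with $r=3$, $k=2$, $n=6$ on the very same $f$. Hypothesis~(1) holds since $N_{f_2}(-2)\simeq\oo_D(2)$ is a nonspecial line bundle, hence satisfies interpolation; hypothesis~(2), namely $H^1(N_{f_1}(-2))=0$, is Theorem~\ref{main-3} for $(6,1),\dots,(10,6)$, which holds by the inductive hypothesis since those genera are smaller; and hypotheses~(3),(4) are the numerical inequalities $6\le 6$ and $6\ge 6$, which I would check hold (with equality). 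Thus $H^1(N_f(-2))=0$ for a BN-curve $f$, and since this vanishing is open on the irreducible space of BN-curves of this degree and genus, Theorem~\ref{main-3} holds for these pairs. The pairs $(16,15),(17,16),(18,17)$ for Theorem~\ref{main-4} are handled identically, using Lemma~\ref{bn4} in place of Lemma~\ref{bn3} (gluing a degree-$9$, genus-$6$ curve in a hyperplane through $7$ general points, so $(d,g)\mapsto(d+9,g+12)$) and Lemma~\ref{lm:hir} with $r=4$, $k=1$, $n=7$, where hypothesis~(1) follows from Lemma~\ref{from-inter} and hypothesis~(2) is Theorem~\ref{main-4} for $(7,3),(8,4),(9,5)$; the inequalities~(3),(4) read $14\le 18$ and $7\ge 6$.

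Next I would reduce the surviving pairs from ``general BN-curve'' to ``some smooth nondegenerate curve.'' The point is that for each $(d,g,r)$ in the short lists, the Hilbert scheme of smooth nondegenerate curves of degree $d$ and genus $g$ in $\pp^r$ is irreducible by~\cite{keem} (for $r=3$) and~\cite{iliev} (for $r=4$); since BN-curves of this degree and genus exist (Brill--Noether) and carry a family of the expected dimension $\chi(N_f)=(r+1)d-(r-3)(g-1)$, they are dense in this irreducible Hilbert scheme, so its general member is a BN-curve. Hence if $f_0$ is any smooth nondegenerate curve of degree $d$ and genus $g$ in $\pp^r$ with $H^1(N_{f_0}(-n))=0$, then $H^1(N_{f_0})=0$ a fortiori (twisting up by the base-point-free $\oo(n)$ only decreases $H^1$), so $f_0$ is a smooth point of the Hilbert scheme of the expected dimension; by irreducibility the general member is then a smooth BN-curve, and by openness it too satisfies $H^1(N(-n))=0$ --- which is the conclusion of the relevant main theorem. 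Finally, since the whole argument --- the large-genus step above together with the reductions of Corollary~\ref{finite} --- is an induction on $g$, we may indeed assume each main theorem for smaller genus while constructing these curves.

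The main obstacle is bookkeeping rather than a single hard step: one must check that the pairs removed from the lists of Corollary~\ref{finite} are precisely the images of non-exceptional pairs under the gluing operations of Lemmas~\ref{bn3} and~\ref{bn4}, verify that the numerical hypotheses of Lemma~\ref{lm:hir} hold on the nose in every case, and confirm that the ranges of $(d,g)$ occurring in the short lists are genuinely covered by the irreducibility theorems of~\cite{iliev} and~\cite{keem}. The one conceptual point requiring care is the passage from ``a smooth curve with $H^1(N(-n))=0$ exists'' to ``the general BN-curve satisfies $H^1(N(-n))=0$'', which rests on the irreducibility of the Hilbert scheme together with the density of BN-curves in it.
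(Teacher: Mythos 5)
Your proposal is correct and follows essentially the same route as the paper: the paper likewise disposes of $(10,9),(11,10),(12,12),(13,13),(14,14)$ via Lemmas~\ref{bn3} and~\ref{lm:hir} (with Proposition~\ref{p2} supplying interpolation for the plane quartic) and of $(16,15),(17,16),(18,17)$ via Lemmas~\ref{bn4}, \ref{lm:hir}, and~\ref{from-inter}, then invokes \cite{keem} and \cite{iliev} to replace ``BN-curve'' by ``smooth irreducible nondegenerate curve'' in the remaining cases. Your numerical verifications of the hypotheses of Lemma~\ref{lm:hir} and your density argument for the BN-locus inside the unique Hilbert-scheme component are accurate elaborations of steps the paper leaves implicit.
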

\begin{proof}
By Lemmas~\ref{bn3} and~\ref{lm:hir}, and Proposition~\ref{p2}, we know that Theorem~\ref{main-3}
holds for $(d, g)$ one of
\[(10, 9), \quad (11, 10), \quad (12, 12), \quad (13, 13), \quad (14, 14).\]
Similarly, by Lemmas~\ref{bn4}, \ref{lm:hir}, and~\ref{from-inter}, we know that Theorem~\ref{main-4}
holds for $(d, g)$ one of
\[(16, 15), \quad (17, 16), \quad (18, 17).\]
Eliminating these cases from the lists in Corollary~\ref{finite},
we obtain the given lists of pairs $(d, g)$.

Moreover --- in each of the cases appearing in the statement
of this corollary --- results of \cite{keem} (for $r = 3$) and \cite{iliev} (for $r = 4$)
state that the Hilbert scheme of curves of degree $d$ and genus $g$ in $\pp^r$
has a \emph{unique} component whose points represent smooth irreducible nondegenerate curves.
The condition that our curve be a BN-curve may thus be replaced
with the condition that our curve be smooth irreducible nondegenerate.
\end{proof}

\section{More Curves in a Hyperplane \label{sec:hir-3}}

In this section, we give several more applications
of the technique developed in the previous two sections. Note that from
Corollary~\ref{smooth-enough},
it suffices to show the existence of curves satisfying
the desired conclusions which are limits of immersions of smooth curves;
it not necessary to check that these
curves are BN-curves.

\begin{lm} \label{lm:ind:3} Suppose $N_f(-2)$ satisfies interpolation, where $f \colon C \to \pp^3$ is a general BN-curve
of degree $d$ and genus $g$ to $\pp^3$. Then the same is true for some smooth curve of
degree and genus:
\begin{enumerate}
\item \label{33} $(d + 3, g + 3)$ (provided $d \geq 3$);
\item \label{42} $(d + 4, g + 2)$ (provided $d \geq 3$);
\item \label{46} $(d + 4, g + 6)$ (provided $d \geq 5$).
\end{enumerate}
\end{lm}
\begin{proof}
We apply Lemma~\ref{lm:hir} for $f_2$ a curve of degree up to $4$ (and note that
$N_{f_2}(-2)$ satisfies interpolation by Proposition~\ref{p2}), namely:
\begin{enumerate}
\item $(d_2, g_2) = (3, 1)$ and $n = 3$;
\item $(d_2, g_2) = (4, 0)$ and $n = 3$;
\item $(d_2, g_2) = (4, 2)$ and $n = 5$.
\end{enumerate}
Finally, we note that $C \cup_\Gamma D \to \pp^r$ as above is a limit
of immersions of smooth curves by Lemma~\ref{smoothable}.
\end{proof}

\begin{cor} Suppose that Theorem~\ref{main-3} holds for $(d, g) = (5, 1)$. Then
Theorem~\ref{main-3} holds for $(d, g)$ one of:
\[(7, 2), \quad (6, 3), \quad (9, 6), \quad (9, 7).\]
\end{cor}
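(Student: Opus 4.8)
The plan is to deduce all four cases directly from Lemma~\ref{lm:ind:3}, using as inputs the genus-$0$ cases supplied by Lemma~\ref{from-inter} together with the hypothesized case $(5,1)$. First I would record the standard translation: by Corollary~\ref{smooth-enough}, to prove Theorem~\ref{main-3} for any one of the pairs $(7,2)$, $(6,3)$, $(9,6)$, $(9,7)$ it is enough to exhibit a smooth curve of that degree and genus in $\pp^3$ with $H^1(N_f(-2))=0$. Since $\chi(N_f(-2))=0$ for a smooth curve in $\pp^3$ (by Riemann--Roch, as used repeatedly above), this vanishing is equivalent to saying that $N_f(-2)$ satisfies interpolation: interpolation forces $H^1(N_f(-2))=0$, and conversely $H^1(N_f(-2))=0$ forces $H^0(N_f(-2))=0$ and hence $H^0(N_f(-2)(-D))=0$ for every effective $D$. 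So it suffices, in each of the four cases, to produce a smooth curve whose twisted normal bundle satisfies interpolation --- which is precisely what Lemma~\ref{lm:ind:3} delivers.

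Next I would assemble the base cases. By the first bullet of Lemma~\ref{from-inter}, $N_f(-2)$ satisfies interpolation for a general BN-curve in $\pp^3$ of degree $3$ and genus $0$, and likewise for one of degree $5$ and genus $0$ (both satisfy the required $d\geq g+r$). By hypothesis --- through the translation above --- $N_f(-2)$ also satisfies interpolation for a general BN-curve of degree $5$ and genus $1$. I would then apply Lemma~\ref{lm:ind:3} four times: part~\ref{42} to $(d,g)=(3,0)$ (legitimate since $3\geq 3$) produces the case $(7,2)$; part~\ref{33} to $(d,g)=(3,0)$ produces $(6,3)$; part~\ref{46} to $(d,g)=(5,0)$ (legitimate since $5\geq 5$) produces $(9,6)$; and part~\ref{46} to $(d,g)=(5,1)$ produces $(9,7)$. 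In each case Lemma~\ref{lm:ind:3} outputs a smooth curve of the indicated degree and genus with $N_f(-2)$ satisfying interpolation, so by the first paragraph Theorem~\ref{main-3} holds for that pair.

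I do not expect a genuine obstacle: the whole content is in picking the right base cases and the right parts of Lemma~\ref{lm:ind:3}, and the verifications are purely numerical --- that each of $(7,2)$, $(6,3)$, $(9,6)$, $(9,7)$ is among the pairs listed in Corollary~\ref{smooth-enough} (so a smooth example suffices), and that the side conditions $d\geq 3$ (for parts~\ref{33} and~\ref{42}) and $d\geq 5$ (for part~\ref{46}) hold for the chosen base pairs. One deliberate choice worth flagging is routing $(9,6)$ through the genus-$0$ case $(5,0)$ rather than through $(6,3)$: this keeps every application of Lemma~\ref{lm:ind:3} anchored to a case already established for the \emph{general} BN-curve, so I never need to upgrade ``some smooth curve satisfies interpolation'' to ``the general BN-curve does'' (which would require irreducibility of the relevant Hilbert scheme). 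With this routing the hypothesis on $(5,1)$ is used only to obtain $(9,7)$, which is consistent with $(5,1)$ being the only admissible predecessor of $(9,7)$ under the three operations of Lemma~\ref{lm:ind:3}.
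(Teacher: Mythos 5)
Your proof is correct and follows essentially the same route as the paper: Corollary~\ref{smooth-enough} reduces each case to exhibiting one smooth curve with $H^1(N_f(-2))=0$, and Lemma~\ref{lm:ind:3} produces these from base cases supplied by Lemma~\ref{from-inter} and the hypothesized $(5,1)$. The only deviation is $(9,6)$: the paper obtains it from the just-established $(6,3)$ via part~\ref{33}, which implicitly uses the irreducibility results of \cite{keem} (built into Corollary~\ref{smooth-enough}) to promote ``some smooth curve of degree $6$ and genus $3$ satisfies the conclusion'' to ``the general BN-curve does'' before feeding it back into Lemma~\ref{lm:ind:3}; you instead apply part~\ref{46} to the rational quintic $(5,0)$, which keeps every application of the lemma anchored to a case where the hypothesis is known for the general BN-curve directly. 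Both routes are valid --- the numerical side conditions ($d\geq 5$ for part~\ref{46}, and conditions (3) and (4) of Lemma~\ref{lm:hir}, which depend only on $(d_2,g_2,n,k)$) check out for your choice --- and your routing is marginally more self-contained, at the cost of nothing.
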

\begin{proof}
For $(d, g) = (7, 2)$, we apply Lemma~\ref{lm:ind:3}, part~\ref{42}
(taking as our inductive hypothesis the truth of Theorem~\ref{main-3} for $(d', g') = (3, 0)$).

Similarly, for $(d, g) = (6, 3)$ and $(d, g) = (9, 6)$, we apply
Lemma~\ref{lm:ind:3}, part~\ref{33}
(taking as our inductive hypothesis the truth of Theorem~\ref{main-3} for $(d', g') = (3, 0)$,
and the just-established $(d', g') = (6, 3)$, respectively).

Finally, for $(d, g) = (9, 7)$, we apply Lemma~\ref{lm:ind:3}, part~\ref{46}
(taking as our inductive hypothesis the yet-to-be-established truth of Theorem~\ref{main-3}
for $(d', g') = (5, 1)$).
\end{proof}

\begin{lm} Suppose that Theorem~\ref{main-3-1} holds for $(d, g) = (7, 5)$.
Then Theorem~\ref{main-3-1} holds for $(d, g) = (8, 6)$.
\end{lm}
\begin{proof}
We simply apply Lemma~\ref{glue} with $f\colon C \cup_\Gamma D \to \pp^3$
such that $f|_C$ is a general BN-curve of degree $7$ and genus $5$,
and $f|_D$ is a line, with $C$ joined to $D$ at a set $\Gamma$ of two points.
\end{proof}

\begin{lm} \label{lm:ind:4} Suppose $N_f(-1)$ satisfies interpolation, where $f$ is a general BN-curve
of degree $d$ and genus $g$ in $\pp^4$. Then the same is true for some smooth curve of
degree $d + 6$ and genus $g + 6$, provided $d \geq 4$.
\end{lm}
\begin{proof}
We apply Lemmas~\ref{lm:hir} and~\ref{smoothable}
for $f_2$ a curve of degree $6$ and genus $3$ to $\pp^3$,
with $n = 4$.
Note that
$N_{f_2}(-1)$ satisfies interpolation by Propositions~\ref{inter} and~\ref{twist}.
\end{proof}

\begin{lm} Theorem~\ref{main-4} holds for $(d, g)$ one of:
\[(10, 6), \quad (11, 7), \quad (12, 9).\]
\end{lm}
\begin{proof}
We simply apply Lemma~\ref{lm:ind:4}
(taking as our inductive hypothesis the truth of Theorem~\ref{main-4} for
$(d', g') = (d - 6, g - 6)$).
\end{proof}

To prove the main theorems (excluding the ``conversely\ldots'' part),
it thus remains to produce five smooth curves:
\begin{enumerate}
\item For Theorem~\ref{main-3}, it suffices to find smooth curves, satisfying
the conclusions, of degrees and genera $(5, 1)$, $(7, 4)$, and $(8, 5)$.
\item For Theorem~\ref{main-3-1}, it suffices to find a smooth curve, satisfying
the conclusions, of degree $7$ and genus $5$.
\item For Theorem~\ref{main-4}, it suffices to find a smooth curve, satisfying
the conclusions, of degree $9$ and genus $5$.
\end{enumerate}

\section{Curves in Del Pezzo Surfaces \label{sec:in-surfaces}}

In this section, we analyze the normal bundles of certain curves
by specializing to immersions $f \colon C \hookrightarrow \pp^r$
of smooth curves whose images are contained in Del Pezzo
surfaces $S \subset \pp^r$ (where the Del Pezzo surface is embedded by
its complete anticanonical series).
Since $f$ will be an immersion, we shall identify $C = f(C)$ with its image,
in which case the normal bundle $N_f$ becomes the normal bundle $N_C$ of the image.
Our basic method in this section will be to use the normal bundle exact
sequence associated to $C \subset S \subset \pp^r$:
\begin{equation} \label{nb-exact}
0 \to N_{C/S} \to N_C \to N_S|_C \to 0.
\end{equation}
Since $S$ is a Del Pezzo surface, we have by adjunction an isomorphism
\begin{equation} \label{ncs}
N_{C/S} \simeq K_C \otimes K_S^\vee \simeq K_C(1).
\end{equation}

\begin{defi} \label{pic-res}
Let $S \subset \pp^r$ be a Del Pezzo surface, $k$ be an integer with $H^1(N_S(-k)) = 0$,
and $\theta \in \pic S$ be any divisor class.

Let $F$ be a general hypersurface of degree $k$.
We consider the moduli space $\mathcal{M}$ of pairs $(S', \theta')$,
with $S'$ a Del Pezzo surface containing $S \cap F$, and $\theta' \in \pic S'$.
Define $V_{\theta, k} \subseteq \pic(S \cap F)$
to be the subvariety obtained by restricting $\theta'$ to $S \cap F \subseteq S'$,
as $(S', \theta)$ varies over the component of $\mathcal{M}$ containing $(S, \theta)$.

Note that there is a unique such component, since $\mathcal{M}$ is smooth at $[(S, \theta)]$
thanks to our assumption that $H^1(N_S(-k)) = 0$.
\end{defi}

Our essential tool is given by the following lemma,
which uses the above normal bundle sequence together with the varieties
$V_{\theta, k}$ to analyze $N_C$.

\begin{lm} \label{del-pezzo}
Let $C \subset S \subset \pp^r$ be a general curve (of any fixed class)
in a general Del Pezzo surface $S \subset \pp^r$,
and $k$ be a natural number with $H^1(N_S(-k)) = 0$. Suppose that (for $F$ a general hypersurface of degree $k$):
\[\dim V_{[C], k} = \dim H^0(\oo_C(k - 1)) \quad \text{and} \quad H^1(N_S|_C(-k)) = 0,\]
and that the natural map
\[H^0(N_S(-k)) \to H^0(N_S|_C(-k))\]
is an isomorphism.
Then,
\[H^1(N_{C}(-k)) = 0.\]
\end{lm}
\begin{proof}
Twisting our earlier normal bundle exact sequence \eqref{nb-exact},
and using the isomorphism \eqref{ncs}, we obtain the exact sequence:
\[0 \to K_C(1-k) \to N_C(-k) \to N_S|_C(-k) \to 0.\]
This gives rise to a long exact sequence in cohomology:
\[\cdots \to H^0(N_C(-k)) \to H^0(N_S|_C(-k)) \to H^1(K_C(1 - k)) \to H^1(N_C(-k)) \to H^1(N_S|_C(-k)) \to \cdots.\]
Since $H^1(N_S|_C(-k)) = 0$ by assumption,
it suffices to show that the image of the natural map
$H^0(N_C(-k)) \to H^0(N_S|_C(-k))$ has codimension
\[\dim H^1(K_C(1 - k)) = \dim H^0(\oo_C(k - 1)) = \dim V_{[C], k}.\]

Because the natural map $H^0(N_S(-k)) \to H^0(N_S|_C(-k))$
is an isomorphism, we may interpret sections
of $N_S|_C(-k)$ as first-order deformations of the Del Pezzo surface $S$
fixing $S \cap F$.
So it remains to show that the space of such deformations
coming from a deformation of $C$ fixing $C \cap F$ has codimension
$\dim V_{[C], k}$.

The key point here is that deforming
$C$ on $S$ does not change its class $[C] \in \pic(S)$,
and every deformation of $S$
comes naturally with a deformation of the element $[C] \in \pic(S)$.
It thus suffices to prove that 
the space of first-order deformations of $S$ which leave invariant
the restriction $[C]|_{S \cap F} \in \pic(S \cap F)$
has codimension $\dim V_{[C], k}$.

But since the map $\mathcal{M} \to V_{[C], k}$
is smooth at $(S, [C])$, the vertical tangent space has codimension
in the full tangent space
equal to the dimension of the image.
\end{proof}

In applying Lemma~\ref{del-pezzo},
we will first consider the case where $S \subset \pp^3$ is a general cubic surface,
which is isomorphic to the blowup $\bl_\Gamma \pp^2$ of $\pp^2$ along a set
\[\Gamma = \{p_1, \ldots, p_6\} \subset \pp^2\]
of six general points. Recall that this a Del Pezzo surface,
which is to say that the embedding $\bl_\Gamma \pp^2 \simeq S \hookrightarrow \pp^3$
as a cubic surface is via the complete linear
system for the inverse of the canonical bundle:
\[-K_{\bl_\Gamma \pp^2} = 3L - E_1 - \cdots - E_6,\]
where $L$ is the class of a line in $\pp^2$ and $E_i$ is the exceptional divisor
in the blowup over $p_i$. Note that by construction,
\[N_S \simeq \oo_S(3).\]
In particular, $H^1(N_S(-1)) = H^1(N_S(-2)) = 0$ by Kodaira vanishing.

\begin{lm} \label{cubclass} Let $C \subset \bl_\Gamma \pp^2 \simeq S \subset \pp^3$ be a general curve of class either:
\begin{enumerate}
\item \label{74} $5L - 2E_1 - 2E_2 - E_3 - E_4 - E_5 - E_6$;
\item \label{85} $5L - 2E_1 - E_2 - E_3 - E_4 - E_5 - E_6$;
\item \label{86} $6L - E_1 - E_2 - 2E_3 - 2E_4 - 2E_5 - 2E_6$;
\item \label{75} $6L - E_1 - 2E_2 - 2E_3 - 2E_4 - 2E_5 - 2E_6$;
\end{enumerate}
Then $C$ is smooth and irreducible.
In the first two cases, $H^1(\oo_C(1)) = 0$.
\end{lm}
\begin{proof}
We first show the above linear series are basepoint-free.
To do this, we write each as a sum of terms which are evidently
basepoint-free:
\begin{align*}
5L - 2E_1 - 2E_2 - E_3 - E_4 - E_5 - E_6 &= (3L - E_1 - E_2 - E_3 - E_4 - E_5 - E_6) \\
&\qquad + (L - E_1) + (L - E_2) \\
5L - 2E_1 - E_2 - E_3 - E_4 - E_5 - E_6 &= (3L - E_1 - E_2 - E_3 - E_4 - E_5 - E_6) + (L - E_1) \\
6L - E_1 - E_2 - 2E_3 - 2E_4 - 2E_5 - 2E_6 &= (3L - E_1 - E_2 - E_3 - E_4 - E_5 - E_6) \\
&\qquad + L + (2L - E_3 - E_4 - E_5 - E_6) \\
6L - E_1 - 2E_2 - 2E_3 - 2E_4 - 2E_5 - 2E_6 &= (3L - E_1 - E_2 - E_3 - E_4 - E_5 - E_6) \\
&\qquad + (L - E_2) + (2L - E_3 - E_4 - E_5 - E_6).
\end{align*}

Since all our linear series are basepoint-free, the
Bertini theorem implies that $C$ is smooth. Moreover, by basepoint-freeness,
we know that $C$ does not contain any of our exceptional divisors.
We conclude that $C$ is a the proper transform in the blowup of
a curve $C_0 \subset \pp^2$. This curve satisfies:
\begin{itemize}
\item In case~\ref{74}, $C_0$ has exactly two nodes, at $p_1$ and $p_2$, and is otherwise smooth.
In particular, $C_0$ (and thus $C$) must be irreducible, since otherwise (by B\'ezout's theorem) it would have
at least $4$ nodes (where the components meet).

\item In case~\ref{85}, $C_0$ has exactly one node, at $p_1$, and is otherwise smooth.
As above, $C_0$ (and thus $C$) must be irreducible.

\item In case~\ref{86}, $C_0$ has exactly four nodes, at $\{p_3, p_4, p_5, p_6\}$, and is otherwise smooth.
As above, $C_0$ (and thus $C$) must be irreducible.

\item In case~\ref{75}, $C_0$ has exactly $5$ nodes, at $\{p_2, p_3, p_4, p_5, p_6\}$, and is otherwise smooth.
As above, $C_0$ must either be irreducible, or the union of a
line and a quintic. (Otherwise, it would have at least $8$ nodes.)
But in the second case, all $5$ nodes must be collinear,
contradicting our assumption that $\{p_2, p_3, p_4, p_5, p_6\}$ are general.
Consequently, $C_0$ (and thus $C$) must be irreducible.
\end{itemize}

We now turn to showing $H^1(\oo_C(1)) = 0$ in the first two cases.
In the first case, we note that $\Gamma$ contains $4 = \operatorname{genus}(C)$ general points $\{p_3, p_4, p_5, p_6\}$
on $C$; consequently, $E_3 + E_4 + E_5 + E_6$ --- and therefore
$\oo_C(1) = (3L - E_1 - E_2) - (E_3 + E_4 + E_5 + E_6)$ --- is a general line bundle of degree $7$,
which implies $H^1(\oo_C(1)) = 0$.
Similarly, in the second case,
we note that $\Gamma$ contains $5 = \operatorname{genus}(C)$
general points $\{p_2, p_3, p_4, p_5, p_6\}$ on $C$.
As in the first case, this implies $H^1(\oo_C(1)) = 0$, as desired.
\end{proof}

\begin{lm} \label{foo}
Let $C \subset \pp^3$ be a general BN-curve of degree and genus $(7, 4)$ or $(8, 5)$.
Then we have $H^1(N_C(-2)) = 0$.
\end{lm}
\begin{proof}
We take $C \subset S$, as constructed in Lemma~\ref{cubclass}, parts~\ref{74} and~\ref{85}
respectively.
These curves have degrees and genera $(7, 4)$ and $(8, 5)$ respectively, which can be seen by calculating the
intersection product with the hyperplane class and using adjunction.
For example, for the curve in part~\ref{74} of class
$5L - 2E_1 - 2E_2 - E_3 - E_4 - E_5 - E_6$, we calculate
\[\deg C = (5L - 2E_1 - 2E_2 - E_3 - E_4 - E_5 - E_6) \cdot (3L - E_1 - E_2 - E_3 - E_4 - E_5 - E_6) = 7,\]
and
\[\operatorname{genus} C = 1 + \frac{K_S \cdot C + C^2}{2} = 1 + \frac{-\deg C + C^2}{2} = 1 + \frac{-7 + 13}{2} = 4.\]
Because $N_S \simeq \oo_S(3)$, we have 
\[H^1(N_S|_C(-2)) = H^1(\oo_C(1)) = 0.\]
Moreover, $\oo_S(1)(-C)$ is either
$-2L + E_1 + E_2$ or $-2L + E_1$ respectively;
in either case we have $H^0(\oo_S(1)(-C)) = 0$. Consequently, the restriction map
\[H^0(\oo_S(1)) \to H^0(\oo_C(1))\]
is injective. Since
\[\dim H^0(\oo_S(1)) = 4 = \dim H^0(\oo_C(1)),\]
the above restriction map is therefore an isomorphism.
Applying
Lemma~\ref{del-pezzo}, it thus suffices to show that
\[\dim V_{[C], 2} = \dim H^0(\oo_C(1)) = 4.\]

To do this, we first observe that $[C]$ is always a linear combination $aH + bL_1 + cL_2$ of the
hyperplane class $H$, and two nonintersecting lines $L_1$ and $L_2$, such that both $b$ and $c$
are nonvanishing. Indeed:
\begin{align*}
5L - 2E_1 - 2E_2 - E_3 - E_4 - E_5 - E_6 &= 3(3L - E_1 - E_2 - E_3 - E_4 - E_5 - E_6) \\
&\quad - (2L - E_1 - E_3 - E_4 - E_5 - E_6) \\
&\quad - (2L - E_2 - E_3 - E_4 - E_5 - E_6) \\
5L - 2E_1 - E_2 - E_3 - E_4 - E_5 - E_6 &= 3(3L - E_1 - E_2 - E_3 - E_4 - E_5 - E_6) + E_1 \\
&\quad - 2(2L - E_2 - E_3 - E_4 - E_5 - E_6).
\end{align*}

Writing $F$ for a general quadric hypersurface, and $D = F \cap S$,
we observe that $\pic(D)$ is $4$-dimensional.
It is therefore sufficient to prove that for a general class $\theta \in \pic^{6a + 2b + 2c}(D)$,
there exists a smooth cubic surface $S$ containing $D$ and a pair $(L_1, L_2)$ of disjoint lines on $S$,
such that the restriction $(aH + bL_1 + cL_2)|_D = \theta$.
Since $H|_D = \oo_D(1)$ is independent of $S$ and the choice of $(L_1, L_2)$,
we may replace $\theta$ by $\theta(-a)$ and set $a = 0$.

We thus seek to show that for $b, c \neq 0$ and $\theta \in \pic^{2b + 2c}(D)$ general,
there exists a smooth cubic surface $S$ containing $D$, and a pair $(L_1, L_2)$ of disjoint lines on $S$,
with $(bL_1 + cL_2)|_D = \theta$.
Equivalently, we want to show the map
\[\{(S, E_1, E_2) : E_1, E_2 \subset S \supset D\} \mapsto \{(E_1, E_2)\},\]
from the space of smooth cubic surfaces $S$ containing $D$ with a choice
of pair of disjoint lines $(E_1, E_2)$, 
to the space of pairs of $2$-secant lines to $D$, is dominant.
For this, it suffices to check the vanishing of 
$H^1(N_S(-D -E_1 - E_2))$,
for any smooth cubic $S$ containing $D$ and disjoint lines $(E_1, E_2)$ on $S$,
in which lies the obstruction to smoothness of this map.
But $N_S(-D -E_1 - E_2) = 3L - 2E_1 - 2E_2 - E_3 - E_4 - E_5 - E_6$
has no higher cohomology by Kawamata-Viehweg vanishing.
\end{proof}

\begin{lm}
Let $C \subset \pp^3$ be a general BN-curve of degree $7$ and genus $5$.
Then we have $H^1(N_C(-1)) = 0$.
\end{lm}
\begin{proof}
We take $C \subset S$, as constructed in Lemma~\ref{cubclass}, part~\ref{75}.
Because $N_S \simeq \oo_S(3)$, we have 
\[H^1(N_S|_C(-1)) = H^1(\oo_C(2)) = 0.\]
Moreover, $\oo_S(2)(-C) \simeq \oo_S(-E_1)$ has no sections.
Consequently, the restriction map
\[H^0(\oo_S(2)) \to H^0(\oo_C(2))\]
is injective. Since
\[\dim H^0(\oo_S(2)) = 10 = \dim H^0(\oo_C(2)),\]
the above restriction map is therefore an isomorphism.
Applying
Lemma~\ref{del-pezzo}, it thus suffices to show that
\[\dim V_{[C], 1} = \dim H^0(\oo_C) = 1.\]

Writing $F$ for a general hyperplane, and $D = F \cap S$,
we observe that $\pic(D)$ is $1$-dimensional.
Since $[C] = 2H + E_1$,
it is therefore sufficient to prove that for a general class $\theta \in \pic^7(D)$,
there exists a cubic surface $S$ containing $D$ and a line $L$ on $S$,
such that the restriction $(2H + L)|_D = \theta$.
Since $H|_D = \oo_D(1)$ is independent of $S$ and the choice of $L$,
we may replace $\theta$ by $\theta(-1)$ and look instead for
$L|_D = \theta \in \pic^1(D)$.
Equivalently, we want to show the map
\[\{(S, E_1) : E_1 \subset S \supset D\} \mapsto \{(E_1, E_2)\},\]
from the space of smooth cubic surfaces $S$ containing $D$ with a choice
of line $E_1$,
to the space of $1$-secant lines to $D$, is dominant;
it suffices to check the vanishing of 
$H^1(N_S(-D-E_1))$,
for any smooth cubic $S$ containing $D$ and line $E_1$ on $S$,
in which lies the obstruction to smoothness of this map.
But $N_S(-D-E_1) = 6L - 3E_1 - 2E_2 - 2E_3 - 2E_4 - 2E_5 - 2E_6$
has no higher cohomology by Kodaira vanishing.
\end{proof}

Next, we consider the case where $S \subset \pp^4$ is the intersection
of two quadrics, which is isomorphic to the blowup $\bl_\Gamma \pp^2$
of $\pp^2$ along a set
\[\Gamma = \{p_1, \ldots, p_5\}\]
of five general points. Recall that this is a Del Pezzo surface,
which is to say that the embedding
$\bl_\Gamma \pp^2 \simeq S \hookrightarrow \pp^4$ as the intersection
of two quadrics is via the complete linear
system for the inverse of the canonical bundle:
\[-K_{\bl_\Gamma \pp^2} = 3L - E_1 - \cdots - E_5,\]
where $L$ is the class of a line in $\pp^2$ and $E_i$ is the exceptional divisor
in the blowup over $p_i$. Note that by construction,
\[N_S \simeq \oo_S(2) \oplus \oo_S(2).\]
In particular, $H^1(N_S(-1)) = 0$ by Kodaira vanishing.

\begin{lm} \label{qclass} Let $C \subset \bl_\Gamma \pp^2 \simeq S \subset \pp^4$ be a general curve of class either:
\begin{enumerate}
\item $5L - 2E_1 - E_2 - E_3 - E_4 - E_5$;
\item $6L - E_1 - 2E_2 - 2E_3 - 2E_4 - 2E_5$.
\end{enumerate}
Then $C$ is smooth and irreducible. In the first case, $H^1(\oo_C(1)) = 0$.
\end{lm}
\begin{proof}
We first show the above linear series
are basepoint-free.
To do this, we write them as a sum of terms which are evidently
basepoint-free:
\begin{align*}
5L - 2E_1 - E_2 - E_3 - E_4 - E_5 &= (3L - E_1 - E_2 - E_3 - E_4 - E_5) + (L - E_1) + L \\
6L - E_1 - 2E_2 - 2E_3 - 2E_4 - 2E_5 &= (3L - E_1 - E_2 - E_3 - E_4 - E_5) \\
&\qquad + (2L - E_2 - E_3 - E_4 - E_5) + L
\end{align*}
As in Lemma~\ref{cubclass}, we conclude that $C$ is smooth and
irreducible. In the first case, we have
$\deg \oo_C(1) = 9 > 8 = 2g - 2$, which implies
$H^1(\oo_C(1)) = 0$ as desired.
\end{proof}

\begin{lm}
Let $C \subset \pp^4$ be a general BN-curve of degree $9$ and genus $5$.
Then we have $H^1(N_C(-1)) = 0$.
\end{lm}
\begin{proof}
We take $C \subset S$, as constructed in Lemma~\ref{qclass}.
Because $N_S \simeq \oo_S(2) \oplus \oo_S(2)$, we have 
\[H^1(N_S|_C(-1)) = H^1(\oo_C(1) \oplus \oo_C(1)) = 0.\]
Moreover, $\oo_S(1)(-C) \simeq \oo_S(-2L + E_1)$ has no sections.
Consequently, the restriction map
\[H^0(\oo_S(1) \oplus \oo_S(1)) \to H^0(\oo_C(1) \oplus \oo_C(1))\]
is injective. Since
\[\dim H^0(\oo_S(1) \oplus \oo_S(1)) = 10 = \dim H^0(\oo_C(1) \oplus \oo_C(1)),\]
the above restriction map is therefore an isomorphism.
Applying
Lemma~\ref{del-pezzo}, it thus suffices to show that
\[\dim V_{[C], 1} = \dim H^0(\oo_C) = 1.\]

Writing $F$ for a general hyperplane, and $D = F \cap S$, we observe that $\pic(D)$ is $1$-dimensional.
Since $[C] = 3(3L - E_1 - E_2 - E_3 - E_4 - E_5) - 2(2L - E_1 - E_2 - E_3 - E_4 - E_5) - E_1$,
it is therefore sufficient to prove that for a general class $\theta \in \pic^9(D)$,
there exists a quartic Del Pezzo surface $S$ containing $D$, and a pair $\{L_1, L_2\}$ of
intersecting lines on $S$,
such that the restriction $(3H - 2L_1 - L_2)|_D = \theta$.
Since $H|_D = \oo_D(1)$ is independent of $S$ and the choice of $L$,
we may replace $\theta$ by $\theta^{-1}(3)$ and look instead for
$(2L_1 + L_2)|_D = \theta \in \pic^3(D)$.
For this, it suffices to show the map
\[\{(S, L_1, L_2) : L_1, L_2 \subset S \supset D\} \mapsto \{(L_1, L_2)\},\]
from the space of smooth quartic Del Pezzo surfaces $S$
containing $D$ with a choice
of pair of intersecting lines $(L_1, L_2)$, 
to the space of pairs of intersecting $1$-secant lines to $D$, is dominant.
Taking $[L_1] = E_1$ and $[L_2] = L - E_1 - E_2$,
it suffices to check the vanishing of the first cohomology of the vector bundle
$N_S(-D - E_1 - (L - E_1 - E_2))$ --- which is isomorphic to a direct
sum of two copies of the line bundle $2L - E_1 - E_3 - E_4 - E_5$ --- for
any smooth quartic Del Pezzo surface $S$ containing $D$,
in which lies the obstruction to smoothness of this map.
But $2L - E_1 - E_3 - E_4 - E_5$ has no higher cohomology by Kodaira vanishing.
\end{proof}

To prove the main theorems (excluding the ``conversely\ldots'' part),
it thus remains to produce a smooth curve $C \subset \pp^3$ of degree $5$
and genus $1$, with $H^1(N_C(-2)) = 0$.

\section{\boldmath Elliptic Curves of Degree $5$ in $\pp^3$ \label{sec:51}}

In this section, we construct an immersion $f \colon C \hookrightarrow \pp^3$
of degree~$5$ from a smooth elliptic curve,
with $H^1(N_f(-2)) = 0$.
As in the previous section,
we shall identify $C = f(C)$ with its image,
in which case the normal bundle $N_f$ becomes the normal bundle $N_C$ of the image.

Our basic method in this section will be to use the geometry of the cubic scroll $S \subset \pp^4$.
Recall that
the cubic scroll can be constructed
in two different ways:
\begin{enumerate}
\item Let $Q \subset \pp^4$ and $M \subset \pp^4$ be a plane conic,
and a line disjoint from the span of $Q$, respectively. As abstract varieties,
$Q \simeq \pp^1 \simeq M$.
Then $S$ is the ruled surface swept out by lines joining pairs of points
identified under some choice of above isomorphism.

\item Let $x \in \pp^2$ be a point, and consider the blowup $\bl_x \pp^2$
of $\pp^2$ at the point $\{x\}$. Then, $S$ is the image of $f \colon \bl_x \pp^2 \hookrightarrow \pp^4$
under the complete linear series attached to the line bundle
\[2L - E,\]
where $L$ is the class of a line in $\pp^2$, and $E$ is the exceptional divisor
in the blowup.
\end{enumerate}

To relate these two constructions, we fix a line $L \subset \pp^2$ not meeting $x$ in the second
construction, and consider the isomorphism $L \simeq \pp^1 \simeq E$
defined by sending $p \in L$ to the intersection with $E$ of the proper transform
of the line joining $p$ and $x$.
Then the $f(L)$ and $f(E)$ are $Q$ and $M$ respectively in the second construction;
the proper transforms of lines through $x$ are the lines of the ruling.

\medskip

Now take two points $p, q \in L$. Since $f(L)$ is a plane conic,
the tangent lines to $f(L)$ at $p$ and $q$ intersect; we let $y$
be their point of intersection.

From the first description of $S$, it is clear that any line through
$y$ intersects $S$ quasi-transversely --- except for the lines joining $y$ to $p$ and $q$,
each of which meets $S$ in a degree~$2$ subscheme of $f(L)$.
Write $\bar{S}$ for the image of $S$ under projection from $y$; by construction,
the projection $\pi \colon S \to \bar{S} \subseteq \pp^3$ is unramified away from $\{p, q\}$,
an immersion away from $f(L)$, and when restricted to $f(L)$ is a double cover of its image
with ramification exactly at $\{p, q\}$.
At $\{p, q\}$, the differential drops rank transversely,
with kernel the tangent
space to $f(L)$. (By ``drops rank transversely'', we mean that the section $d\pi$ of
$\hom(T_S, \pi^* T_{\pp^3})$ is transverse to the subvariety
of $\hom(T_S, \pi^* T_{\pp^3})$ of maps with less-than-maximal rank.)

If $C \subset \bl_{\{p\}} \pp^2 \simeq S$ is a curve passing through $p$ and $q$,
but transverse to $L$ at each of these points, then any line through $y$ intersects
$C$ quasi-transversely. In particular, if $C$ meets $L$ in at most one point outside of $\{p, q\}$,
the image $\bar{C}$ of $C$ under projection from $y$
is smooth. Moreover, the above analysis of $d\pi$ on $S$ implies that the natural map
\[N_{C/S} \to N_{\bar{C}/\pp^3}\]
induced by $\pi$ is fiberwise injective away from $\{p, q\}$, and has a simple
zero at both $p$ and $q$. That is, we have an exact sequence
\begin{equation} \label{51}
0 \to N_{C/S}(p + q) \to N_{\bar{C}/\pp^3} \to \mathcal{Q} \to 0,
\end{equation}
with $\mathcal{Q}$ a vector bundle.

\medskip

We now specialize to the case where $C$ is the proper transform of a plane cubic, passing through
$\{x, p, q\}$, and transverse to $L$ at $\{p, q\}$. By inspection,
$\bar{C}$ is an elliptic curve of degree $5$ in $\pp^3$; it thus suffices to show
$H^1(N_{\bar{C}/\pp^3}(-2)) = 0$.

\begin{lm} In this case, 
\begin{align*}
N_{C/S}(p + q) &\simeq \oo_C(3L - E + p + q) \\
\mathcal{Q} &\simeq \oo_C(5L - 3E - p - q).
\end{align*}
\end{lm}
\begin{proof} 
We first note that
\[N_{C/S} \simeq N_{C/\pp^2}(-E) \simeq \oo_C(3L)(-E) \quad \Rightarrow \quad N_{C/S}(p + q) \simeq \oo_C(3L - E + p + q).\]
Next, the Euler exact sequence
\[0 \to \oo_{\bar{C}} \to \oo_{\bar{C}}(1)^4 \to T_{\pp^3}|_{\bar{C}} \to 0\]
implies
\[\wedge^3 (T_{\pp^3}|_{\bar{C}}) \simeq \oo_C(4).\]
Combined with the normal bundle exact sequence
\[0 \to T_C \to T_{\pp^3}|_{\bar{C}} \to N_{\bar{C}/\pp^3} \to 0,\]
and the fact that $C$ is of genus $1$, so $T_C \simeq \oo_C$, we conclude that
\[\wedge^2(N_{\bar{C}/\pp^3}) \simeq \oo_C(4) \otimes T_C^\vee \simeq \oo_C(4) = \oo_C(4(2L - E)) = \oo_C(8L - 4E).\]
The exact sequence \eqref{51} then implies
\[\mathcal{Q} \simeq \wedge^2(N_{\bar{C}/\pp^3}) \otimes (N_{C/S}(p + q))^\vee \simeq \oo_C(8L - 4E)(-3L + E - p - q) = \oo_C(5L - 3E - p - q),\]
as desired.
\end{proof}

\noindent
Twisting by $\oo_C(-2) \simeq \oo_C(-4L + 2E)$, we obtain isomorphisms:
\begin{align*}
N_{C/S}(p + q) &\simeq \oo_C(-L + E + p + q) \\
\mathcal{Q} &\simeq \oo_C(L - E - p - q).
\end{align*}
We thus have an exact sequence
\[0 \to \oo_C(-L + E + p + q) \to N_{\bar{C}/\pp^3}(-2) \to \oo_C(L - E - p - q) \to 0.\]
Since $\oo_C(-L + E + p + q)$ and $\oo_C(L - E - p - q)$ are both general line bundles
of degree zero on a curve of genus $1$, we have
\[H^1(\oo_C(-L + E + p + q)) = H^1(\oo_C(L - E - p - q)) = 0,\]
which implies
\[H^1(N_{\bar{C}/\pp^3}(-2)) = 0.\]
This completes the proof the main theorems, except for the ``conversely\ldots'' parts.

\section{The Converses \label{sec:converses}}

In this section, we show that the intersections appearing in our main theorems
fail to be general in all listed exceptional cases.
We actually go further, describing precisely the intersection of a general BN-curve $f \colon C \to \pp^r$
in terms of the intrinsic geometry of $Q \simeq \pp^1 \times \pp^1$, $H \simeq \pp^2$,
and $H \simeq \pp^3$ respectively.

Since the general BN-curve $f \colon C \to \pp^r$ is an immersion, we can
identify $C = f(C)$ with its image as in the previous two sections, in which case
the normal bundle $N_f$ becomes the normal bundle $N_C$ of its image.

There are two basic phenomenon which occur explain the majority of our exceptional
cases: cases where $C$ is a complete intersection, and cases where $C$ lies
on a surface of low degree. The first two subsections will be devoted
to the exceptional cases that arise for these two reasons respectively.
In the final subsection, we will consider the two remaining exceptional
cases.

\subsection{Complete Intersections}

We begin by dealing with those exceptional cases which
are complete intersections.

\begin{prop}
Let $C \subset \pp^3$ be a general BN-curve of degree $4$ and genus $1$.
Then the intersection $C \cap Q$ is the intersection of two general curves
of bidegree $(2, 2)$ on $Q \simeq \pp^1 \times \pp^1$. In particular,
it is not a collection of $8$ general points.
\end{prop}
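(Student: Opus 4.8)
The plan is to recognize $C \cap Q$ as the base locus, on $Q$, of the pencil of $(2,2)$-curves cut out by the quadrics through $C$, and then to check that this pencil is general by a dimension count resting on the surjectivity of the quadric restriction map $H^0(\oo_{\pp^3}(2)) \to H^0(\oo_Q(2,2))$.

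First I would set up the geometry. A general BN-curve $C \subset \pp^3$ of degree~$4$ and genus~$1$ is a smooth irreducible elliptic quartic (the generic member of the relevant component is an embedding), and it is classical that $h^0(\mathcal{I}_{C/\pp^3}(2)) = 2$ — e.g.\ from $\chi(\mathcal{I}_C(2)) = 10 - 8 = 2$ using that $\oo_C(2)$ is nonspecial of degree $8$, together with projective normality. Thus $C$ lies on a pencil $\mathcal{P}_C := \pp H^0(\mathcal{I}_{C/\pp^3}(2))$ of quadrics, and being of degree $4 = 2\cdot 2$ it is the complete intersection of any two members; in particular $C = \operatorname{Bs}(\mathcal{P}_C)$. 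Since $Q$ is general, $Q \notin \mathcal{P}_C$ and $C \not\subset Q$, so $C \cap Q$ is a length-$8$ subscheme of $Q$. Fixing an isomorphism $Q \simeq \pp^1 \times \pp^1$ with $\oo_Q(1) = \oo_Q(1,1)$, each member $R \in \mathcal{P}_C$ restricts to a divisor $R \cap Q$ of class $\oo_Q(2,2)$, and restriction is injective on $\mathcal{P}_C$ (because $Q \notin \mathcal{P}_C$); so $\mathfrak{p} := \mathcal{P}_C|_Q$ is a pencil of $(2,2)$-curves on $Q$ with
\[\operatorname{Bs}(\mathfrak{p}) = \Big(\bigcap_{R \in \mathcal{P}_C} R\Big) \cap Q = C \cap Q.\]
Taking two general members $D_1, D_2$ of $\mathfrak{p}$ gives $D_1 \cap D_2 = \operatorname{Bs}(\mathfrak{p}) = C \cap Q$, so it remains only to show that $\mathfrak{p}$ (equivalently the pair $(D_1,D_2)$) is general.

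For this the key observation is that $\operatorname{res}_Q \colon H^0(\oo_{\pp^3}(2)) \to H^0(\oo_Q(2,2))$ has kernel $H^0(\mathcal{I}_{Q/\pp^3}(2)) = \langle Q\rangle$ of dimension~$1$, hence by dimensions ($10 = 9 + 1$) is surjective. Now fix $Q$ general; as $C$ ranges over general elliptic quartics, $\mathcal{P}_C$ ranges over general pencils of quadrics in $\pp^3$ (the classical birational correspondence $\mathcal{P} \mapsto \operatorname{Bs}\mathcal{P}$ between the $16$-dimensional families of pencils of quadrics and of elliptic quartics). Applying $\operatorname{res}_Q$ to pencils gives a dominant rational map $\mathbb{G}(2,10) = \mathbb{G}(2, H^0(\oo_{\pp^3}(2))) \dashrightarrow \mathbb{G}(2, H^0(\oo_Q(2,2))) = \mathbb{G}(2,9)$ — a projection of Grassmannians away from $\langle Q\rangle$ — and it is dominant by a dimension count: $\dim \mathbb{G}(2,10) = 16$, $\dim \mathbb{G}(2,9) = 14$, and the fibre over a general $2$-plane $\bar{P}$ consists of the $2$-planes of the $3$-space $\operatorname{res}_Q^{-1}(\bar{P})$ not containing $\langle Q\rangle$, an open subset of $\mathbb{G}(2,3)$ of dimension~$2$. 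Since the general pencils of quadrics form a dense subset of $\mathbb{G}(2,10)$, it follows that $\mathfrak{p} = \operatorname{res}_Q(\mathcal{P}_C)$ is a general pencil of $(2,2)$-curves on $Q$; hence $C \cap Q = \operatorname{Bs}(\mathfrak{p})$ is the intersection of two general curves of bidegree $(2,2)$, as claimed.

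Finally, for the last sentence: eight general points of $Q$ lie on a \emph{unique} $(2,2)$-curve (since $\oo_Q(2,2)$ is very ample with $h^0 = 9$, so eight general points impose independent conditions and $9 - 8 = 1$), hence cannot be the base locus of a pencil of $(2,2)$-curves; therefore $C \cap Q$ is not a general collection of $8$ points. (Alternatively: $C$ being a complete intersection of two quadrics, $N_C \simeq \oo_C(2)^{\oplus 2}$, so $H^1(N_C(-2)) = H^1(\oo_C)^{\oplus 2} \neq 0$, and by the deformation-theoretic criterion of the introduction the map $f \mapsto f(C) \cap Q$ fails to be dominant.) The main obstacle is the dimension count of the third paragraph — specifically, verifying that the "projection of Grassmannians" $\operatorname{res}_Q$ is dominant and that general elliptic quartics sweep out a dense set of pencils of quadrics; once these are in hand, the rest is formal.
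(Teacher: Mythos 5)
Your proposal is correct and follows essentially the same route as the paper: identify $C$ as the complete intersection (base locus of the pencil) of two general quadrics and restrict to $Q$, then use that $8$ general points lie on a unique $(2,2)$-curve. The paper simply asserts the restricted quadrics are general, whereas you justify this via surjectivity of $H^0(\oo_{\pp^3}(2)) \to H^0(\oo_Q(2,2))$ and the induced dominant projection of Grassmannians of pencils; this is a correct filling-in of the step the paper labels ``easy to see.''
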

\begin{proof}
It is easy to see that $C$ is the complete intersection of two general quadrics.
Restricting these quadrics to $Q \simeq \pp^1 \times \pp^1$,
we see that $C \cap Q$ is the intersection of two general curves
of bidegree $(2, 2)$.

Since general points impose independent conditions on the $9$-dimensional
space of curves of bidegree $(2, 2)$, a general collection of $8$ points
will lie only on one curve of bidegree $(2, 2)$.
The intersections of two general curves of bidegree $(2, 2)$
is therefore not a collection of $8$ general points.
\end{proof}

\begin{prop} \label{64-to-Q}
Let $C \subset \pp^3$ be a general BN-curve of degree $6$ and genus $4$.
Then the intersection $C \cap Q$ is the intersection of two general curves
of bidegrees $(2, 2)$ and $(3,3)$ respectively on $Q \simeq \pp^1 \times \pp^1$. In particular,
it is not a collection of $12$ general points.
\end{prop}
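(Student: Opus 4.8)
The plan is to exploit the classical identification of a genus-$4$ canonical curve with a complete intersection. Since $\rho(6,4,3)=0$ and $d=6=2g-2$, a general BN-curve $C\subset\pp^3$ of degree~$6$ and genus~$4$ is a canonically embedded smooth curve of genus~$4$, hence the complete intersection $C=Q'\cap S$ of a quadric surface $Q'$ and a cubic surface $S$; moreover, as $C$ ranges over general such curves (and $S$ over the cubics through $C$, of which there is a positive-dimensional family), the pair $(Q',S)$ ranges over a dense subset of $\{\text{quadrics}\}\times\{\text{cubics}\}$. Because $Q$ is general it is transverse to $C$, so $C\cap Q$ is a reduced set of $12$ points and, scheme-theoretically,
\[
C\cap Q=(Q'\cap S)\cap Q=(Q'|_Q)\cap(S|_Q).
\]
Under the identification $Q\simeq\pp^1\times\pp^1$ we have $\oo_{\pp^3}(n)|_Q\simeq\oo_Q(n,n)$, so $Q'|_Q$ is a curve of bidegree $(2,2)$ and $S|_Q$ a curve of bidegree $(3,3)$, and $C\cap Q$ is exactly their intersection.

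Next I would check that this pair of curves is general in $|\oo_Q(2,2)|\times|\oo_Q(3,3)|$. This follows from the surjectivity of the restriction maps $H^0(\pp^3,\oo(2))\to H^0(Q,\oo_Q(2,2))$ and $H^0(\pp^3,\oo(3))\to H^0(Q,\oo_Q(3,3))$: their kernels are the multiples of the quadratic form defining $Q$, respectively the products of that form with a linear form, of dimensions $1=10-9$ and $4=20-16$ as required. Hence $(Q',S)\mapsto(Q'|_Q,S|_Q)$ is dominant onto the product, so a general pair $(Q',S)$ — in particular one cutting out a general $C$ — yields a general pair of curves of bidegrees $(2,2)$ and $(3,3)$. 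This generality verification is the only step needing any care, and even it is routine.

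Finally, for the ``in particular'' clause I would observe that the intersection of two general curves of bidegrees $(2,2)$ and $(3,3)$ can never be a general collection of $12$ points on $Q$: general points of $\pp^1\times\pp^1$ impose independent conditions on the $9$-dimensional space $H^0(\oo_Q(2,2))$, so a general collection of $12>9$ points lies on no curve of bidegree $(2,2)$, whereas $C\cap Q$ lies on the bidegree-$(2,2)$ curve $Q'|_Q$. Therefore $C\cap Q$ is not a general collection of $12$ points, as claimed.
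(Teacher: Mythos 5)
Your proof is correct and follows essentially the same route as the paper: identify the general degree-$6$, genus-$4$ BN-curve as a canonical curve, hence the complete intersection of a quadric and a cubic, restrict both to $Q$, and rule out generality of the $12$ points because they lie on a bidegree-$(2,2)$ curve. Your verification that the restricted pair is general via surjectivity of the restriction maps $H^0(\oo_{\pp^3}(n)) \to H^0(\oo_Q(n,n))$ is a detail the paper leaves implicit, and it is carried out correctly.
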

\begin{proof}
It is easy to see that $C$ is the complete intersection of a
general quadric and cubic.
Restricting these to $Q \simeq \pp^1 \times \pp^1$,
we see that $C \cap Q$ is the intersection of two general curves
of bidegrees $(2, 2)$ and $(3,3)$ respectively.

Since general points impose independent conditions on the $9$-dimensional
space of curves of bidegree $(2, 2)$, a general collection of $12$ points
will not lie any curve of bidegree $(2,2)$, and in particular will not be
such an intersection.
\end{proof}

\begin{prop}
Let $C \subset \pp^3$ be a general BN-curve of degree $6$ and genus $4$.
Then the intersection $C \cap H$ is a general collection of $6$ points
lying on a conic. In particular,
it is not a collection of $6$ general points.
\end{prop}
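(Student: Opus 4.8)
The plan is to exploit, exactly as in the proof of Proposition~\ref{64-to-Q}, the fact that a general BN-curve $C \subset \pp^3$ of degree $6$ and genus $4$ is the complete intersection of a general quadric $Q_0$ and a general cubic $S_0$; in particular $C$ is a canonical curve lying on the unique quadric $Q_0 \supset C$. For a general plane $H$, restricting these equations to $H$ gives
\[C \cap H = (Q_0 \cap H) \cap (S_0 \cap H),\]
which in $H \simeq \pp^2$ is the intersection of the conic $D := Q_0 \cap H$ with the plane cubic $S_0 \cap H$. So $C \cap H$ automatically consists of $6$ points lying on a conic, and the real content is (i) that these $6$ points are \emph{general} on that conic, and (ii) that $6$ general points of $\pp^2$ do not lie on a conic. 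Part (ii) is immediate: conics in $\pp^2$ form a $\pp^5$, so $6$ general points impose $6$ independent conditions and lie on no conic; hence, granting (i), the intersection is not a general collection of $6$ points of $H$.

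For (i) I would work on $Q_0 \simeq \pp^1 \times \pp^1$, where $C$ has class $(3,3)$ and $E := Q_0 \cap H$ is a smooth curve of class $(1,1)$ (a hyperplane section of $Q_0$), so that $E \simeq \pp^1$ and $C \cap H = C \cap E$, and argue in two steps. First, the assignment $C \mapsto Q_0$ dominates the space $\pp^9$ of quadrics: the space of BN-curves of degree $6$ and genus $4$ is irreducible of dimension $(r+1)d - (r-3)(g-1) = 24$, the space of pairs $(Q_0, C)$ with $Q_0$ a smooth quadric and $C \in |\oo_{Q_0}(3,3)|$ smooth is irreducible of dimension $9 + 15 = 24$, and the forgetful map from the latter to the former is dominant (its image contains every smooth curve of degree $6$ and genus $4$), hence birational; so $C \mapsto Q_0$ is defined on a dense open, and then $Q_0 \mapsto Q_0 \cap H$ is dominant onto conics in $H$ (restriction of quadratic forms from $\pp^3$ to $H$ is surjective). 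Second, fix $Q_0$ general; then as $C$ ranges over $|\oo_{Q_0}(3,3)|$ the divisor $C \cap E$ ranges over all of $|\oo_E(6)| = \operatorname{Sym}^6 E \simeq \pp^6$, because the restriction map
\[H^0(\oo_{Q_0}(3,3)) \to H^0(\oo_{Q_0}(3,3)|_E) \simeq H^0(\oo_{\pp^1}(6))\]
is surjective: its kernel is $H^0(\oo_{Q_0}(2,2))$, and $16 - 9 = 7 = h^0(\oo_{\pp^1}(6))$ (equivalently $H^1(\oo_{Q_0}(2,2)) = 0$ by Künneth). Combining the two steps, the pair $(D,\, C \cap H)$ sweeps out a dense subset of the universal $\operatorname{Sym}^6$ over the space of conics in $H$, which is precisely the assertion that $C \cap H$ is a general collection of $6$ points on a conic.

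The only mildly delicate point is the bookkeeping in the first step — identifying the $24$-dimensional space of BN-curves of degree $6$ and genus $4$ with the space of smooth $(3,3)$-curves on smoothly varying quadrics, and confirming the relevant restriction maps are surjective — but each ingredient is a routine dimension count or a Künneth/vanishing computation, so I do not expect a genuine obstacle here.
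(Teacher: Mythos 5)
Your proof is correct, and its overall skeleton matches the paper's: both reduce to the fact that a general BN-curve of degree $6$ and genus $4$ is a general complete intersection of a quadric and a cubic (so $C\cap H$ is cut on the conic $Q_0\cap H$ by a cubic), and both finish by noting that $6$ general points of $\pp^2$ impose independent conditions on the $6$-dimensional space of conics. Where you diverge is in the middle step, proving that the $6$ points are \emph{general} on the conic: the paper simply quotes Theorem~\ref{main-2} with $(d,g)=(3,1)$ (itself a consequence of Proposition~\ref{p2}, i.e.\ of $H^1(N_f(-2))=0$ for plane curves) to say that any $6$ points on a conic arise as a conic--cubic complete intersection; you instead work on $Q_0\simeq\pp^1\times\pp^1$ and check directly that the restriction map $H^0(\oo_{Q_0}(3,3))\to H^0(\oo_E(6))$ to the hyperplane-section conic $E$ is surjective (kernel $H^0(\oo_{Q_0}(2,2))$ of dimension $9$, with $16-9=7$), together with a dominance argument for $C\mapsto Q_0\mapsto Q_0\cap H$. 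Your route is self-contained and purely linear-algebraic, at the cost of the extra bookkeeping identifying the $24$-dimensional space of BN-curves with pairs $(Q_0,C)$; the paper's route is shorter because it leans on a theorem already established. One cosmetic remark: your parenthetical ``hence birational'' needs generic injectivity of $(Q_0,C)\mapsto C$ (which does hold, since $h^0(I_C(2))=1$), but you only actually use dominance of the projection to the $\pp^9$ of quadrics, so nothing is at stake.
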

\begin{proof}
As in Proposition~\ref{64-to-Q},
we see that $C \cap H$ is the intersection of general
conic and cubic curves.

In particular, $C \cap H$ lies on a conic. Conversely, any $6$ points
lying on a conic are the complete intersection of a conic and a cubic by Theorem~\ref{main-2}
(with $(d, g) = (3, 1)$).

Since general points impose independent conditions on the $6$-dimensional
space of plane conics,
a general collection of $6$ points
will not lie on a conic. We thus see our intersection
is not a collection of $6$ general points.
\end{proof}

\begin{prop}
Let $C \subset \pp^4$ be a general BN-curve of degree $8$ and genus $5$.
Then the intersection $C \cap H$ is the intersection of three general quadrics
in $H \simeq \pp^3$. In particular,
it is not a collection of $8$ general points.
\end{prop}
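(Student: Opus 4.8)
The plan is to follow the template of the preceding complete-intersection propositions. First I would note that since $\rho(8, 5, 4) = 0$ and $(d, r) = (2g - 2, g - 1)$, a general BN-curve $C \subset \pp^4$ of degree $8$ and genus $5$ is a canonically embedded smooth curve of genus $5$ (the unique complete $g^4_8$ on a general genus-$5$ curve being necessarily $|K_C|$, as a degree $2g-2$ line bundle with $h^0 = g$ must be canonical). By the classical structure theory of canonical curves (Petri, Enriques--Babbage) the general such curve is the transverse complete intersection of three quadrics $Q_1, Q_2, Q_3 \subset \pp^4$. Since $h^0(\oo_{\pp^4}(2)) - h^0(\oo_C(2)) = 15 - 12 = 3$, the net $V = \langle Q_1, Q_2, Q_3 \rangle$ of quadrics containing $C$ is uniquely determined by $C$, and as $C$ ranges over general BN-curves, $V$ ranges over a general net of quadrics in $\pp^4$.

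Restricting to a general hyperplane $H \simeq \pp^3$ then gives
\[
C \cap H = (Q_1 \cap H) \cap (Q_2 \cap H) \cap (Q_3 \cap H),
\]
a transverse complete intersection of three quadric surfaces in $H$; and since $V$ is a general net, the restricted net $V|_H$ is a general net of quadrics in $\pp^3$, i.e.\ the $Q_i|_H$ are three general quadrics. This establishes the first assertion.

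For the converse it is enough to observe that the complete intersection of three general quadrics in $\pp^3$ is not a general collection of $8$ points: as $h^0(\oo_{\pp^3}(2)) = 10$, a general set of $8$ points imposes independent conditions on quadrics and hence lies on only a pencil (a $2$-dimensional vector space) of them, whereas the $8$ points cut out by three linearly independent quadrics lie on a net. Therefore $C \cap H$, which lies on the net $V|_H$, is not a general collection of $8$ points on $H$. The only nontrivial ingredient is the classical fact that the general canonical genus-$5$ curve is a complete intersection of three quadrics; the rest is dimension counting. The point that warrants a little care is confirming that the generic member of the BN component really is this Petri-general canonical curve rather than a trigonal curve or one carrying a vanishing theta-null — but this is immediate since each of those exceptional loci is a proper closed subvariety of $\bar{M}_5$ while the relevant component of $\bar{M}_g(\pp^4, 8)$ dominates $\bar{M}_5$.
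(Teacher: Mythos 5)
Your proposal is correct and follows essentially the same route as the paper: identify $C$ as a complete intersection of three quadrics (the paper simply asserts this, where you supply the canonical-curve/Petri justification), restrict to $H$, and conclude non-generality from the count $h^0(\oo_{\pp^3}(2)) = 10$ versus $8$ independent conditions. The extra care you take about Petri-generality and the generality of the restricted net is sound but not needed beyond what the paper records.
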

\begin{proof}
It is easy to see that $C$ is the complete intersection of three general quadrics.
Restricting these quadrics to $H \simeq \pp^3$,
we see that $C \cap H$ is the intersection of three general quadrics.

Since general points impose independent conditions on the $10$-dimensional
space of quadrics, a general collection of $8$ points
will lie only on only two quadrics.
The intersection of three general quadrics
is therefore not a collection of $8$ general points.
\end{proof}

\subsection{Curves on Surfaces}

Next, we analyze those cases
which are exceptional because $C$ lies on a surface $S$
of small degree. To show the intersection is general subject to
the constraint imposed by $C \subset S$, it will be useful to have the following lemma:

\begin{lm} \label{pic-res-enough}
Let $D$ be an irreducible curve of genus $g$ on a surface $S$, and $p_1, p_2, \ldots, p_n$
be a collection of $n$ distinct points on $D$. Suppose that $n \geq g$, and that
$p_1, p_2, \ldots, p_g$ are general.
Let $\theta \in \pic(S)$, with $\theta|_D \sim p_1 + p_2 + \cdots + p_n$. Suppose that
\[\dim H^0(\theta) - \dim H^0(\theta(-D)) \geq n - g + 1.\]
Then some curve $C \subset S$ of class $\theta$ meets $D$ transversely at $p_1, p_2, \ldots, p_n$.
\end{lm}
\begin{proof}
Since $p_1, p_2, \ldots, p_g$ are general, and $\theta|_D = p_1 + p_2 + \cdots + p_n$,
it suffices to show there is a curve of class $\theta$ meeting $D$ dimensionally-transversely
and passing through $p_{g + 1}, p_{g + 2}, \ldots, p_n$; the remaining $g$ points
of intersection are then forced to be $p_1, p_2, \ldots, p_g$.

For this, we note there is a $\dim H^0(\theta) - (n - g) > \dim H^0(\theta(-D))$ dimensional
space of sections of $\theta$ which vanish at $p_{g + 1}, \ldots, p_n$.
In particular, there is some section which does not vanish along $D$.
Its zero locus then gives the required curve $C$. (The curve $C$ meets $D$ dimensionally-transversely,
because $C$ does not contain $D$ and $D$ is irreducible.)
\end{proof}

\begin{prop}
Let $C \subset \pp^3$ be a general BN-curve of degree $5$ and genus $2$.
Then the intersection $C \cap Q$ is a collection of $10$ general points
lying on a curve of bidegree $(2, 2)$ on $Q \simeq \pp^1 \times \pp^1$. In particular,
it is not a collection of $10$ general points.
\end{prop}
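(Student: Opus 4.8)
The plan is to show two things --- that $C \cap Q$ lies on a curve of bidegree $(2,2)$, and that it is otherwise as general as possible --- the combination of which shows it is not a general collection of $10$ points. First I would record the standard geometry of $C$: since $\oo_C(2)$ is nonspecial of degree $10$, we have $\dim H^0(\oo_C(2)) = 9 < 10 = \dim H^0(\oo_{\pp^3}(2))$, so $C$ lies on a quadric $S$; this $S$ is unique (a curve on two distinct quadrics has degree at most $4$) and is smooth for general $C$. On $S \simeq \pp^1 \times \pp^1$ a curve of degree $5$ and genus $2$ has class $\oo_S(2,3)$ up to switching the factors (from $a+b=5$ and $(a-1)(b-1)=2$); conversely, by irreducibility of the Brill--Noether locus together with the dimension count $\dim |\oo_S(2,3)| + \dim\{\text{smooth quadrics}\} = 11 + 9 = 20 = \dim \bar{M}_2(\pp^3,5)^\circ$, the general BN-curve of degree $5$ and genus $2$ is a general member of $|\oo_S(2,3)|$ on a general smooth quadric $S$.

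Next, restricting the equation of $Q$ to $S$ exhibits $D := Q \cap S$ as a member of $|\oo_S(2,2)|$, hence an elliptic quartic which, being cut out on $Q$ by a quadric, is a curve of bidegree $(2,2)$ on $Q \simeq \pp^1 \times \pp^1$. Since $C \subset S$ and $Q \cap S = D$, we get $C \cap Q = C \cap D$, a divisor of degree $(2,3)\cdot(2,2) = 10$ on $D$; thus $C \cap Q$ does lie on a $(2,2)$-curve. Because $|\oo_Q(2,2)| \simeq \pp^8$, curves of bidegree $(2,2)$ sweep out only an $18$-dimensional family of $10$-tuples inside the $20$-dimensional $\operatorname{Sym}^{10} Q$ --- indeed $9$ general points of $Q$ already lie on no common $(2,2)$-curve --- so $C \cap Q$ cannot be a general collection of $10$ points once the next step is established.

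It then remains to prove the positive genericity statement: that the rational map $(C, Q) \mapsto C \cap Q$ dominates the universal family $\Sigma$ of $10$-point divisors on $(2,2)$-curves in a fixed general $Q$. I would argue in reverse. Let $D_0 \subset Q$ be a general $(2,2)$-curve and $\Delta = p_1 + \cdots + p_{10}$ a general effective divisor on $D_0$. The quadrics containing $D_0$ form a pencil (an elliptic quartic is projectively normal, with $\dim H^0(\mathcal I_{D_0}(2)) = 2$), and each member $S$ of this pencil cuts out $D_0$ on $Q$; using the identification $\oo_S(2,3)|_{D_0} \simeq \oo_{\pp^3}(3)|_{D_0} \otimes (R_S)^{-1}$, where $R_S \in \pic^2(D_0)$ is the class cut on $D_0$ by a ruling of $S$, one sees that $\oo_S(2,3)|_{D_0}$ moves in $\pic^{10}(D_0)$ as $S$ varies over the pencil. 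As both the pencil and $\pic^{10}(D_0)$ are irreducible curves, the resulting map is dominant, so we may choose a smooth quadric $S \supset D_0$ with $\oo_S(2,3)|_{D_0} \sim \Delta$. Then Lemma~\ref{pic-res-enough}, applied to the surface $S$, the curve $D_0$ (genus $1 \le 10 = n$), and the class $\theta = \oo_S(2,3)$ --- note $\dim H^0(\theta) - \dim H^0(\theta(-D_0)) = 12 - 2 = 10 = n - g + 1$ --- produces a curve $C \subset S$ of class $\oo_S(2,3)$ meeting $D_0$ transversely exactly in $\Delta$. A general such $C$ is smooth and irreducible (the subpencil of $|\oo_S(2,3)|$ through the $10$ general points has base locus exactly $\Delta$), hence lies in the Brill--Noether locus identified above and satisfies $C \cap Q = C \cap D_0 = \Delta$. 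This shows the image of the map is dense in $\Sigma$; since the source (BN-curves, times quadrics) is irreducible, the general BN-curve produces a general point of $\Sigma$, which together with the second paragraph is the assertion.

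The main obstacle is the step just described: showing that as $S$ ranges over the pencil of quadrics through a general elliptic quartic $D_0 \subset Q$, the line bundle $\oo_S(2,3)|_{D_0}$ genuinely fills out all of $\pic^{10}(D_0)$ --- equivalently, that $R_S|_{D_0} \in \pic^2(D_0)$ is non-constant --- since if this class were constant the image would be only $17$-dimensional and the description would fail to be tight. Once this is in hand, along with the (boundary-case) numerical hypothesis of Lemma~\ref{pic-res-enough} and the verification that the curve it produces can be chosen inside the dense Brill--Noether component (immediate from openness plus the positive dimension of the family of such $C$), the remainder is routine dimension-counting.
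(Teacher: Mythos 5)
Your overall strategy is the same as the paper's: the forward direction via $\dim H^0(\oo_C(2)) = 9 < 10$, and the converse by realizing a general degree-$10$ divisor $\Delta$ on a general $(2,2)$-curve $D \subset Q$ as $C \cap D$ for a curve $C$ of class $2H + (\text{ruling})$ on an auxiliary quadric surface $S \supset D$, with Lemma~\ref{pic-res-enough} (same numerology, $12 - 2 = 10 = n - g + 1$) producing $C$, and a dimension count finishing. However, the step you yourself flag as ``the main obstacle'' --- that $\oo_S(2,3)|_{D}$ sweeps out all of $\pic^{10}(D)$ as $S$ varies, equivalently that the ruling class $R_S|_{D}$ sweeps out all of $\pic^2(D)$ --- is precisely the content that needs proof, and the justification you sketch for it cannot work as stated. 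The pencil of quadrics through $D$ is a $\pp^1$, while $\pic^{10}(D)$ is a torsor over the elliptic curve $D$; every morphism from $\pp^1$ to an elliptic curve is constant, so ``both are irreducible curves, hence the map is dominant'' fails --- a map from the pencil itself is forced to be constant. The correct parameter space is the double cover of the pencil that parametrizes rulings (branched over the four cones in the pencil), and even on that cover non-constancy of the induced map to $\pic^2(D)$ still requires an argument.

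The paper supplies exactly this missing argument: it writes $\Delta - 2H \sim x + y$ on $D$ (effective since $D$ has genus $1$) and shows that for the \emph{general} $2$-secant line $\overline{xy}$ to $D$ there is a smooth quadric $S$ containing both $D$ and $\overline{xy}$; a line on $S$ is a ruling line, so this realizes $x + y$ as $R_S|_D$ for a suitable choice of ruling. Dominance onto the space of $2$-secant lines is proved by a deformation-theoretic computation: the obstruction lies in $H^1(N_S(-D - L)) = H^1(\oo_S(0,-1)) = 0$. You need some such incidence-variety argument (or an explicit identification of the curve of rulings of quadrics through $D$ with $\pic^2(D)$) to close the gap; without it your construction only reaches divisors in a single translate of $|2H|$ inside $\pic^{10}(D)$, a $17$-dimensional family of $10$-tuples, and the claim that $C \cap Q$ is a \emph{general} collection of $10$ points on a $(2,2)$-curve is not established. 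The remainder of your write-up (the class computation on $S$, the application of Lemma~\ref{pic-res-enough}, smoothness and irreducibility of the resulting $C$, and the final dimension count) is correct and matches the paper.
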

\begin{proof}
Since $\dim H^0(\oo_C(2)) = 9$ and $\dim H^0(\oo_{\pp^3}(2)) = 10$,
we conclude that $C$ lies on a quadric.
Restricting to $Q$, we see that $C \cap Q$ lies on a curve
of bidegree $(2,2)$.

Conversely, given $10$ points $p_1, p_2, \ldots, p_{10}$ lying on a curve $D$ of bidegree $(2, 2)$,
we may first find a pair of points $\{x, y\} \subset D$ so that
$x + y + 2H \sim p_1 + \cdots + p_{10}$. We then claim there is a smooth quadric containing
$D$ and the general $2$-secant line $\overline{xy}$ to $D$.
Equivalently, we want to show the map
\[\{(S, L) : L \subset S \supset D\} \mapsto \{L\},\]
from the space of smooth quadric surfaces $S$ containing $D$ with a choice
of line $L$,
to the space of $2$-secant lines to $D$, is dominant;
it suffices to check the vanishing of 
$H^1(N_S(-D-L))$,
for any smooth quadric $S$ containing $D$ and line $L$ on $S$,
in which lies the obstruction to smoothness of this map.
But $N_S(-D-L) = \oo_S(0, -1)$
has no higher cohomology by Kodaira vanishing.

Writing $L \in \pic(S)$ for the class of the line $\overline{xy}$,
we see that $(L + 2H)|_D \sim p_1 + \cdots + p_{10}$ as divisor classes.
Applying Lemma~\ref{pic-res-enough}, and noting that
$\dim H^0(\oo_{S}(2H + L)) = 12$ while 
$\dim H^0(\oo_{S}(L)) = 2$, there is a curve $C$ of class
$2H + L$ meeting $D$ transversely at $p_1, \ldots, p_{10}$.
Since $\oo_{S}(2H + L)$ is very ample by inspection, $C$
is smooth (for $p_1, \ldots, p_{10}$ general). By results of \cite{keem},
this implies $C$ is a BN-curve.

Since general points impose independent conditions on the $9$-dimensional
space of curves of bidegree $(2, 2)$, a general collection of $10$ points
does not lie on a curve of bidegree $(2, 2)$.
A collection of $10$ general points on a general curve of bidegree $(2,2)$
is therefore not a collection of $10$ general points.
\end{proof}

\begin{prop}
Let $C \subset \pp^3$ be a general BN-curve of degree $7$ and genus $5$.
Then the intersection $C \cap Q$ is a collection of $14$
points lying on a curve $D \subset Q \simeq \pp^1 \times \pp^1$,
which is general subject to the following conditions:
\begin{enumerate}
\item The curve $D$ is of bidegree $(3, 3)$.
\item The divisor $C \cap Q - 2H$ on $D$ (where $H$ is the hyperplane class)
is effective.
\end{enumerate}
In particular, it is not a collection of $14$ general points.
\end{prop}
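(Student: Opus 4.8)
The plan is to prove the two containments --- that $C \cap Q$ satisfies constraints (1) and (2), and conversely that every configuration of $14$ points on a bidegree-$(3,3)$ curve satisfying (2) occurs as such an intersection --- and then a short dimension count for the final ``in particular''.

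For the forward direction: by the results of Section~\ref{sec:in-surfaces} (Lemma~\ref{cubclass}, case~\ref{75}, together with the irreducibility of the relevant Hilbert scheme from~\cite{keem}), the general BN-curve $C$ of degree~$7$ and genus~$5$ lies on a general smooth cubic surface $S = \bl_\Gamma \pp^2 \subset \pp^3$ with class $[C] = 6L - E_1 - 2E_2 - \cdots - 2E_6$. Setting $D := S \cap Q$, which for $Q$ general is a smooth curve of bidegree $(3, 3)$ and genus~$4$, we have $C \cap Q = C \cap D$, and in $\pic(D)$
\[ [C \cap Q] - 2H|_D = \bigl(\oo_S(C) - \oo_S(D)\bigr)\big|_D = \oo_S(C - D)\big|_D = \oo_S(E_1)\big|_D, \]
which is effective (of degree $E_1 \cdot D = 2$). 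This gives constraints (1) and (2).

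For the converse: let $D \subset Q$ be a general curve of bidegree $(3, 3)$ and $p_1, \ldots, p_{14}$ points on $D$, general subject to $p_1 + \cdots + p_{14} - 2H|_D$ being effective, say linearly equivalent to $x_1 + x_2$ with $x_1 \neq x_2$. Since $D = Q \cap S'$ for a cubic $S'$, and $\ell := \overline{x_1 x_2}$ is a general $2$-secant line to $D$, I would first show there is a smooth cubic surface $S$ containing both $D$ and $\ell$: arguing as in the nearby propositions, the obstruction to dominance of the map from the space of pairs (smooth cubic $S \supset D$, line $\subset S$) to the space of $2$-secant lines of $D$ lies in $H^1(N_S(-D - \ell)) \simeq H^1(\oo_S(H - E_1))$, which vanishes by Kodaira vanishing. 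Relabelling so that $\ell = E_1$, the class $\theta := E_1 + 2H = 6L - E_1 - 2E_2 - \cdots - 2E_6$ restricts to $\theta|_D = (x_1 + x_2) + 2H|_D \sim p_1 + \cdots + p_{14}$, while $\theta(-D) = E_1$ has $h^0 = 1$ and $h^0(\theta) = \chi(\oo_S(\theta)) = 12$; so $\dim H^0(\theta) - \dim H^0(\theta(-D)) = 11 = 14 - 4 + 1$, and Lemma~\ref{pic-res-enough} produces a curve $C \subset S$ of class $\theta$ meeting $D$ transversely at $p_1, \ldots, p_{14}$. By Lemma~\ref{cubclass} (case~\ref{75}) this $C$ is smooth and irreducible, hence a BN-curve of degree~$7$ and genus~$5$ by~\cite{keem}, and $C \cap Q = C \cap D = \{p_1, \ldots, p_{14}\}$. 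This shows $C \cap Q$ is general subject to (1) and (2). Finally, $14$ general points of $Q$ lie on a pencil of $(3,3)$-curves, but as $D$ ranges over this $\pp^1$ the class $\sum p_i - 2H|_D$ sweeps out only a $1$-dimensional family among the groups $\pic^2(D)$ of the members, which generically avoids the (relative dimension~$2$) locus of effective classes; so no $(3,3)$-curve through $14$ general points satisfies (2), and $C \cap Q$ is not a general collection of $14$ points.

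The main obstacle is the converse direction --- producing a smooth cubic surface through $D$ and the chosen secant line, and then matching the hypotheses of Lemma~\ref{pic-res-enough}, where the count $\dim H^0(\theta) - \dim H^0(\theta(-D)) = 11$ only just meets the required bound; the forward direction is essentially immediate once $C$ is known to lie on a cubic surface with the prescribed class.
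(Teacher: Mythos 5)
Your forward and converse directions follow the paper's proof essentially verbatim: the same cubic surface $S=\bl_\Gamma\pp^2$ with $[C]=2H+E_1=6L-E_1-2E_2-\cdots-2E_6$, the same obstruction computation $H^1(N_S(-D-\ell))=H^1(\oo_S(H-E_1))=0$ to produce a smooth cubic through $D$ and the secant line, and the same application of Lemma~\ref{pic-res-enough} with $h^0(\theta)=12$, $h^0(\theta(-D))=h^0(\oo_S(E_1))=1$. One small omission in the forward direction: to conclude that the \emph{general} BN-curve of degree~$7$ and genus~$5$ lies on such a cubic surface (rather than merely that some BN-curve does), you need the curves constructed inside cubic surfaces to dominate the unique Hilbert scheme component, which the paper gets by checking $H^1(N_S(-C))=H^1(\oo_S(3L-2E_1-E_2-\cdots-E_6))=0$; you should include this lifting-of-deformations step.

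The final ``in particular'' is where you genuinely diverge, and where there is a gap. You fix $14$ general points, observe they lie on a pencil of $(3,3)$-curves, and assert that the resulting one-parameter family of classes $\sum p_i-2H|_D$ in the relative $\pic^2$ ``generically avoids'' the codimension-two effective locus. That avoidance is exactly the statement to be proved: the family in question is not a generic arc in the relative Picard variety but is cut out by the $14$ chosen points, so expected-codimension reasoning does not apply to it without further justification. The paper instead bounds the total space of configurations satisfying (1) and (2): $15$ parameters for $D$, plus $2$ for the effective divisor $\Delta\in\sym^2 D$, plus $\dim|2H+\Delta|=10$, giving at most $27<28=\dim\sym^{14}(Q)$, so the image in $\sym^{14}(Q)$ cannot be dense. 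Your fiberwise picture can be repaired by exactly this incidence-variety count, but as written the key inequality is asserted rather than established.
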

\begin{proof}
First we claim the general such curve $C$ lies on a smooth cubic surface $S$ with class
$2H + E_1 = 6L - E_1 - 2E_2 - 2E_3 - 2E_4 - 2E_5 - 2E_6$.
Indeed, by Lemma~\ref{cubclass} part~\ref{75}, a general curve of this class is smooth and irreducible;
such a curve has degree~$7$ and genus~$5$, and in particular is a BN-curve by results of \cite{keem}.
It remains to see there are no obstructions to lifting a deformation
of $C$ to a deformation of the pair $(S, C)$,
i.e.\ that $H^1(N_S(-C)) = 0$. But $N_S(-C) = 3L - 2E_1 - E_2 - E_3 - E_4 - E_5 - E_6$,
which has no higher cohomology by Kodaira vanishing.

Thus, $C \cap Q - 2H$ is the restriction to $D$
of the class of a line on $S$; in particular, $C \cap Q - 2H$
is an effective divisor on $D$.

Conversely, suppose that $p_1, p_2, \ldots, p_{14}$
are a general collection of $14$ points lying on a curve $D$ of bidegree $(3,3)$
with $p_1 + \cdots + p_{14} - 2H \sim x + y$ effective.
We then claim there is a smooth cubic containing
$D$ and the general $2$-secant line $\overline{xy}$ to $D$.
Equivalently, we want to show the map
\[\{(S, L) : L \subset S \supset D\} \mapsto \{L\},\]
from the space of smooth cubic surfaces $S$ containing $D$ with a choice
of line $L$,
to the space of $2$-secant lines to $D$, is dominant;
for this it suffices to check the vanishing of 
$H^1(N_S(-D-L))$.
But $N_S(-D-L) = 3L - 2E_1 - E_2 - E_3 - E_4 - E_5 - E_6$,
which has no higher cohomology by Kodaira vanishing.

Choosing an isomorphism $S \simeq \bl_\Gamma \pp^2$ where $\Gamma = \{q_1, q_2, \ldots, q_6\}$,
so that the line $\overline{xy} = E_1$ is the exceptional
divisor over $q_1$,
we now look for a curve $C \subset S$ of class
\[[C] = 6L - E_1 - 2E_2 - 2E_3 - 2E_4 - 2E_5 - 2E_6.\]

Again by Lemma~\ref{cubclass}, the general such curve is smooth and irreducible;
such a curve has degree~$7$ and genus~$5$, and in particular is a BN-curve by results of \cite{keem}.
Note that
\[\dim H^0(\oo_S(6L - E_1 - 2E_2 - 2E_3 - 2E_4 - 2E_5 - 2E_6)) = 12 \quad \text{and} \quad \dim H^0(\oo_S(E_1)) = 1.\]
Applying Lemma~\ref{pic-res-enough},
we conclude that some curve of our given class meets $D$ transversely
at $p_1, p_2, \ldots, p_{14}$, as desired.

It remains to see from this description that
$C \cap Q$ is not a general collection of $14$ points.
For this, first note that there is a $15$-dimensional space
of such curves $D$ (as $\dim H^0(\oo_Q(3,3)) = 16$).
On each each curve, there is a $2$-dimensional family of effective
divisors $\Delta$; and for fixed $\Delta$, a $10$-dimensional family of divisors
linearly equivalent to $2H + \Delta$ (because $\dim H^0(\oo_D(2H + \Delta)) = 11$
by Riemann-Roch). Putting this together,
there is an (at most) $15 + 2 + 10 = 27$-dimensional family of such collections
of points.
But $\sym^{14}(Q)$ has dimension $28$. In particular, collections of such 
points cannot be general.
\end{proof}

\begin{prop}
Let $C \subset \pp^3$ be a general BN-curve of degree $8$ and genus $6$.
Then the intersection $C \cap Q$ is a general collection of $16$ points
on a curve of bidegree $(3,3)$ on $Q \simeq \pp^1 \times \pp^1$. In particular,
it is not a collection of $16$ general points.
\end{prop}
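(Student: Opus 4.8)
The plan is to follow the same template as the two preceding propositions: first show that the general BN-curve $C$ of degree~$8$ and genus~$6$ lies on a cubic surface, so that $C\cap Q$ automatically lies on a curve of bidegree $(3,3)$; then establish the converse by a parameter count over cubic surfaces through a fixed $(3,3)$ curve; and conclude with a dimension count inside $\sym^{16}(Q)$. For the forward direction I would argue that, since $K_C(-3)$ has negative degree, $h^1(\oo_C(3)) = 0$, whence $h^0(\oo_C(3)) = 24 + 1 - 6 = 19 < 20 = h^0(\oo_{\pp^3}(3))$ and $C$ lies on a cubic surface $S$; moreover for $C$ general the restriction $H^0(\oo_{\pp^3}(2)) \to H^0(\oo_C(2))$ is injective ($10 \le 11$), so $C$ lies on no quadric, and since $C$ is also nondegenerate (a BN-curve of degree $8 < g + r = 9$ is not nonspecial, hence is nondegenerate) and irreducible of degree~$8$, it lies on no reducible cubic and $S$ is irreducible. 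Then $D := S \cap Q$ is a curve of bidegree $(3,3)$ on $Q \simeq \pp^1\times\pp^1$ (of arithmetic genus~$4$, and smooth for $S, Q$ general), and $C \cap Q = C \cap S \cap Q = C \cap D$ is a collection of $16$ points on $D$.

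For the converse I would fix a general $(3,3)$ curve $D$ (smooth of genus~$4$) together with a general collection $p_1,\dots,p_{16}$ of points on it. Because $D$ has genus~$4$, the degree-$4$ divisor class $p_1 + \cdots + p_{16} - 2H$ is represented by a unique reduced divisor $x_1 + x_2 + x_3 + x_4$; put $\ell_1 = \overline{x_1 x_2}$ and $\ell_2 = \overline{x_3 x_4}$, a disjoint pair of $2$-secant lines to $D$. Next I would show that the map from the space of triples $(S, \ell_1, \ell_2)$ --- with $S$ a smooth cubic surface containing $D$ and $\ell_1, \ell_2 \subset S$ a disjoint pair of lines --- to the space of pairs of disjoint $2$-secant lines to $D$ is dominant, the obstruction to smoothness lying in $H^1(N_S(-D - \ell_1 - \ell_2))$; after choosing an isomorphism $S \simeq \bl_\Gamma \pp^2$ with $\ell_1 = E_1$ and $\ell_2 = E_2$, this bundle is $3L - 2E_1 - 2E_2 - E_3 - E_4 - E_5 - E_6$, which has no higher cohomology by Kawamata--Viehweg, exactly as in Lemma~\ref{foo}. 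Hence some smooth cubic $S$ contains $D \cup \ell_1 \cup \ell_2$; completing $\{\ell_1,\ell_2\}$ to a set of six mutually disjoint lines realizes $S \simeq \bl_\Gamma \pp^2$ with $\ell_i = E_i$, and since $[\ell_i]\cdot[D]_S = [E_i]\cdot 2H = 2$ we have $E_1\cap D = \{x_1,x_2\}$ and $E_2\cap D = \{x_3,x_4\}$. Now set $\theta = 6L - E_1 - E_2 - 2E_3 - 2E_4 - 2E_5 - 2E_6 = 2H + E_1 + E_2$, so that $\theta|_D \sim 2H|_D + (x_1 + x_2) + (x_3 + x_4) \sim p_1 + \cdots + p_{16}$; one computes $\dim H^0(\oo_S(\theta)) = 14$ and $\dim H^0(\oo_S(\theta - D)) = \dim H^0(\oo_S(E_1 + E_2)) = 1$, so that $14 - 1 = 13 = 16 - 4 + 1$ and Lemma~\ref{pic-res-enough} applies, producing a curve $C \subset S$ of class $\theta$ meeting $D$ transversely at $p_1,\dots,p_{16}$. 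By Lemma~\ref{cubclass}, part~\ref{86}, a general such $C$ is smooth and irreducible of degree~$8$ and genus~$6$, hence a BN-curve by the results of \cite{keem}, and $C \cap Q = C \cap D = \{p_1,\dots,p_{16}\}$; this exhibits the general intersection as a general collection of $16$ points on a $(3,3)$ curve.

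Finally, to see non-generality I would note that the $(3,3)$ curves on $Q$ form a $15$-dimensional family ($\dim|\oo_Q(3,3)| = 15$), and a collection of $16$ points on a fixed one moves in a $16$-dimensional family, so the intersections $C \cap Q$ sweep out a locus of dimension at most $31$ inside $\sym^{16}(Q)$, which has dimension $32$; hence $C \cap Q$ is not a general collection of $16$ points. I expect the converse to be the main obstacle --- specifically, the dominance of the cubic-surface parameter map (which reduces to the Kawamata--Viehweg vanishing for $3L - 2E_1 - 2E_2 - E_3 - E_4 - E_5 - E_6$ already recorded in Lemma~\ref{foo}) together with the numerical identity $\dim H^0(\oo_S(\theta)) - \dim H^0(\oo_S(\theta - D)) = 13$, which is exactly what makes Lemma~\ref{pic-res-enough} applicable with no slack; everything else is bookkeeping analogous to the $(7,5)$ case, with the simplification that here the relevant residual class has degree $4 = g(D)$, so no effectivity condition survives.
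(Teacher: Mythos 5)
Your proposal is correct and follows essentially the same route as the paper: the $19 < 20$ count puts $C$ on a cubic surface, the converse realizes $16$ general points on a $(3,3)$ curve $D$ with $p_1+\cdots+p_{16}-2H \sim x_1+x_2+x_3+x_4$ as $D \cap C$ for $C$ of class $2H+E_1+E_2$ on a cubic through $D$ and the two secant lines (via the Kawamata--Viehweg vanishing of $H^1(3L-2E_1-2E_2-E_3-\cdots-E_6)$ and Lemma~\ref{pic-res-enough} with $14-1=13$), and Lemma~\ref{cubclass} plus \cite{keem} identifies $C$ as a BN-curve. The only cosmetic differences are your closing dimension count ($15+16=31<32$) where the paper instead notes that $16$ general points impose independent conditions on the $16$-dimensional space of $(3,3)$ curves, and your unnecessary (and not actually justified by ``$10\le 11$'') aside about $C$ not lying on a quadric, which plays no role in the argument.
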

\begin{proof}
Since $\dim H^0(\oo_C(3)) = 19$ and $\dim H^0(\oo_{\pp^3}(3)) = 20$,
we conclude that $C$ lies a cubic surface. Restricting this cubic
to $Q$, we see that $C \cap Q$ lies on a curve of bidegree $(3,3)$.

Conversely, take a general collection $p_1, \ldots, p_{16}$ of $16$ points on a curve
$D$ of bidegree $(3,3)$. The divisor $p_1 + \cdots + p_{16} - 2H$ is of degree $4$
on a curve $D$ of genus $4$; it is therefore effective, say
\[p_1 + \cdots + p_{16} - 2H \sim x + y + z + w.\]
We then claim there is a smooth cubic containing
$D$ and the general $2$-secant lines $\overline{xy}$ and $\overline{zw}$ to $D$.
Equivalently, we want to show the map
\[\{(S, E_1, E_2) : E_1, E_2 \subset S \supset D\} \mapsto \{(E_1, E_2)\},\]
from the space of smooth cubic surfaces $S$ containing $D$ with a choice
of pair of disjoint lines $(E_1, E_2)$,
to the space of pairs of $2$-secant lines to $D$, is dominant;
for this it suffices to check the vanishing of
$H^1(N_S(-D-E_1 - E_2))$.
But $N_S(-D-E_1 - E_2) = 3L - 2E_1 - 2E_2 - E_3 - E_4 - E_5 - E_6$,
which has no higher cohomology by Kawamata-Viehweg vanishing.

We now look for a curve $C \subset S$ of class
\[[C] = 6L - E_1 - E_2 - 2E_3 - 2E_4 - 2E_5 - 2E_6,\]
which is of degree $8$ and genus $6$.
By Lemma~\ref{cubclass}, we conclude that $C$ is smooth and irreducible;
by results of \cite{keem}, this implies the general curve of this class is a BN-curve.
Note that
\[\dim H^0(\oo_S(6L - E_1 - E_2 - 2E_3 - 2E_4 - 2E_5 - 2E_6)) = 14 \quad \text{and} \quad \dim H^0(\oo_S(E_1 + E_2)) = 1.\]
Applying Lemma~\ref{pic-res-enough},
we conclude that some curve of our given class meets $D$ transversely
at $p_1, p_2, \ldots, p_{16}$, as desired.

Since general points impose independent conditions on the $16$-dimensional
space of curves of bidegree $(3, 3)$, a general collection of $16$ points
will not lie any curve of bidegree $(3,3)$. Our collection of points
is therefore not general.
\end{proof}

\begin{prop}
Let $C \subset \pp^4$ be a general BN-curve of degree $9$ and genus $6$.
Then the intersection $C \cap H$ is a general collection of $9$ points
on an elliptic normal curve
in $H \simeq \pp^3$. In particular,
it is not a collection of $9$ general points.
\end{prop}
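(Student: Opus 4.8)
The plan is to follow the template of the other ``curves on surfaces'' propositions in this subsection. The key geometric fact is that a general BN-curve $C \subset \pp^4$ of degree $9$ and genus $6$ lies on a smooth quartic Del Pezzo surface $S \subset \pp^4$ --- i.e.\ a smooth complete intersection of two quadrics --- and that the hyperplane section $E = S \cap H$ of such an $S$ is an elliptic normal curve in $H \simeq \pp^3$: it is a smooth elliptic quartic, and it is linearly normal since $\oo_E(1) \simeq \oo_E(-K_S)$ has $h^0 = 4$. Since $C \subset S$, this gives $C \cap H = C \cap (S \cap H) = C \cap E \subseteq E$, which already yields the forward implication once the containment $C \subset S$ is established.

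\emph{That the general such $C$ lies on a quartic Del Pezzo.} By the second case of Lemma~\ref{qclass}, a general curve of class $[C] = 6L - E_1 - 2E_2 - 2E_3 - 2E_4 - 2E_5$ on a general $S \simeq \bl_{\{q_1, \ldots, q_5\}} \pp^2 \subset \pp^4$ is smooth and irreducible; computing with the intersection form, $\deg C = [C] \cdot (-K_S) = 9$ and $p_a(C) = 6$, so $C$ is a BN-curve by \cite{iliev}. Since the general such curve is a BN-curve and these form a dense subset of an irreducible family, it then suffices to check $H^1(N_S(-C)) = 0$ to conclude that the general BN-curve of this degree and genus lies on such an $S$. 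But $N_S \simeq \oo_S(2)^{\oplus 2}$ and $\oo_S(2) = -2K_S$, so $\oo_S(2)(-C) \simeq \oo_S(-E_1)$; hence $N_S(-C) \simeq \oo_S(-E_1)^{\oplus 2}$, whose higher cohomology vanishes via $0 \to \oo_S(-E_1) \to \oo_S \to \oo_{E_1} \to 0$. Taking $H$ general, $E = S \cap H$ is then a smooth elliptic normal curve containing $C \cap H$.

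\emph{Conversely,} let $p_1, \ldots, p_9$ be a general collection of points on an elliptic normal curve $E \subset H \simeq \pp^3$. Writing $[C] = 2H + E_1$ as a class on $S = \bl_{\{q_1, \ldots, q_5\}}\pp^2$, note $[C]|_E \sim \oo_E(2) + (E_1 \cap E)$ and $E_1 \cdot E = E_1 \cdot (-K_S) = 1$, so we must arrange that the line $E_1 \subset S$ meets $E$ at the unique point $x$ with $\oo_E(x) \simeq \oo_E(p_1 + \cdots + p_9)(-2)$ (a degree-one class on the genus-one curve $E$, hence with a unique effective representative). First I would produce a smooth quartic Del Pezzo $S \subset \pp^4$ containing $E$ together with a general line through $x$: such surfaces exist (extend a pencil of quadrics cutting out $E$ in $H$ to quadrics on $\pp^4$), and the map from $\{(S, L) : L \subset S \supset E\}$ to the space of $1$-secant lines to $E$ is dominant because the obstruction $H^1(N_S(-E - L)) \simeq H^1(\oo_S(-K_S - E_1))^{\oplus 2}$ vanishes --- here $-K_S - E_1 = 3L - 2E_1 - E_2 - E_3 - E_4 - E_5$ has no higher cohomology by Kawamata-Viehweg vanishing. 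Choosing the blow-down structure $S \simeq \bl_{\{q_1, \ldots, q_5\}}\pp^2$ so that $E_1 = L$, I would then seek $C \subset S$ of class $[C] = 6L - E_1 - 2E_2 - 2E_3 - 2E_4 - 2E_5$; by Lemma~\ref{qclass} the general such curve is smooth and irreducible of degree $9$ and genus $6$, hence a BN-curve by \cite{iliev}. Since $[C]|_E \sim \oo_E(2) + x \sim p_1 + \cdots + p_9$ and a direct count gives $\dim H^0(\oo_S([C])) - \dim H^0(\oo_S([C] + K_S)) = 15 - 6 = 9$, which equals $n - g(E) + 1 = 9 - 1 + 1$, Lemma~\ref{pic-res-enough} produces a curve $C$ of class $[C]$ meeting $E$ transversely at $p_1, \ldots, p_9$; as $\oo_S([C])$ is very ample, this $C$ is smooth for general $p_i$, hence a BN-curve, and $C \cap H = C \cap E = \{p_1, \ldots, p_9\}$ since both sides have length $\deg C = 9$. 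Thus the collections $C \cap H$ obtained from general BN-curves are precisely the general collections of $9$ points on an elliptic normal curve.

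\emph{Non-generality.} Finally, the elliptic normal curves in $\pp^3$ --- the smooth complete intersections of two quadrics --- form a family of dimension $\dim \mathbb{G}(2, H^0(\oo_{\pp^3}(2))) = \dim \mathbb{G}(2, 10) = 16$, so configurations of $9$ points lying on such a curve form a family of dimension at most $16 + 9 = 25 < 27 = \dim \sym^9(\pp^3)$; hence $C \cap H$ is not a general collection of $9$ points. The step I expect to be the main obstacle is the converse: specifically, the $H^1(N_S(-E-L)) = 0$ dominance argument needed to find a smooth quartic Del Pezzo through $E$ carrying a line through the prescribed point $x$, together with the (tight) numerical hypothesis of Lemma~\ref{pic-res-enough}; the forward direction and the dimension count are routine.
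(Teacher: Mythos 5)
Your proposal is correct and follows essentially the same route as the paper: the heart of the argument --- locating the unique point $x$ with $\oo_E(x) \simeq \oo_E(p_1 + \cdots + p_9)(-2)$, producing a quartic Del Pezzo through $E$ and a $1$-secant line via the vanishing of $H^1(N_S(-E-L)) \simeq H^1(\oo_S(3L - 2E_1 - E_2 - \cdots - E_5))^{\oplus 2}$, and applying Lemma~\ref{pic-res-enough} to the class $2H + E_1$ with the count $15 - 6 = 9$ --- is identical to the paper's. The only (harmless) deviations are cosmetic: the paper gets the forward direction more cheaply from $\dim H^0(\oo_C(2)) = 13 < 15$, and concludes non-generality by citing Corollary~1.4 of \cite{firstpaper} rather than your (also valid) $16 + 9 = 25 < 27$ dimension count.
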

\begin{proof}
Since $\dim H^0(\oo_C(2)) = 13$ and $\dim H^0(\oo_{\pp^4}(2)) = 15$,
we conclude that $C$ lies on the intersection of two quadrics.
Restricting these quadrics to $H \simeq \pp^3$,
we see that $C \cap H$ lies on the intersection of two quadrics,
which is an elliptic normal curve.

Conversely, let $p_1, p_2, \ldots, p_9$ be a collection of $9$ points
lying on an elliptic normal curve $D \subset \pp^3$.
Since $D$ is an elliptic curve, there exists (a unique) $x \in D$
with
\[\oo_D(p_1 + \cdots + p_9)(-2) \simeq \oo_D(x).\]
Let $M$ be a general line through $x$.
We then claim there is a quartic Del Pezzo surface containing
$D$ and the general $1$-secant line $M$.
Equivalently, we want to show the map
\[\{(S, E_1) : E_1 \subset S \supset D\} \mapsto \{E_1\},\]
from the space of smooth Del Pezzo surfaces $S$ containing $D$ with a choice
of line $E_1$,
to the space of $1$-secant lines to $D$, is dominant;
for this it suffices to check the vanishing of
$H^1(N_S(-D-E_1))$.
But $N_S(-D-E_1)$ is a direct sum of two copies of the line bundle
$3L - 2E_1 - E_2 - E_3 - E_4 - E_5$,
which has no higher cohomology by Kodaira vanishing.

We now consider curves $C \subset S$ of class
\[[C] = 6L - E_1 - 2E_2 - 2E_3 - 2E_4 - 2E_5,\]
which are of degree $9$ and genus $6$.
By Lemma~\ref{qclass}, we conclude that $C$ is smooth and irreducible;
by results of \cite{iliev}, this implies the general curve of this class is a BN-curve.
Note that
\[\dim H^0(\oo_S(6L - E_1 - 2E_2 - 2E_3 - 2E_4 - 2E_5)) = 15 \quad \text{and} \quad \dim H^0(\oo_S(3L - E_2 - E_3 - E_4 - E_5)) = 6.\]
Applying Lemma~\ref{pic-res-enough},
we conclude that some curve of our given class meets $D$ transversely
at $p_1, p_2, \ldots, p_9$, as desired.

By Corollary~1.4 of \cite{firstpaper}, there does not exist an elliptic
normal curve in $\pp^3$ passing through $9$ general points.
\end{proof}

\subsection{The Final Two Exceptional Cases}

We have exactly two remaining exceptional cases: The intersection
of a general BN-curve of degree $6$ and genus $2$ in $\pp^3$ with a quadric,
and the intersection of a general BN-curve of degree $10$ and genus $7$ in $\pp^4$
with a hyperplane. We will show in the first case that the intersection fails
to be general since $C$ is the projection of a curve $\tilde{C} \subset \pp^4$,
where $\tilde{C}$ lies on a surface of small degree (a cubic scroll).
In the second case, the intersection fails to be general since $C$
is contained in a quadric hypersurface.

\begin{prop}
Let $C \subset \pp^3$ be a general BN-curve of degree $6$ and genus $2$.
Then the intersection $C \cap Q$ is a collection of $12$ points
lying on a curve $D \subset Q \simeq \pp^1 \times \pp^1$, which is general subject
to the following conditions:
\begin{enumerate}
\item The curve $D$ is of bidegree $(3, 3)$ (and so is in particular of arithmetic genus $4$).
\item The curve $D$ has two nodes (and so is in particular of geometric genus $2$).
\item The divisors $\oo_D(2,2)$ and $C \cap D$ are linearly equivalent
when pulled back to the normalization of $D$.
\end{enumerate}
In particular, it is not a collection of $12$ general points.
\end{prop}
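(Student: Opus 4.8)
The plan is to realize the curve as a projection from $\pp^4$ of a curve lying on a cubic scroll, and to transport the whole computation to that scroll, where it becomes a single coincidence of line bundles. \textbf{Setup.} Since $d=6\ge g+3$, the bundle $\oo_C(1)$ is nonspecial of dimension $h^0(\oo_C(1))=5$, so $C\subset\pp^3$ is the image under linear projection $\pi_p$ from a point $p\in\pp^4$ of a linearly normal curve $\tilde C\subset\pp^4$ of degree $6$ and genus $2$; for $C$ general, $\tilde C$ and $p$ are general, $\tilde C$ is smooth, and $\pi_p|_{\tilde C}$ is an isomorphism onto $C$. Now $(d,g)=(6,2)$ is extremal for Castelnuovo's bound in $\pp^4$ (one has $\pi(6,4)=2$), so by Castelnuovo's lemma $\tilde C$ lies on a surface of minimal degree; for $\tilde C$ general this is a smooth cubic scroll $S\simeq\bl_x\pp^2$ embedded by $|2L-E|$, so $\oo_S(1)=2L-E$. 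Reading off the class of $\tilde C$ on $S$ from its degree and genus gives $\tilde C\sim 4L-2E$, i.e. $\tilde C\sim\oo_S(2)$ — this single identity drives everything below.

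\textbf{Forward direction.} Let $\bar S:=\pi_p(S)\subset\pp^3$; it is a cubic surface, ruled by lines (the images of the ruling of $S$), and it is classical that the general projection of the cubic scroll is a ruled cubic surface singular along a line $\ell_0$ (with transverse sheets away from finitely many pinch points), with $\pi_p|_S\colon S\to\bar S$ the normalization. Then $D:=\bar S\cap Q$ is a curve of bidegree $(3,3)$ on $Q\simeq\pp^1\times\pp^1$ which, for $Q$ general, is singular exactly at the two points of $\ell_0\cap Q$, with ordinary nodes there — giving conditions (1) and (2), and geometric genus $2$. Since $C=\pi_p(\tilde C)\subset\bar S$ we get $C\cap Q\subset D$, a set of $12$ points. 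For (3), pull everything back along $\pi_p|_S$: writing $\hat Q\subset\pp^4$ for the quadric cone over $Q$ with vertex $p$, the preimage $\hat D:=S\cap\hat Q$ lies in $|\oo_S(2)|$, is a smooth curve of genus $2$, and is the normalization $\tilde D$ of $D$; moreover $C\cap Q=\tilde C\cap\hat Q=\tilde C\cap\hat D$. As divisors on $\tilde D=\hat D$ we therefore have $C\cap Q\sim\oo_S(2)|_{\hat D}$ (because $\tilde C\sim\oo_S(2)$), while the pullback of $\oo_D(2,2)=\oo_{\pp^3}(2)|_D$ is also $\oo_S(2)|_{\hat D}$; this is (3).

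\textbf{Converse.} Given general $p_1,\dots,p_{12}$ on a $(3,3)$ curve $D'\subset Q$ of geometric genus $2$ with $\oo_{D'}(2,2)\sim\sum p_i$ on its normalization $\tilde D'$: the bundle $\oo_{D'}(1)$ has degree $6\ge 2g+1$, hence is very ample and embeds $\tilde D'$ as a nondegenerate degree-$6$, genus-$2$ curve in $\pp^4$, which by Castelnuovo's lemma again lies on a smooth cubic scroll $S$, with $D'=\pi_p(\tilde D')$ for the unique $p$ recovering the subseries $H^0(\oo_{\pp^3}(1))\subset H^0(\oo_{D'}(1))$. Inside $S$, $\oo_S(2)$ restricts to $\oo_{\pp^4}(2)|_{\tilde D'}=\oo_{D'}(2,2)\sim\tilde p_1+\dots+\tilde p_{12}$ (preimages of the $p_i$), and since $\tilde D'\sim\oo_S(2)$ one computes $h^0(\oo_S(2))-h^0(\oo_S(2)(-\tilde D'))=12-1=11=12-g+1$; so Lemma~\ref{pic-res-enough} yields a curve $\tilde C\subset S$ of class $\oo_S(2)$ meeting $\tilde D'$ transversely at the $\tilde p_i$. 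As $\oo_S(2)$ is very ample, $\tilde C$ is smooth of degree $6$ and genus $2$, hence a BN-curve by the irreducibility results of \cite{iliev}, and $C:=\pi_p(\tilde C)$ is then a general BN-curve of degree $6$ and genus $2$ with $C\cap Q=\tilde C\cap(S\cap\hat Q)=\tilde C\cap\tilde D'=\{p_1,\dots,p_{12}\}$. Since these configurations sweep out an irreducible family, $C\cap Q$ for general $C$ is general subject to (1)--(3). For non-generality, note that $(3,3)$ curves with two nodes form a family of dimension $\le 13$ inside $|\oo_Q(3,3)|=\pp^{15}$, and on each such curve the divisors $\sum p_i\sim\oo_{D'}(2,2)|_{\tilde D'}$ form the single linear system $|\oo_{\tilde D'}(2,2)|$ of dimension $10$; hence such configurations fill out a subvariety of $\sym^{12}(Q)$ of dimension $\le 23<24=\dim\sym^{12}(Q)$, so $C\cap Q$ is not general.

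\textbf{Main obstacle.} The numerical input for Lemma~\ref{pic-res-enough} is forced by the coincidence $\tilde C\sim\tilde D'\sim\oo_S(2)$ and so costs nothing; the delicate points are all in the geometry of the projection and the converse: justifying that the relevant general extremal curves in $\pp^4$ lie on a \emph{smooth} cubic scroll (the two appeals to Castelnuovo's lemma), identifying $S\cap\hat Q$ with the normalization of $\bar S\cap Q$ for a general projection, pinning down the singularity type of $\bar S$ precisely enough to see $D$ has exactly two nodes, and checking that the $\tilde C$ produced on $S$ — hence $C$ in $\pp^3$ — is genuinely a general BN-curve. I expect verifying the structure of $\bar S$ and the normalization identification to be the part requiring the most care.
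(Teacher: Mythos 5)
Your proof follows essentially the same route as the paper's: pass to the linearly normal model $\tilde C\subset\pp^4$, put it on a cubic scroll $S$, compute $[\tilde C]=2H=\oo_S(2)$, deduce (1)--(3) by pulling everything back to $S\cap\hat Q$, run the converse through Lemma~\ref{pic-res-enough} with $h^0(\oo_S(2H))=12$ and $h^0(\oo_S)=1$, and close with the dimension count $13+10=23<24$. All of the numerology matches the paper's.

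The one step where your justification does not hold up as stated is the appeal to Castelnuovo's lemma to put $\tilde C$ (and, in the converse, $\tilde D'$) on a cubic scroll. While $(d,g)=(6,2)$ is indeed extremal for Castelnuovo's bound in $\pp^4$, the lemma that forces the hyperplane section of an extremal curve onto a rational normal curve (and hence the curve onto a minimal-degree surface) requires $d\geq 2(r-1)+3=9$ points in the hyperplane section; with only $6$ points in $\pp^3$ the condition of lying on a twisted cubic is automatic and the argument gives nothing. The conclusion is still true, but the correct mechanism is the hyperelliptic $g^1_2$: each divisor of $|K_{\tilde C}|$ spans a line (since $h^0(\oo_{\tilde C}(1)\otimes K_{\tilde C}^\vee)=3=h^0(\oo_{\tilde C}(1))-2$), and these lines sweep out a ruled cubic surface --- this is exactly how the paper produces $S$ (via Corollary~13.3 of \cite{firstpaper}), in both the forward and converse directions. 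A second, smaller point: at the end of the converse you need $C=\pi_p(\tilde C)\subset\pp^3$ to be a (general) BN-curve, so the relevant irreducibility input is for degree-$6$ genus-$2$ curves in $\pp^3$ (i.e.\ \cite{keem}, or an argument that projecting the general BN-curve in $\pp^4$ from a general point yields the general BN-curve in $\pp^3$), not \cite{iliev}, which concerns $\pp^4$.
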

\begin{proof}
We first observe that $\dim H^0(\oo_C(1)) = 5$, so $C$ is the projection from a point $p \in \pp^4$
of a curve $\tilde{C} \subset \pp^4$ of degree $6$ and genus $2$.
Write $\pi \colon \pp^4 \dashedrightarrow \pp^3$ for the map of projection
from $p$, and define the quadric hypersurface $\tilde{Q} = \pi^{-1}(Q)$.

Let $S \subset \pp^4$ be the surface swept out by joining pairs
of points on $\tilde{C}$ conjugate under the hyperelliptic involution.
By Corollary~13.3 of \cite{firstpaper}, $S$ is a cubic surface;
in particular, since $S$ has a ruling, $S$ is a cubic scroll.
Write $H$ for the hyperplane section on $S$, and $F$ for the class
of a line of the ruling.

Th curve $\tilde{D} = \tilde{Q} \cap S$
(which for $C$ general is smooth by Kleiman transversality), is of degree $6$ and genus $2$.
By construction, the intersection $C \cap Q$ lies on $D = \pi(\tilde{D})$. Since $D = \pi(S) \cap Q$,
it is evidently a curve of bidegree $(3, 3)$ on $Q \simeq \pp^1 \times \pp^1$.
Moreover, since $\tilde{D}$ has genus $2$, the geometric genus of $D$ is $2$.
In particular, $D$ has two nodes.

Next, we note that on $S$, the curve $\tilde{C}$ has class $2H$. Indeed, if $[\tilde{C}] = a \cdot H + b \cdot F$,
then $a = \tilde{C} \cdot F = 2$ and $3a + b = \tilde{C} \cdot H = 6$; solving for $a$ and $b$, we obtain
$a = 2$ and $b = 0$.
Consequently, $\tilde{C} \cap \tilde{D}$ has class $2H$ on $\tilde{D}$.
Or equivalently, $C \cap D = \pi(\tilde{C} \cap \tilde{D})$ has class
equal to $\oo_D(2) = \oo_D(2,2)$ when pulled back to the normalization.

Conversely, take $12$ points on $D$ satisfying our assumptions. Write
$\tilde{D}$ for the normalization of $D$, and $p_1, p_2, \ldots, p_{12}$
for the preimages of our points in $\tilde{D}$.
We begin by noting that $\dim H^0(\oo_{\tilde{D}}(1)) = 5$,
so $D$ is the projection from a point $p \in \pp^4$
of $\tilde{D} \subset \pp^4$ of degree $6$ and genus $2$.
As before, write $\pi \colon \pp^4 \dashedrightarrow \pp^3$ for the map of projection
from $p$, and define the quadric hypersurface $\tilde{Q} = \pi^{-1}(Q)$.

Again, we let $S \subset \pp^4$ be the surface swept out by joining pairs
of points on $\tilde{D}$ conjugate under the hyperelliptic involution.
As before, $S$ is a cubic scroll;
write $H$ for the hyperplane section on $S$, and $F$ for the class
of a line of the ruling.
Note that $\tilde{D} \subseteq \tilde{Q} \cap S$; and since both
sides are curves of degree $6$, we have $\tilde{D} = \tilde{Q} \cap S$.

It now suffices to find a curve $\tilde{C} \subset S$ of class $2H$,
meeting $\tilde{D}$ transversely 
in $p_1, \ldots, p_{12}$. 
For this, note that
\[\dim H^0(\oo_S(2H)) = 12 \quad \text{and} \quad \dim H^0(\oo_S) = 1.\]
Applying Lemma~\ref{pic-res-enough} yields the desired conclusion.

It remains to see from this description that
$C \cap Q$ is not a general collection of $12$ points.
For this, we first note that such a curve $D \subset \pp^1 \times \pp^1$
is the same as specifying an abstract curve of genus $2$, two lines bundles
of degree $3$ (corresponding to the pullbacks of $\oo_{\pp^1}(1)$ from each factor),
and a basis-up-to-scaling for their space of sections (giving us two maps $D \to \pp^1$).
Since there is a $3$-dimensional moduli space of abstract curves $D$ of genus $2$,
and $\dim \pic^3(D) = 2$, and there is a $3$-dimensional family of bases-up-to-scaling
of a $2$-dimensional vector space, the dimension of the space
of such curves $D$ is $3 + 2 + 2 + 3 + 3 = 13$.
Our condition $p_1 + \cdots + p_{12} \sim 2H$ then implies
collections of such points on a fixed $D$ are in bijection with
elements of $\pp \oo_D(2H) \simeq \pp^{10}$. Putting this together,
there is an (at most) $13 + 10 = 23$ dimensional family of such collections of points.
But $\sym^{12}(Q)$ has dimension $24$. In particular, collections of such 
points cannot be general.
\end{proof}

\begin{prop}
Let $C \subset \pp^4$ be a general BN-curve of degree $10$ and genus $7$.
Then the intersection $C \cap H$ is a general collection of $10$ points
on a quadric in $H \simeq \pp^3$. In particular,
it is not a collection of $10$ general points.
\end{prop}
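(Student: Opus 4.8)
The plan is to prove the two inclusions implicit in the statement --- that $C \cap H$ lies on a quadric surface, and conversely that a general collection of $10$ points on a quadric surface occurs as such an intersection --- and then to deduce non-generality by a parameter count. For the forward inclusion, note that $\deg \oo_C(2) = 20 > 2g - 2 = 12$, so $\oo_C(2)$ is nonspecial and $\dim H^0(\oo_C(2)) = 20 - 7 + 1 = 14$; since $\dim H^0(\oo_{\pp^4}(2)) = 15 > 14$, the curve $C$ lies on a quadric hypersurface $\tilde Q \subset \pp^4$. As $C$ is irreducible and nondegenerate, $\tilde Q$ does not contain $H$, so $\Sigma := \tilde Q \cap H$ is a quadric surface in $H \simeq \pp^3$ --- smooth for $C$ and $H$ general --- and $C \cap H \subset \Sigma$.

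For the converse I would realize $C$ on an auxiliary surface. The relevant observation is that a general BN-curve of degree $10$ and genus $7$ in $\pp^4$ lies on a surface $S = \tilde Q \cap T$ cut out by $\tilde Q$ and a cubic $T$, since $\dim H^0(I_C(3)) \ge 35 - 24 = 11$ exceeds the $5$-dimensional space of cubics divisible by the equation of $\tilde Q$; this $S$ is a complete intersection of type $(2,3)$, hence a $K3$ surface, whose hyperplane section is a $(3,3)$-curve on a quadric surface. So, given a general collection $p_1, \dots, p_{10}$ on a smooth quadric surface $\Sigma \subset H$, I would first pick a $(3,3)$-curve $D \subset \Sigma$ through the $p_i$, which is possible since $\dim |\oo_\Sigma(3,3)| = 15 \ge 10$; such a $D$ has genus $4$ with $\oo_D(1) \simeq K_D$, so the degree-$4$ class $p_1 + \dots + p_{10} - K_D$ is effective, say linearly equivalent to $z_1 + z_2 + z_3 + z_4$. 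Next I would show --- by the vanishing of $H^1$ of a suitable twist of $N_S \simeq \oo_S(2) \oplus \oo_S(3)$, in the spirit of the "there exists a cubic surface through $D$ and a prescribed line" arguments above --- that some smooth complete intersection $S \subset \pp^4$ of type $(2,3)$ contains $D$ as its hyperplane section together with a rational curve $R \subset S$ of degree $4$ meeting $D$ exactly at $z_1, \dots, z_4$. Setting $\theta = \oo_S(1) + [R] \in \pic S$, one has $\theta|_D \sim K_D + (z_1 + \dots + z_4) \sim p_1 + \dots + p_{10}$, together with $\theta^2 = 12$, $\theta \cdot \oo_S(1) = 10$, and $\dim H^0(\theta) - \dim H^0(\theta(-D)) = 8 - 1 = 7 \ge 10 - 4 + 1$; Lemma~\ref{pic-res-enough} then produces a curve $C \subset S$ of class $\theta$ meeting $D$ transversely at $p_1, \dots, p_{10}$. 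A general such $C$ is a smooth, irreducible, nondegenerate curve of degree $10$ and genus $7$ in $\pp^4$, hence a BN-curve by the irreducibility of the relevant Hilbert scheme (the result of \cite{iliev} for $r = 4$), and since these $C$ occur for $p_1, \dots, p_{10}$ general, a general BN-curve has $C \cap H$ a general collection of $10$ points on a quadric surface.

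Finally, for non-generality I would observe that the $10$-point collections of $\pp^3$ lying on some quadric surface form the image of the incidence variety of pairs (quadric surface in $\pp^3$, $10$ points on it), of dimension at most $9 + 20 = 29 < 30 = \dim \sym^{10}(\pp^3)$; hence $C \cap H$, lying in this locus, cannot be a general collection of $10$ points of $\pp^3$.

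The hard part will be the converse, and within it the existence of the surface $S$: one needs a complete intersection of type $(2,3)$ through $D$ whose Picard group is large enough to contain the extra class $[R]$ --- such an $S$ is necessarily special in the sense of Noether-Lefschetz --- together with the corresponding $H^1$-vanishing. This is the $K3$ analogue of the del Pezzo bookkeeping in the earlier converse propositions, and is heavier; I would try to carry it out by degenerating $R$ to a chain of four lines on $S$ and adjoining them one at a time, reducing each step to a Kodaira- or Kawamata-Viehweg-type vanishing.
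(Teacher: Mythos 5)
Your proposal follows essentially the same route as the paper: the same $h^0$ count for the forward inclusion, and for the converse the same construction of a $(2,3)$ complete intersection (sextic K3) surface $S$ containing the $(3,3)$-curve $D$ together with a degree-$4$ rational curve $R$ through the four residual points, followed by Lemma~\ref{pic-res-enough} applied to the class $H + R$. The one step you defer --- the existence of such an $S$ --- is handled in the paper not by degenerating $R$ to a chain of lines but by citing \cite{knutsen} for the existence of a sextic K3 containing a rational normal quartic and then checking $H^1(N_S(-H-R)) = 0$ directly via the splitting $N_S(-H-R) \simeq \oo_S(H-R) \oplus \oo_S(2H-R)$ and the projective normality of $R$; the paper also verifies that $H + R$ is basepoint-free so that Bertini yields a smooth irreducible $C$.
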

\begin{proof}
Since $\dim H^0(\oo_C(2)) = 14$ and $\dim H^0(\oo_{\pp^4}(2)) = 15$,
we conclude that $C$ lies on a quadric.
Restricting this quadric to $H \simeq \pp^3$,
we see that $C \cap H$ lies on a quadric.

For the converse, we take general points $p_1, \ldots, p_{10}$
lying on a general (thus smooth) quadric~$Q$.
Since $\dim H^0(\oo_Q(3,3)) = 16$, we may find a curve $D \subset Q$
of type $(3,3)$ passing through $p_1, \ldots, p_{10}$.
As divisor classes on $D$, suppose that
\[p_1 + p_2 + \cdots + p_{10} - H \sim x + y + z + w.\]
We now pick a general (quartic) rational normal curve $R \subset \pp^4$
whose hyperplane section is $\{x, y, z, w\}$.

We then claim there is a smooth sextic K3 surface $S \subset \pp^4$
containing $D$ and the general $2$-secant lines $\overline{xy}$ and $\overline{zw}$ to $D$.
Equivalently, we want to show the map
\[\{(S, R) : R \subset S\} \mapsto \{(R, D)\},\]
from the space of smooth sextic K3 surfaces $S$,
to the space of pairs $(R, D)$ where $R$ is a rational normal curve
meeting the canonical curve $D = S \cap H$ in four points, is dominant;
for this it suffices to check the vanishing of
$H^1(N_S(-H-R))$ at any smooth sextic K3 containing a rational normal curve $R$
(where $H = [D]$ is the hyperplane class on $S$).
We first note that a sextic K3 surface $S$ containing a rational normal curve $R$
exists, by Theorem~1.1 of~\cite{knutsen}.
On this K3 surface, our vector bundle $N_S(-H-R)$ is the direct sum of the line bundles $H - R$ and $2H - R$;
consequently, it suffices to show $H^1(\oo_S(n)(-R)) = 0$ for $n \geq 1$.
For this we use the exact sequence
\[0 \to \oo_S(n)(-R) \to \oo_S(n) \to \oo_S(n)|_R = \oo_R(n) \to 0,\]
and note that $H^1(\oo_S(n)) = 0$ by Kodaira vanishing,
while $H^0(\oo_S(n)) \to H^0(\oo_R(n))$ is surjective since $R$ is projectively normal.
This shows the existence of the desired K3 surface $S$ containing
$D$ and the general $4$-secant rational normal curve $R$.

Next, we claim that the linear series $H + R$ on $S$ is basepoint-free.
To see this, we first note that $H$ is basepoint free, so any basepoints
must lie on the curve $R$. Now the short exact sequence of sheaves
\[0 \to \oo_S(H) \to \oo_S(H + R) \to \oo_S(H + R)|_R \to 0\]
gives a long exact sequence in cohomology
\[\cdots \to H^0(\oo_S(H + R)) \to H^0(\oo_S(H + R)|_R) \to H^1(\oo_S(H)) \to \cdots.\]

Since the complete linear series
attached to $\oo_S(H + R)|_R \simeq \oo_{\pp^1}(2)$ is basepoint-free,
it suffices to show that 
$H^0(\oo_S(H + R)) \to H^0(\oo_S(H + R)|_R)$ is surjective. For this,
it suffices to note that $H^1(\oo_S(H)) = 0$ by Kodaira vanishing.

Thus, $H + R$ is basepoint-free. In particular, the Bertini
theorem implies the general curve of class $H + R$ is smooth.
Such a curve is of degree~$10$ and genus~$7$;
in particular it is a BN-curve by results 
of \cite{iliev}.
So it suffices to find a curve of class $H + R$ on $S$
passing through $p_1, p_2, \ldots, p_{10}$.
By construction, as divisors on $D$, we have
\[p_1 + p_2 + \cdots + p_{10} \sim H + R.\]
By Lemma~\ref{pic-res-enough}, it suffices to show
$\dim H^0(\oo_S(H + R)) = 8$ and $\dim H^0(\oo_S(R)) = 1$.

More generally,
for any smooth curve $X \subset S$
of genus $g$,
we claim $\dim H^0(\oo_S(X)) = 1 + g$. To see this, we use the exact sequence
\[0 \to \oo_S \to \oo_S(X) \to \oo_S(X)|_X \to 0,\]
which gives rise to a long exact sequence in cohomology
\[0 \to H^0(\oo_S) \to H^0(\oo_S(X)) \to H^0(\oo_S(X)|_X) \to H^1(\oo_S) \to \cdots.\]
Because $H^1(\oo_S) = 0$, we thus have
\begin{align*}
\dim H^0(\oo_S(X)) &= \dim H^0(\oo_S(X)|_X) + \dim H^0(\oo_S) \\
&= \dim H^0(K_S(X)|_X) + 1 \\
&= \dim H^0(K_X) + 1 \\
&= g + 1.
\end{align*}
In particular, $\dim H^0(\oo_S(H + R)) = 8$ and $\dim H^0(\oo_S(R)) = 1$,
as desired.

Since general points impose independent conditions on the $10$-dimensional
space of quadrics, a general collection of $10$ points
will not lie on a quadric. In particular, our hyperplane
section here is not a general collection of $10$ points.
\end{proof}

\bibliography{iqbib}{}
\bibliographystyle{plain}

\end{document}